\pgfplotsset{compat=1.15}
\newcommand{\cu}{\kappa_{\partial\Omega,\eta}}
\newcommand{\til}{\widetilde{T}}
\newcommand{\N}{\mathbb{N}}
\newcommand{\C}{\mathbb{C}}
\newcommand{\R}{\mathbb{R}}
\newcommand{\pp}{\mathbf{P}}
\newcommand{\hh}{\mathcal{H}^1}
\newcommand{\ww}{\mathcal{W}}
\newcommand{\dd}{d}
\DeclareMathOperator{\re}{Re}
\DeclareMathOperator{\im}{Im}
\DeclareMathOperator{\tr}{trace}
\DeclareMathOperator{\erfc}{erfc}
\newtheorem{lemma}{Lemma}[section]
\newtheorem{theorem}[lemma]{Theorem}
\newtheorem{coro}[lemma]{Corollary}
\newtheorem{prop}[lemma]{Proposition}
\newtheorem{definition}[lemma]{Definition}
\theoremstyle{remark}
\newtheorem{rema}[lemma]{Remark}
\numberwithin{equation}{section}
\author{Felipe Marceca}
\address[F. Marceca]{Department of Mathematics \\  King’s College London \\ United Kingdom}
\email{felipe.marceca@kcl.ac.uk}
\author{Jos\'e Luis Romero}
\address[J. L. Romero]{Faculty of Mathematics \\
	University of Vienna \\
	Oskar-Morgenstern-Platz 1 \\
	A-1090 Vienna, Austria \\and
	Acoustics Research Institute\\ Austrian Academy of Sciences\\Dr. Ignaz Seipel-Platz 2,	AT-1010 Vienna, Austria}
\email{jose.luis.romero@univie.ac.at}
\title[Improved discrepancy for the planar Coulomb gas]{Improved discrepancy for the planar Coulomb gas at low temperatures}
\thanks{J. L. R. and F. M. gratefully acknowledge support from the Austrian Science Fund (FWF): 10.55776/Y1199. \\ F. M. was also supported by the EPSRC: NIA EP/V002449/1.}
\subjclass[2020]{60K35, 82B26, 31C20, 47A75, 94A20}
\keywords{Planar Coulomb gas; external potential; low temperature; freezing regime; discrepancy; erfc-kernel, Faddeeva kernel, plasma dispersion kernel, Toeplitz operator}
\begin{document}
\begin{abstract}
We study the planar Coulomb gas in the regime where the inverse temperature $\beta_n$ grows at least logarithmically with respect to the number of particles $n$ (freezing regime, $\beta_n\gtrsim \log n$). We show that, almost surely for large $n$, the discrepancy between the number of particles in any microscopic region and their expected value (given with adequate
precision by the equilibrium measure) is, up to log factors, of the order of the perimeter of the observation window. The estimates are valid throughout the whole droplet (the region where the particles accumulate), and are particularly interesting near the boundary, while in the bulk they offer technical improvements over known results.

Our work builds on recent results on equidistribution at low temperatures and improves on them by providing refined spectral asymptotics for certain Toeplitz operators on the range of the erfc-kernel (sometimes called Faddeeva or plasma dispersion kernel).
\end{abstract}

\maketitle

\section{Introduction}

We study the planar Coulomb gas with respect to an \emph{external potential} $Q$ and \emph{inverse temperature} $\beta>0$.
The potential is a lower semicontinuous function $Q:\C\to \R\cup\{+\infty\}$ that is finite on some set of positive capacity and satisfies
\begin{align}
	\label{qgro}
	\liminf_{z\to\infty} \frac{Q(z)}{2\log|z|}>1.
\end{align}
To a point configuration $\{z_j\}_{j=1}^n$ we associate the Hamiltonian
\begin{align}\label{eq_ham}
H_n=\sum_{j\neq k} \log \frac{1}{|z_j-z_k|} + n \sum_{j=1}^n Q(z_j),
\end{align}
and consider the Boltzmann-Gibbs probability measure on $\C^n$
\begin{align}\label{bg}
d \pp_n^\beta = \frac{1}{Z_n^\beta} e^{-\beta H_n} dA(z_1)\ldots dA(z_n),
\end{align}
where $Z_n^\beta$ is a normalizing constant and $dA(z_j)$ denotes the Lebesgue on $\C$ divided by $\pi$.

We think of a random configuration $\{z_j\}_{j=1}^n$ drawn from \eqref{bg} as consisting of identical repelling point charges, which are prevented from dispersion by the presence of the rescaled field $nQ$. In the formal zero temperature limit $\beta=\infty$ we let $\{z_j\}_{j=1}^n$ minimize $H_n$ and speak of a \emph{Fekete configuration}. We shall be interested in large but finite $\beta$.

\emph{The equilibrium measure} $\mu$ associated with $Q$ \cite{SaTo} is the unique compactly supported Borel probability measure minimizing the logarithmic $Q$-energy given by
\[I_Q(\nu)=\int_{\C^2} \log \frac{1}{|z-w|}d\nu(z)d\nu(w) + \int_\C Q(z)d\nu(z).\]
The support $S$ of $\mu$ is known as the \emph{droplet}. Provided that $Q$ is $C^2$ on a neighborhood of $S$, the equilibrium measure is
\begin{align}\label{eq_equil}
d\mu =\chi_S \Delta Q dA,
\end{align}
where $\Delta=1/4(\partial_{xx}+\partial_{yy})$ is one quarter of the standard Laplacian.

Point configurations $\{z_j\}_{j=1}^n$ drawn from $\pp_n^\beta$ tend to follow the equilibrium measure. To make this statement precise, for $\{z_j\}_{j=1}^n$ and $\Omega\subseteq \C$ we write $\#[\Omega]=\#(\Omega\cap \{z_j\}_{j=1}^n)$ and define the 1-point intensity $R_n^\beta$ at $z\in \C$ by
\begin{align}\label{eqa}
R_n^\beta(z)=\lim_{\varepsilon\to 0} \frac{\mathbf{E}_n^\beta\big(\#[B(z,\varepsilon)]\big)}{\varepsilon^2}.
\end{align}
The first intensity $R_n^\beta(z)$ describes the expected number of particles per unit area at $z$ and satisfies
\[\frac{1}{n}R_n^\beta dA \xrightarrow{n\to \infty} d\mu,\]
in the weak sense of measures, even for $n$-dependent inverse temperatures $\beta=\beta_n$ as soon as they are bounded below --- see \cite[Appendix A]{Am} and \cite{Jo, HeMa, MR1606719}.

We will be concerned with the \emph{low temperature regime}, where inverse temperatures grow with the number of particles $n$ as
\begin{align}\label{eq_lt}
\beta=\beta_n\ge c\log n,
\end{align}
with $c>0$ a constant. In this regime, introduced in \cite{amcmp} and further studied in \cite{amro20, ammaro}, particles are expected to follow $\mu$ in a much more rigid way than that indicated by the first order statistic \eqref{eqa}. We investigate this at the microscopic scale, of order $n^{-1/2}$.

We shall make the following assumptions on $Q$, which are essentially the same as in \cite{amro20}:
\begin{enumerate}[label=(\roman*)]
	\item \label{c1} $Q:\C\to \R\cup\{+\infty\}$ is lower semicontinuous, finite on some set of positive capacity 
	and satisfies \eqref{qgro};
	\item \label{c5} $Q$ is real-analytic in a neighborhood of $S$;
	\item \label{cau} $\Delta Q >0$ in a neighborhood of $S$;
	\item \label{c2} The boundary $\partial S$ has no singularities;
	\item \label{c4} $S^*=S$ where $S^*$ is the coincidence set for the obstacle problem associated with $Q$. Concretely, this means that for $z\in S^c$,
	\[Q(z)>\sup\{f(z): \ f\in \mathcal{F}_Q\},\]
	where $\mathcal{F}_Q$ denotes the class of subharmonic functions on $\C$ that are everywhere $\le Q$ and satisfy $f(z)\le \log |z|^2
	+ O(1)$ as $|z|\to\infty$.
	\item \label{c7} $S$ is connected.
\end{enumerate}
The assumptions make sense in light of Sakai's regularity theory \cite{MR1097025}. While Conditions \ref{c1}, \ref{c5} and \ref{cau} imply that $\partial S$ is composed of finitely many real-analytic simple curves with at most finitely many singular points, Condition \ref{c2} asserts that such singularities do not occur. (See \cite[Lemma 3.1]{AmCr} for Sakai's theory adapted to the current setting, and also \cite{AmKaMaWe,MR3454377, HeSh} for additional background.) In contrast, potentials that are merely $C^\infty$ could give rise to highly irregular droplets \cite[Theorem 11.1]{MR679313}. Condition \ref{c4} is somehow technical; in the parlance of \cite{HeMa} it means that $Q$ has no shallow points. Such assumption could be avoided by redefining the potential $Q$ to be $+\infty$ outside a small enough neighborhood of the droplet. We refer the reader to \cite{AmKaMaWe} for a discussion on these topics, and many literature references.

To formulate our results, we consider a sequence of inverse temperatures $\beta_n>0$
and a family of point configurations $\{z_j^{(n)}:1\le j\le n, n\in \N \}\subseteq \C$ drawn from the product measure $\mathbf{P}=\prod_{n\in\N }\mathbf{P}_n^{\beta_n}$. Almost sure events are also understood with respect to $\mathbf{P}$. To alleviate the notation, we write $z_j$ instead of $z_j^{(n)}$.

As in \cite{AmOC, amro20}, we investigate the microscopy statistics of the sample $\{z_j\}_{j=1}^n$ by means of a sequence $\{p_n\}_{n=1}^\infty \subset \mathbb{C}$ of \emph{zooming points}, with $p_n \to p_*$. Specifically,
\cite{amro20} investigates the Beurling-Landau densities
\begin{align}\label{eq_ber}
\limsup_{L \to \infty} \limsup_{n \to \infty} \frac{\#\left[B(p_n,L/\sqrt{n})\right]}{L^2},
\qquad
\liminf_{L \to \infty} \liminf_{n \to \infty} \frac{\#\left[B(p_n,L/\sqrt{n})\right]}{L^2},
\end{align}
and shows that under \eqref{eq_lt}, they approach the equilibrium density --- which is $\Delta Q(p_*)$ if $\{p_n\}_n$ zooms into the so-called bulk and $\tfrac12 \Delta Q(p_*)$ near the boundary. These statements hold almost surely and simultaneously for all suitable choices of zooming points \cite{amro20}. The goal of this article is to refine such statements, specially in the boundary case.  On the other hand, if \eqref{eq_lt} does not hold (for large $n$), then we do \emph{not} expect the Beurling-Landau densities \eqref{eq_ber} to be almost surely positive and finite --- and, indeed, in dimension 1 and for the classical Gaussian potential such intuition was confirmed in \cite{ammaro} by resorting to \cite{LeRi, HoVa, VaVi}.

\subsection{Discrepancies near the boundary}
In the low temperature regime \eqref{eq_lt}, the Coulomb gas is almost surely equidistributed in a technical sense measured by Beurling-Landau densities \eqref{eq_ber} \cite{amro20}. The proof of that result provides the following discrepancy estimate near the boundary of the droplet. Under \eqref{eq_lt} and Conditions \ref{c1} $\ldots$ \ref{c7}, \cite[Proposition~1.3]{amro20} shows that
the following holds almost surely: if $p_n \to p_*$ is a sequence of zooming points approaching the boundary as
\begin{align*}
	\limsup_{n\to\infty}\sqrt{n}\, d(p_n, \partial S) \leq M < \infty,
\end{align*}
with $M \geq 1$, then
\begin{align}\label{ebou}
	\limsup_{n\to\infty}\left|\#\left[B(p_n,L/\sqrt{n})\right]- \tfrac{1}{2}\Delta Q(p_*)  L^2 \right| \le CM L^{5/3} \log L.
\end{align}
Here $C$ is a constant that depends on the potential $Q$ and the constant $c$ in \eqref{eq_lt}. \footnote{In \cite{amro20} the dependence on $M$ is left implicit, while here, for the sake of comparison, we quote the dependence that follows from the proof.}

As a main result, we show that in the above situation the following bound holds:
\begin{align}\label{eboupp}
	\limsup_{n\to\infty}\left|\#\left[B(p_n,L/\sqrt{n})\right]- \tfrac{1}{2}\Delta Q(p_*)  L^2 \right| \le CM L\sqrt{\log L}\log\log L.
\end{align}
This in turn follows from the following result concerning general compact observation domains $\Omega \subseteq \mathbb{C}$ in lieu of disks. We denote the usual Lebesgue measure by $|\cdot |$ and the 1-dimensional Hausdorff measure by $\hh$.

\begin{theorem}
	\label{cdis}
	Assume that $Q$ satisfies Conditions \ref{c1} to \ref{c7}, $\beta_n\ge c \log n$ with $c>0$, and $\Omega\subseteq \C$ is a  compact domain  with measure $|\Omega|\ge 2$ and connected boundary.
	Then there exists a deterministic constant $C=C(c,Q)$ such that almost surely,
	\begin{multline}\label{eq_a2}
		\limsup_{n\to\infty}\sup_{p\in \C} \left|\#\left[p+\Omega/\sqrt{n}\right]- \tfrac{n}{\pi} \Delta Q(p)  {\big|(p+\Omega/\sqrt{n})\cap S\big|} \right| 
		\\ \le C \hh(\partial\Omega)\sqrt{\log  |\Omega|}   \log(1+\log |\Omega|).
	\end{multline}
(Here, although $\Delta Q$ is only defined in a neighborhood $U$ given by Condition \ref{c5}, for $p\in U^c$ we interpret $\Delta Q(p)  |(p+\Omega/\sqrt{n})\cap S|$ as 0.) 
\end{theorem}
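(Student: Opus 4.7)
The plan is to follow the Laplace-transform concentration scheme for low-temperature Coulomb gases developed in \cite{amro20, ammaro}, replacing the Toeplitz trace bounds used there by the refined spectral asymptotics for Toeplitz operators on the range of the erfc/Faddeeva kernel that are the main novel ingredient advertised in the abstract; essentially all of the improvement over \eqref{ebou} will come from this new spectral input.

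\emph{Discretization and mean replacement.} The map $p\mapsto\#[p+\Omega/\sqrt n]$ is integer valued and $p\mapsto \tfrac{n}{\pi}\Delta Q(p)|(p+\Omega/\sqrt n)\cap S|$ is locally Lipschitz with controllable constant, so the supremum over $p\in\C$ in \eqref{eq_a2} reduces, up to a multiplicative loss absorbed into $C$, to a supremum over a lattice of spacing $\sim n^{-1/2}$ inside a ball of polynomial radius around $S$; for $p$ outside that ball, assumption \eqref{qgro} and Condition~\ref{c4} make both terms vanish. Next, the fine low-temperature estimates on the 1-point intensity $R_n^{\beta_n}$ from \cite{amcmp, amro20} show that $\mathbf{E}_n^{\beta_n}\#[p+\Omega/\sqrt n]$ differs from $\tfrac{n}{\pi}\Delta Q(p)|(p+\Omega/\sqrt n)\cap S|$ by at most $O(\hh(\partial\Omega))$, coming from the $n^{-1/2}$-transition layer of $R_n^{\beta_n}$ across $\partial S$ integrated along the portion of $\partial\Omega$ that crosses it; this is already absorbed by the RHS of \eqref{eq_a2}. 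It therefore suffices to control the centered fluctuation $X = \#[p+\Omega/\sqrt n] - \mathbf{E}_n^{\beta_n}\#[p+\Omega/\sqrt n]$ uniformly over the discretization.

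\emph{Reduction to a Toeplitz spectral bound.} Bounding $\mathbf{E}_n^{\beta_n}[e^{sX}]$ amounts to comparing the partition function of $Q$ with that of the perturbed potential $Q_s = Q - \tfrac{s}{\beta_n n}\chi_{p+\Omega/\sqrt n}$. A low-temperature expansion along the lines of \cite{amro20}, after cancellation of the linear-in-$s$ term against the mean shift, produces a quadratic-in-$s$ contribution of the order of $\beta_n^{-1}\tr(T_\Omega(I-T_\Omega))$, where $T_\Omega$ is a Toeplitz operator with symbol $\chi_\Omega$ on a model reproducing-kernel space: the Fock/Bergman space of the linearized droplet when $p$ is in the bulk, and the range of the erfc/Faddeeva kernel on a half-plane when $p$ is near $\partial S$. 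The division by $\beta_n\gtrsim \log n$ provided by \eqref{eq_lt} is crucial: it turns the sub-Gaussian variance into a quantity that shrinks in $n$, so that the almost-sure bound \eqref{eq_a2} can be made $n$-free.

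\emph{Improved spectral bound --- the main obstacle.} The core new input is an estimate of the form
\[\tr\bigl(T_\Omega(I-T_\Omega)\bigr) \;\lesssim\; \hh(\partial\Omega)\,\log|\Omega|\]
valid in both model settings. The Fock/Bergman case is accessible by standard Berezin-symbol calculus. The boundary case is where the real work lies: the erfc-kernel is not a projection, and its range is an infinite-dimensional half-plane Fock space carrying a transverse Gaussian profile, so the commutator identities available in the bulk do not apply directly. My approach is to decompose $\Omega$ into a strip of width $O(1)$ around $\partial S$ and a complementary bulk part; in the strip, a one-dimensional Toeplitz calculation in the direction normal to $\partial S$ (exploiting decay of the erfc-function and its derivatives) combines with a tangential covering of the portion of $\partial\Omega$ that crosses the strip, while cross terms are bounded using the Gaussian off-diagonal decay of the kernel. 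Plugging the spectral bound back into the Laplace transform yields $\mathbf{P}_n^{\beta_n}(|X|>t) \lesssim \exp\bigl(-ct^2\log n/(\hh(\partial\Omega)\log|\Omega|)\bigr)$, which at $t$ equal to the RHS of \eqref{eq_a2} is summable over the $n^{O(1)}$-size discretization and over $n\in\N$; Borel-Cantelli then delivers the claim.
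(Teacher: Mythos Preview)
Your proposal rests on a misreading of the method in \cite{amro20, ammaro}: those papers do \emph{not} use a Laplace-transform/exponential-moment scheme. They use Landau's sampling/interpolation method, and the paper under review does the same. Concretely, the paper quotes from \cite{amro20} deterministic inequalities (valid on a high-probability event) of the form
\[
\#\{k:\lambda_k(T_{1-\gamma,n})>1-\gamma^2/C\}\;\le\;N_n^+,\qquad
N_n^-\;\le\;\#\{k:\lambda_k(T_{1+\gamma,n})>\gamma^2/C\},
\]
where $T_{\rho,n}$ are Toeplitz operators on $\ww_{\lfloor\rho n\rfloor}$; then it passes to the scaling limit (the erfc kernel) and applies a refined \emph{eigenvalue-counting} estimate of the type
\[
\bigl|\#\{k:\lambda_k(T_\rho)>\alpha\}-\tfrac{1}{\pi}|\tilde\Omega|\bigr|\lesssim \hh(\partial\Omega)\bigl(\sqrt{\log\alpha^{-1}}+\eta^{-1}\log\alpha^{-1}\bigr)\log(1+\log\alpha^{-1}),
\]
obtained by reducing the erfc Toeplitz operator to a Ginibre one via the Bargmann transform and invoking \cite{MR}. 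The key spectral object is the counting function at a moving threshold $\alpha\asymp\gamma^2$, not $\tr(T(I-T))$.

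Your outline has two genuine gaps. First, the ``mean replacement'' step: for $\beta\neq 1$ there is no determinantal structure, and the cited references do \emph{not} provide pointwise control of $R_n^{\beta_n}$ fine enough to compare $\mathbf{E}_n^{\beta_n}\#[p+\Omega/\sqrt n]$ with $\tfrac{n}{\pi}\Delta Q(p)\,|(p+\Omega/\sqrt n)\cap S|$ up to $O(\hh(\partial\Omega))$; this is precisely one of the things the Landau method sidesteps. Second, the Laplace-transform step is asserted, not argued: you claim that perturbing the potential by $-\tfrac{s}{\beta_n n}\chi_{p+\Omega/\sqrt n}$ and expanding yields a quadratic term $\beta_n^{-1}\tr(T_\Omega(I-T_\Omega))$, but no mechanism is given for general $\beta$, the test function is an indicator (so free-energy expansions for smooth statistics do not apply), and nothing in \cite{amro20, ammaro} supplies such an expansion. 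Without this, the sub-Gaussian tail bound and the subsequent Borel--Cantelli argument have no foundation. Your erfc spectral sketch is also far from the paper's route (which is a clean Bargmann-transform comparison to the Ginibre case, Lemmas~\ref{lemcd}--\ref{llow}, feeding into Theorem~\ref{teig}); the ``strip plus bulk'' decomposition you describe would, even if it worked, only control $\tr(T(I-T))$, which is not the quantity the argument needs.
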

Equivalently, as the events in question are independent for different values of $n$, by the Borel-Cantelli lemma, the almost sure validity of \eqref{eq_a2} means that the failure probabilities
\begin{multline*}
p_n := \mathbb{P} \Big[ \sup_{p\in \C} \left|\#\left[p+\Omega/\sqrt{n}\right]- \tfrac{n}{\pi} \Delta Q(p)  {\big|(p+\Omega/\sqrt{n})\cap S\big|} \right| 
\\> C \hh(\partial\Omega)\sqrt{\log  |\Omega|}   \log(1+\log |\Omega|)
\Big]
\end{multline*}
are summable: $\sum_{n \geq 1} p_n <\infty$.

Some remarks are in order. First, $\pi^{-1}\Delta Q(p)|(p+\Omega/\sqrt{n})\cap S|$ is an approximation of $\mu((p+\Omega/\sqrt{n})\cap S)$, so interpreting it as $0$ when $p$ is away from a neighborhood of $S$ makes sense since in this case $(p+\Omega/\sqrt{n})\cap S=\varnothing$ for all sufficiently large $n$. 

Second, we note that \eqref{eq_a2} is trivial when $p$ is microscopically far away from $S$ in the sense that $d(p, S) \geq M /\sqrt{n}$ for sufficiently large $M$, since in this case the observation window finds no points with high probability \cite{Am} and therefore the magnitudes being compared in the left-hand side of \eqref{eq_a2} are both eventually 0.

Third, as an estimate concerning general observation windows $\Omega$, the discrepancy bound \eqref{eq_a2} is, up to logarithmic factors, probably optimal, since it almost matches the discrepancies of lattices measured on cubes. On the other hand, it is conceivable that for particular observation domains  even stronger estimates may hold, as is the case with lattice configurations and disks \cite{hardy1915expression}.
In Section \ref{sbou} we provide a version of Theorem \ref{cdis} for possibly non-connected observation windows.

While Theorem \ref{cdis} estimates the microscopic deviations of point statistics, it is also interesting to have a comparison with a limiting quantity as in \eqref{eboupp}. To include non-circular observation windows, we first introduce the \emph{signed distance} of $p \in \mathbb{C}$ to $\partial S$:
\begin{align}\label{edis}
	d^\pm(p,\partial S) = \begin{cases}
		\hphantom{-}d(p,\partial S) & \mbox{if }  p \in S 
		\\
		-d(p,\partial S) & \mbox{if } p \notin S
	\end{cases}.
\end{align}
If $p_n \to p \in \partial S$, $e^{i\theta}$ is the outer unit normal of $p$ at $\partial S$ and 
\[l=\lim_{n\to\infty} \sqrt{n} \cdot d^\pm(p_n,\partial S),\]
then, as $n$ grows, the set $(p_n+\Omega/\sqrt{n})\cap S$ approaches $p+(\Omega \cap \{z: \re(z e^{-i\theta}) \leq l\})/\sqrt{n}$. Hence, as an application of Theorem \ref{cdis}: 
\begin{multline}\label{more}
	\limsup_{n\to\infty}\left|\#\left[p_n+\Omega/\sqrt{n}\right]- \Delta Q(p) \frac{|\Omega \cap \{z: \re(z e^{-i\theta}) \leq l\}|}{\pi} \right| 
	\\ \le C \hh(\partial\Omega)\sqrt{\log  |\Omega|}   \log(1+\log |\Omega|).
\end{multline}
See Theorem~\ref{tdis2} for details. 

We remark that, although not discussed explicitly, all results hold also for the formal limit $\beta_n = \infty$ (Fekete sets) where particles are chosen as minimizers of the Hamiltonian \eqref{eq_ham}.

\subsection{Discrepancies in the bulk}
While Theorem \ref{cdis} concerns the whole droplet, our main motivation is to study discrepancies near its boundary. As we now discuss, slightly stronger estimates hold away from the boundary. For simplicity, in this case we assume that $Q$ is finite and $\mathcal{C}^2$ on the whole complex plane (denoted $Q \in \mathcal{C}^2$).

Bulk discrepancy estimates for arbitrary inverse temperatures $\beta$, general dimension, and point statistics computed on cubes are proved in \cite{ArSe}. When applied to the planar case and the low temperature regime \eqref{eq_lt}, \cite[Theorem~1]{ArSe} combined with a covering argument gives the following: there are constants $M=M(Q)$ and $C=C(c,Q)$ such that almost surely, if $\{p_n\}_{n=1}^\infty \subset \mathbb{C}$ satisfies the bulk regime condition
\begin{align}\label{eq_arsebulk}
d(p_n, S^c)\ge M n^{-1/4},
\end{align}
then
\begin{align}\label{eq_bb}
	\limsup_{n\to\infty}\left|\#\left[Q(p_n,L/\sqrt{n})\right]- \Delta Q(p_*)  \frac{L^2}{\pi} \right| \le C L,
\end{align}
where $Q(x,r)$ is a square of center $x$ and length $r$. With respect to disks of radius $L$, and other general observation windows dilated by a factor of $L$, a Whitney decomposition argument can be combined with \cite[Theorem~1]{ArSe} to obtain a bound similar to \eqref{eq_bb} with an extra $\log L$ factor on the right-hand side. Similar estimates for Fekete sets ($\beta=\infty$) go back to
\cite{MR3373044}.

On the other hand, as a technical step towards the equidistribution result for Beurling densities, \cite[Proposition~1.3]{amro20} implies that
under the less stringent bulk assumption
\begin{align*}
	\lim_{n\to \infty} n^{1/2} d(p_n, S^c) = \infty
\end{align*}
a bound similar to \eqref{eq_bb} holds with $L^{5/3}$ in lieu of $L$ (and point statistics computed on disks.) 

In this article we prove that $O(L)$ bulk discrepancy, as in \eqref{eq_bb}, holds under the rather weak bulk assumption $d(p_n,S^c)\ge M n^{-1/2} \log n$, for large $M$, and for general observation windows.

\begin{theorem}	\label{cdisbul}
	Assume that $Q\in \mathcal{C}^2$ and satisfies Conditions \ref{c1} to \ref{c4}, $\beta_n\ge c \log n$ with $c>0$, and $\Omega\subseteq \C$ is a compact domain with measure $|\Omega|\ge 1$ and connected boundary. Then there exist deterministic constants $M=M(Q)$ and $C=C(c,Q)$ such that, 
	almost surely,
	\begin{align}\label{eq_disbul}
		\limsup_{n\to\infty}\sup_{\substack{p\in S,\\ d(p,S^c)\ge M n^{-1/2}\log n}}\left|\#\left[p+\Omega/\sqrt{n}\right]-  \Delta Q (p)\frac{|\Omega|}{\pi}
		\right| \le C \hh(\partial\Omega).
	\end{align}
\end{theorem}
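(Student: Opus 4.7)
The strategy is to follow the scheme used for Theorem~\ref{cdis}, but exploit the fact that for bulk points the relevant local model kernel is the translation-invariant Bergman--Fock kernel rather than the erfc-kernel. This substitution eliminates the logarithmic losses that appear in the spectral estimates of the boundary case and produces the clean perimeter bound.

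\textbf{Step 1: Reduction to a linear statistic.} Smooth $\mathbf{1}_{p+\Omega/\sqrt n}$ at microscopic scale $cn^{-1/2}$ into a test function $f_p$; the corresponding error in the particle count is $O(\hh(\partial\Omega))$, well within the target bound. Form the centered linear statistic $X_p=\sum_j f_p(z_j)-\mathbb{E}\sum_j f_p(z_j)$. The hypothesis $\sqrt n\, d(p,S^c)\ge M\log n$ places the window $p+\Omega/\sqrt n$ deep inside $S$, so the first-order intensity asymptotics give $\mathbb{E}\sum_j f_p(z_j)=\Delta Q(p)|\Omega|/\pi+O(\hh(\partial\Omega))$ (using that $\hh(\partial\Omega)\ge c$ by the isoperimetric inequality, so $O(1)$ errors are absorbed).

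\textbf{Step 2: Bulk concentration via Fock-space Toeplitz bounds.} Apply the low-temperature Laplace-transform inequality that underpins the proof of Theorem~\ref{cdis}:
\[\mathbb{P}(|X_p|>t)\le C\exp\!\bigl(-c\beta_n t^2/V_p\bigr),\]
where $V_p$ is a spectral quantity attached to the Toeplitz operator $T_{f_p}$ on the range of the local model reproducing kernel at $p$. For bulk $p$, that kernel is, modulo errors of order $e^{-cM^2\log^2 n}$, the translation-invariant Bergman kernel of the Fock space with weight $e^{-\Delta Q(p)|z|^2}$. On Fock space, Berezin--Toeplitz symbol calculus (Gaussian off-diagonal decay of the reproducing kernel together with a Whitney-type decomposition of $\partial\Omega$) yields the sharp bound $V_p\le C\hh(\partial\Omega)^2$ with no logarithmic factors; the analogous estimate with the erfc-kernel near $\partial S$ loses a $\log$-factor, which is the source of the extra $\sqrt{\log|\Omega|}\log\log|\Omega|$ in Theorem~\ref{cdis}.

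\textbf{Step 3: Uniformity in $p$ and Borel--Cantelli.} With $t=A\hh(\partial\Omega)$ and $\beta_n\ge c\log n$, the single-point failure probability is at most $n^{-cA^2}$. Pick an $n^{-1/2}$-net $\mathcal N$ of the bulk region $\{p\in S:d(p,S^c)\ge Mn^{-1/2}\log n\}$, of cardinality $O(n)$. Moving $p$ by at most $n^{-1/2}$ changes $\#[p+\Omega/\sqrt n]$ only by the number of particles sitting in an $O(n^{-1/2})$-neighborhood of $\partial(p+\Omega/\sqrt n)$, which by the same discrepancy bound applied to a thin annular observation window is itself $O(\hh(\partial\Omega))$. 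A union bound over $\mathcal N$ yields total failure probability $\le n^{1-cA^2}$, summable for $A$ large; Borel--Cantelli concludes the almost sure statement.

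The main obstacle is the bulk spectral estimate $V_p\le C\hh(\partial\Omega)^2$ on the Fock space, together with a uniform, quantitative replacement of the true local reproducing kernel of the Coulomb gas by the Fock model. This replacement is precisely what the separation condition $d(p,S^c)\ge Mn^{-1/2}\log n$ is calibrated to afford: the sub-polynomial error $e^{-cM^2\log^2 n}$ is negligible compared to the per-point probability $n^{-cA^2}$ for $M$ large.
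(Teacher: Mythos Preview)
Your approach diverges sharply from the paper's and rests on an inequality that is neither proved in the paper nor available from its references. In Step~2 you invoke a ``low-temperature Laplace-transform inequality'' $\mathbb{P}(|X_p|>t)\le C\exp(-c\beta_n t^2/V_p)$ and claim it ``underpins the proof of Theorem~\ref{cdis}''. This is a misreading: the proof of Theorem~\ref{cdis} uses no sub-Gaussian concentration for linear statistics. It goes through the sampling and interpolation inequalities of Lemma~\ref{lemma_quote}, which are \emph{deterministic} once a high-probability event holds, and then compares the particle count to eigenvalue counting functions of Toeplitz operators via Landau's method; no variance quantity $V_p$ appears anywhere. Establishing a concentration inequality of the shape you describe, uniformly over $p$ under the weak bulk condition $d(p,S^c)\ge M n^{-1/2}\log n$, would be a substantial independent result, not a citation. (Step~1 and Step~3 also lean on the very discrepancy bound you are trying to prove --- bounding particles in thin collars of $\partial\Omega_n$ by $O(\hh(\partial\Omega))$ --- though this particular circularity could be repaired using separation alone.)

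The paper's actual route to Theorem~\ref{cdisbul} is both different and more direct. It first proves the finite-$n$ estimate Theorem~\ref{tdisbul2} by adapting the Lagrange-polynomial argument of Lev--Ortega-Cerd\`a: write $K_n(z,w)=\sum_j \ell_j(z) K_n(z_j,w)$, use the high-probability bounds $\|\ell_j\|_\infty\le A^k$ and the separation from Lemma~\ref{lemma_quote_2}, and dominate $|N_n-\tr(T_n)|$ by integrals of $|K_n(z,w)|$ over $\Omega_n\times\Omega_n^c$ plus a thin collar of $\partial\Omega_n$. The exponential off-diagonal decay of $K_n$ in the bulk at \emph{finite} $n$ (Lemma~\ref{efine}) and the diagonal asymptotic $K_n(z,z)\approx n\Delta Q(z)$ (Lemma~\ref{lpun}) then give $|N_n-\tr(T_n)|\lesssim \hh(\partial\Omega)$ with constants uniform in $p$; no net, no union bound, and no passage to a limiting Fock model are needed. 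Theorem~\ref{cdisbul} follows by applying Theorem~\ref{tdisbul2} with $k=2$ and Borel--Cantelli. The conceptual point you are missing is that in the bulk one works directly with the finite-$n$ kernel $K_n$, whose decay is already sharp enough, rather than approximating it by a Fock model and trying to control the approximation error probabilistically.
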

We also provide a more quantitative version of Theorem \ref{cdisbul} which also concerns more general observation windows in Theorem \ref{tdisbul2}.

Let us mention that the recent remarkable \emph{overcrowding estimates} from \cite{Th} (valid for general dimension and temperature) have certain consequences for discrepancy statistics, which are moderately related to Theorem \ref{cdisbul}. Specifically, \cite[Theorem 5]{Th} implies a \emph{semi-discrepancy} estimate
\begin{align}\label{eq_th}
\#[B(p_*,L/\sqrt{n})] - \Delta(Q)(p_*) L^2 \leq C L^{4/3} \log L
\end{align}
valid for each $p_* \in \mathbb{C}$ with high probability (note the absence of absolute values in \eqref{eq_th}). As opposed to \cite[Theorem 1]{ArSe}, failure probabilities in \cite[Theorem 5]{Th} do not depend on $n$ and $\beta$, and thus do not exhibit a gain in the low temperature regime \eqref{eq_lt} that would lead to a uniform almost sure bound as in Theorem \ref{cdisbul}. Note also that even if the probability bounds in \cite[Theorem 5]{Th} could be improved to exhibit such a gain in the low temperature regime (and we suspect they can), the estimate would only be meaningful as a semi-discrepancy under the (quite sharp) bulk assumption $n^{1/2} d(p,S^c) \gtrsim 1$. Outside this regime \eqref{ebou} --- or Theorem \ref{cdis} --- shows that for large $L$ the left-hand side of \eqref{eq_th} is with high probability a negative number of order $L^2$, as, indeed,  $\tfrac12 \Delta Q(p_*) L^2$ is a more accurate approximation of the point statistic in question. A similar discussion applies to the discrepancy estimates in \cite[Theorem 6]{Th}, which are of the order $L^{6/5} \log L$ and 
valid under the bulk condition \eqref{eq_arsebulk}.

Finally, we mention that the regularity assumptions on $Q$ are excessive for the bulk regime, while \cite{ArSe, Th} require minimal regularity. The results we rely on are however known to admit regularity improvements at the cost of more technical formulations
\cite[Appendix A.2]{AmHeMa2}, specially since we do not use them in their full force. Similarly, the assumption that $Q\in \mathcal{C}^2$ in Theorem \ref{cdisbul} can be dropped, as regularity is only needed near the droplet; see Remark \ref{rem_c2}.

\subsection{Technical overview and methods}\label{sec_methods}
Our proof of Theorem \ref{cdis} elaborates on the equidistribution result in \cite{amro20} and, as we now discuss, refines a key step to obtain the announced discrepancy estimates. The outset of the argument uses ideas from sampling theory, which also play a similar role in \cite{AmOC, MR3831027, LeCe, ammaro}, and we now briefly review. The basic intuition goes back to the theory of orthogonal polynomials on the real line: as $\beta\to\infty$, Coulomb samples are expected to approach Fekete configurations, which provide \emph{quadrature rules} for weighted polynomials \cite{Meh,Ism}.

In our case, the central object is the space of \emph{planar weighted polynomials},
\begin{align*}
\ww_n=\left\{f=q e^{-nQ(z)/2}: \ \deg q \le n-1\right\}\subseteq L^2(\C,dA),
\end{align*}
and its \emph{reproducing kernel} $K_n$, which represents the orthognal projection $P^{(n)}:L^2(\C,dA) \to \ww_n$. As shown in \cite[Theorem 5.1]{amro20}, under Conditions \ref{c1} to \ref{c7}, a random sample of the Coulomb gas, taken in the low temperature regime \eqref{eq_lt}, solves the following \emph{sampling and interpolation problems}:
\begin{itemize}
\item[(i)]: (`Sampling'). For $0< \rho <1$,
\begin{align}\label{eq_s1}
\int_{S + B_{M/\sqrt{n}}(0)} |f|^2 \, dA \leq \frac{C}{n(1-\rho)^2} \sum_{j=1}^n |f(\zeta_j)|^2, \qquad f \in \ww_{\rho n}.
\end{align}
\item[(ii)]: (`Interpolation'). For $\rho >1$,
given $\{a_j\}_{j=1}^n \subset \mathbb{C}$, there exists $f \in \ww_{\rho n}$
such that
\begin{align}\label{eq_s2}
\begin{aligned}
	&f(z_j)=a_j, \qquad j=1,\ldots,n, \mbox{ and}
	\\
	&\int_{\mathbb{C}} |f|^2 \, dA \leq \frac{C}{n(\rho-1)^2} \sum_{j=1}^n |a_j|^2.
\end{aligned}
\end{align}
\end{itemize}
Here $C$ and $M$ are deterministic constants, $\rho n$ is assumed to be an integer, and the properties hold for all sufficiently large $n$, depending on the allowed failure probability. It is important to note that the estimate \eqref{eq_s1} holds on the subspace $\ww_{\rho n} \subset \ww_n$ with $\rho <1$ while \eqref{eq_s2} provides an interpolant $f$ belonging to the larger space $\ww_{\rho n} \supset \ww_n$ with $\rho>1$.

The sampling and interpolation estimates allow us to invoke and aptly adapt a general principle going back to Landau's work on bandlimited functions with disconnected spectra \cite{la67}: Under \eqref{eq_s1} or \eqref{eq_s2}, the point statistic $\# \Omega_n$,
associated with a microscopic observation domain $\Omega_n = \Omega/\sqrt{n} + p_n$ scanning the boundary of the droplet,
can be asymptotically compared to the eigenvalue counting function of the \emph{concentration operator} $P^{(\rho n)} M_{\Omega_{n}}: \ww_{\rho n} \to \ww_{\rho n}$, $f \mapsto P^{(\rho n)}(f \chi_{\Omega_{n}})$. More precisely, for a carefully chosen \emph{spectral threshold} $\delta \in (0,\frac12)$,
\begin{align}\label{eq_s3}
\begin{aligned}
  \# \Omega_{n} &\geq \#\{\lambda \in \sigma(P^{(\rho n)} M_{\Omega_{n}}): \lambda > {1-\delta}\} + E_{\delta,\rho,n}, \qquad \rho <1,
  \\
  \# \Omega_{n} &\leq \#\{\lambda \in \sigma(P^{(\rho n)} M_{\Omega_{n}}): \lambda > {\delta}\} + E_{\delta,\rho,n}, \qquad \rho >1,
\end{aligned}
\end{align}
where $E_{\delta,\rho,n}$ are suitably bounded error terms. The choice of the spectral threshold $\delta$ is dictated by the bounds (stability margins) in the sampling \eqref{eq_s1} and interpolation inequalities \eqref{eq_s2}. The adequate choice satisfies
\begin{align}\label{eq_delta}
	\delta \asymp |1-\rho|^{2} \in (0,1/2).
\end{align}
The concentration operator $P^{(\rho n)} M_{\Omega_{n}}$ is a \emph{Toeplitz operator} with an indicator function as symbol, and, by
\eqref{eq_s3}, its limiting spectral profile tightly describes the point statistic $\# \Omega_{n}$.
As $n \to \infty$, the microscopic rescaling of the reproducing kernel $K_n$ converges to a translated \emph{erfc-kernel} (also known as Faddeeva or plasma dispersion kernel) \cite{MR4366416}:
\begin{align}\label{eq_fl}
	F_l(z,w)=G(z,w)F(z+\overline{w}-2l),
\end{align}
where
\begin{align}\label{egin}
	&G(z,w)=e^{z\overline{w}-|z|^2/2-|w|^2/2},
	\\
	\label{eerf}
	&F(z)=\frac{1}{2}\erfc\left(\frac{z}{\sqrt{2}}\right)
	= \frac{1}{\sqrt{2\pi}} \int_{-\infty}^0 e^{-(z-t)^2/2} dt.
\end{align}
This key fact is leveraged to take a limit on \eqref{eq_s3}. In the limit, the error terms $E_{\delta,\rho,n}$ can be dominated by the observation perimeter $\hh(\partial{\Omega})$.

On the other hand, the limit of the eigenvalue counting function in \eqref{eq_s3} is the eigenvalue counting function of a \emph{concentration operator acting on the range-space of an erfc-kernel} \eqref{eq_fl}:
\begin{align}\label{eq_x3}
T_{\rho} f (z) = \int_{\sqrt{\rho} \tilde{\Omega}} f(w) F_{l}(z,w)\, dA(w),
\end{align}
where $l=l(\rho)$ and the limiting domain $\tilde{\Omega}$ is obtained from $\Omega$ after rescaling near $p_n$. The equidistribution estimates in \cite{amro20} are proved by using the following crude spectral asymptotics for the operator \eqref{eq_x3},
\begin{align}\label{eq_x1}
\# \{ \lambda \in \sigma(T_{\rho}): \lambda > \delta \} = \rho \cdot |\tilde{\Omega} | + \hh( \partial\tilde{\Omega}) \cdot \frac{1}{\delta} \cdot O(1), 
\end{align}
which follow by comparing the first two moments of $T_{\rho}$ (trace and Hilbert-Schmidt norm). Indeed, the calculation of the Beurling-Landau densities \eqref{eq_ber} in \cite{amro20} follows by letting $\Omega$ be a disk of growing radius, and letting $\rho \to 1$ at an adequate speed dictated by \eqref{eq_delta}. Moreover, choosing $\rho$ as an adequate function of $|\tilde \Omega|$ and $\hh(\partial \tilde{\Omega})$ one obtains the discrepancy bound \eqref{ebou}.

To obtain the announced discrepancy bound \eqref{eq_a2} we shall improve \eqref{eq_x1} to
 \begin{align}\label{eq_x2}
 	\# \{ \lambda \in \sigma(T_{\rho}): \lambda > \delta \} = \rho\cdot|\tilde{\Omega} | + \hh(\partial\tilde{\Omega}) \cdot \sqrt{\log(\tfrac{1}{\delta})} \cdot \log \log (\tfrac{1}{\delta}) \cdot O(1).
 \end{align}
Our recent work \cite{MR} provides similar spectral estimates for concentration operators on the range of the \emph{Ginibre kernel} \eqref{egin} and other rapidly decaying kernels. While these do not directly apply to the slowly decaying erfc-kernel, we develop an approximation argument based on the \emph{Bargmann transform} \cite{MR157250}. This transformation maps
``signals'' in $L^2(\mathbb{R})$ into their ``phase-space representations'' in the range-space of the Ginibre kernel (the weighted Bargmann-Fock space of analytic functions). Under this correspondence, the action of the erfc-kernel translates into the cut-off operation $L^2(\mathbb{R}) \ni f \mapsto f \cdot \chi_{(-\infty,0)} \in L^2((-\infty,0))$. This insight, which also plays a role in \cite{AmKaMa}, is exploited here to approximately compare the erfc Toeplitz operator \eqref{eq_x3} to a Ginibre concentration operator with a suitably modified domain.

Some remarks are in order. First, spectral estimates such as \eqref{eq_x2} are classical in operator theory and usually studied in more generality. What is special of \eqref{eq_x2} is the quality of the $O(1)$ term, which is independent of the spectral threshold $\delta$, and allows us to choose it as a function of the geometry of the domain $\Omega$. In contrast, more classical asymptotics for dilated domains \cite{MR593228} provide more precise spectral expansions than \eqref{eq_x2} at the cost of requiring that $\delta$ be kept fixed (which would preclude the application intended here).

Second, we remark that the use of general observation domains is demanded by our proof strategy. Even if we were only interested in computing discrepancies on cubes or disks, the geometry of the droplet and the procedure to compare erfc and Ginibre Toeplitz operators lead to more general domains. Thus, while the eigenvalues of Ginibre Toeplitz operators 
associated with disks can be computed in terms of certain special functions \cite{da88}, we rely essentially on the more general results from \cite{MR}.

Third, we mention our recent work on separation and discrepancy of $\beta$-ensembles on the real line \cite{ammaro}, which follows a similar strategy as here, though with substantial technical differences in its implementation. In particular, \cite{ammaro} relies on eigenvalue estimates for sinc kernels proved in \cite{BoJaKa,KaRoDa}, while the technical work in this article is in the relation between erfc and Ginibre Toeplitz operators, and the particulars of two-dimensional geometry.

Finally, we comment on the bulk regime. Away from the boundary of the droplet, simpler arguments are possible because of the fast decay of the reproducing kernel, which can be precisely estimated also for finite $n$ \cite{MR2430724,AmHeMa,MR3010273}. To prove Theorem \ref{cdisbul} we follow and slightly adapt the argument of \cite{LeCe} to bound the discrepancy of Fekete points on general fiber bundles (with no droplet). While the argument of \cite{LeCe} is inspired by Landau's method \cite{la67}, and particularly by \cite{NiOl}, it does not explicitly refer to sampling \eqref{eq_s1} and interpolation \eqref{eq_s2} and avoids the need for the auxiliary parameter $\rho$. This leads to the discrepancy estimate \eqref{eq_disbul}, which, in contrast to
\eqref{eq_a2}, has no log factors, and also to a more precise estimate applicable to finite $n$; see Theorem \ref{tdisbul2}.

\subsection{Further remarks}
Our work relies on the convergence of the reproducing kernel of the space of weighted polynomials towards the erfc-kernel, under microscopic rescalings near the boundary \cite{HeWe}. Similar results are available for other model ensembles of contemporary interest
besides the ones studied here, including those of \cite{AmChCr22}, which concern radially symmetric potentials and droplets with ring-shaped spectral gaps, and the lemniscate ensembles from \cite{BY}. Our results extend suitably to these settings. 

Similarly, the method considered in this article may be also useful in the weakly non-Hermitian regime (also known as almost-Hermitian) that addresses the interface between dimensions 1 and 2. There the limiting kernels in the bulk interpolate between the sine-kernel and the Ginibre kernel \cite{FKS,ACV,AmBy}. Their structure is similar to \eqref{eq_fl}, taking the form:
\[\widetilde{K}(z,w)=G(z,w)\widetilde{F}(-i(z-\overline{w})),\]
where for some $a>0$,
\[\widetilde{F}(z)=\frac{1}{\sqrt{2\pi}} \int_{-a}^{a} e^{-(z-t)^2/2} dt.\]
We expect that the spectral estimates presented here can be adapted to the Toeplitz operators associated with such kernels.

Finally, although not explicitly discussed, all our results apply also to the formal limit $\beta=\infty$ (Fekete configurations), as our analysis is based on the sampling and interpolation properties described in Section \ref{sec_methods}, and these also hold for Fekete points \cite{AmOC, amro20}. In fact, samples of the Coulomb gas at low temperature can be considered easier to manufacture proxies for Fekete configurations, that yet retain similar desirable properties. To the best of our knowledge, Theorem \ref{cdis} and other results concerning the boundary are novel even in the Fekete case, and thus improve on corresponding results in \cite{AmOC,MR3373044}.

\subsection*{Organization}
The connection of our problem to sampling theory, and the details on Landau's method described above can be found in \cite{amro20} and will not be repeated here. Rather, we quote certain technical results from \cite{amro20} and build on them. This paper is organized as follows.

In Section~\ref{spre} we establish notation and provide some auxiliary results. 

In Section~\ref{sspe} we derive spectral estimates for concentration operators associated with the erfc-kernel.

In Section~\ref{sbou} we prove our main result, Theorem~\ref{cdis}, and Theorem \ref{tdis2}, which gives \eqref{more}.

In Section~\ref{sbul} we prove the discrepancy result in the bulk, Theorem~\ref{cdisbul}, together with a more general version, Theorem \ref{tdisbul2}.

\subsection*{Acknowledgment}
The authors are very grateful to Yacin Ameur for his insightful comments.

\section{Preliminaries}\label{spre}

\subsection{Notation and basic facts}
We write $a\lesssim b$ whenever there is a constant $C>0$ depending possibly on $Q$ and $c$ (recall that $\beta_n\ge c \log n$) but not on other parameters, such that $a\le C b$.

Recall that $dA(z)$ is the differential of the area measure divided by $\pi$. To shorten expressions we sometimes write $dA(z_1,\ldots,z_n)$ instead of $dA(z_1)\ldots dA(z_n)$. We also let $|\cdot|$ denote the usual Lebesgue measure and $\hh$ the 1-dimensional Hausdorff measure. 

We will work with the reproducing kernel $K_n(z,w)$ of the space of weighted polynomials
\[\ww_n=\{f=q e^{-nQ(z)/2}: \ \deg q \le n-1\}\subseteq L^2(\C,dA).\] 
It is worth mentioning that this is the canonical correlation kernel for the determinantal case $\beta=1$ \cite{Fo}.

\subsection{Scaling limits}\label{sec_sc}
\begin{definition}[Microscopic direction and position]
	\label{dlim}
	Let $\{p_n: n \geq 1\} \subset \mathbb{C}$ with
	$p_n \to p \in S$.
	
	$\bullet$ If $p_n\to p\in \partial S$, let $\theta\in[0,2\pi)$ be such that $e^{i\theta}$ is the outer unit normal of $\partial S$ at $p$.
	
	Furthermore, note that for sufficiently large $n$, there is a unique $q_n\in \partial S$ that is closest to $p_n$. Let $\theta_n\in[0,2\pi)$ be such that $e^{i\theta_n}$ is the outer unit normal of $\partial S$ at $q_n$. Define also the \emph{microscopic position relative to the boundary} as
	\begin{align}\label{elim}
		l=\lim_{n\to\infty} \sqrt{ n} d^\pm(p_n,\partial S)=\lim_{n\to\infty} \sqrt{ n} (q_n-p_n)e^{-i\theta_n},
	\end{align}
	provided that the limit exists.
	
	$\bullet$ For $p_n\to p\in  S^\circ$, set $l=\infty$ and $\theta_n=\theta=0$.
\end{definition}
In the definition above, when $p \in S^\circ$, the parameters $\theta_n,\theta$ play no important role; they serve the purpose of treating the bulk and boundary regimes together.

\begin{lemma}\label{lext}
	Any sequence $p_{n_k}\to p\in S$ with $\sqrt{ n_k} d^\pm(p_{n_k},\partial S)\to l$ and $e^{i\theta_{n_k}}\to e^{i\theta}$ as in Definiton \ref{dlim}, can be extended to $p_{n}\to p\in S$ with $\sqrt{ n} d^\pm(p_{n},\partial S)\to l$ and $e^{i \theta_{n}}\to e^{i\theta}$.
\end{lemma}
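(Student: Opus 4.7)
The plan is to construct the extended sequence $p_n$ for $n \notin \{n_k\}$ by explicit formulas, splitting cases according to whether the limit $p$ lies in the interior or on the boundary of $S$, and in the boundary case, whether $l$ is finite or infinite. In each case, we prescribe $p_n$ in terms of the data $(p, l, e^{i\theta})$ and verify the three required limits separately.

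When $p\in S^\circ$, Definition \ref{dlim} forces $l=\infty$ and $\theta=0$. Setting $p_n=p$ for every $n\notin\{n_k\}$ does the job: the full sequence still converges to $p\in S^\circ$, and the convention in Definition \ref{dlim} then assigns $l=\infty$ and $\theta_n=\theta=0$ automatically. When $p\in\partial S$, we use the fact that Conditions \ref{c1}--\ref{c5} together with \ref{c2} ensure $\partial S$ is a real-analytic simple curve in a neighborhood of $p$, with a well-defined outer unit normal $e^{i\theta}$. For $l\in\mathbb{R}$, define
\[
p_n := p - (l/\sqrt{n})\, e^{i\theta}
\]
for $n\notin\{n_k\}$ (sufficiently large, so $p_n$ lies in the analytic neighborhood of $p$). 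For $l=+\infty$ (respectively $l=-\infty$), define $p_n:=p-n^{-1/4}e^{i\theta}$ (resp.\ $p_n:=p+n^{-1/4}e^{i\theta}$), which is a point strictly inside (resp.\ outside) $S$ at macroscopic scale but microscopically close to $p$.

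The three limits are then checked as follows. Trivially $p_n\to p$. For the signed distance, parametrize $\partial S$ locally by arclength $s\mapsto \gamma(s)$ with $\gamma(0)=p$ and $\gamma'(0)=ie^{i\theta}$; by real-analyticity one has the expansion $\gamma(s)=p+s\, ie^{i\theta}+O(s^2)e^{i\theta}$ uniformly on a neighborhood of $0$. Minimizing $|p_n-\gamma(s)|$ then gives the minimizer at $s=O(1/n)$ and
\[
d^\pm(p_n,\partial S)=\tfrac{l}{\sqrt{n}}+O(1/n)
\]
in the finite case (and analogous estimates in the infinite cases), so $\sqrt{n}\,d^\pm(p_n,\partial S)\to l$. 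For the normal, the unique closest boundary point $q_n$ satisfies $q_n\to p$; the outer normal varies continuously (in fact real-analytically) along $\partial S$, hence $e^{i\theta_n}\to e^{i\theta}$.

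The only technical step of any substance is the signed distance expansion, which boils down to a standard application of the implicit/Taylor argument for a smooth curve parametrized near a regular point; the rest is bookkeeping across the case distinction. No step poses a genuine obstacle beyond correctly tracking the sign conventions of $d^\pm$ in the interior and exterior subcases.
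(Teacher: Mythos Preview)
Your argument is correct, but it takes a different route from the paper's. The paper interpolates between consecutive subsequence indices using the \emph{subsequence data}: for $n_k<n<n_{k+1}$ it sets
\[
p_n=q_{n_k}-e^{i\theta_{n_k}}\sqrt{n_k/n}\,d^\pm(p_{n_k},\partial S),
\]
i.e., it slides $p_{n_k}$ along its own normal segment until the scaled signed distance matches. This makes $q_n=q_{n_k}$ and $\theta_n=\theta_{n_k}$ exactly, and $\sqrt{n}\,d^\pm(p_n,\partial S)=\sqrt{n_k}\,d^\pm(p_{n_k},\partial S)$, so all three limits are inherited from the subsequence without any case distinction on $l$ (finite or infinite). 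You instead build the extension from the \emph{limit data} $(p,l,\theta)$ via $p_n=p-(l/\sqrt{n})e^{i\theta}$ (and $p\mp n^{-1/4}e^{i\theta}$ for $l=\pm\infty$), and then appeal to the tubular neighborhood structure of $\partial S$ to compute the signed distance and identify $q_n$.

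Both approaches rely on the same geometric fact (unique nearest point along short normal segments of a smooth curve). The paper's version is a bit slicker because it is uniform in $l$ and needs no Taylor expansion; yours is more explicit and self-contained. A minor remark: in your finite-$l$ case the error is actually zero for large $n$ (the closest boundary point to $p-(l/\sqrt{n})e^{i\theta}$ is exactly $p$ once $|l|/\sqrt{n}$ is below the tubular radius), so the $O(1/n)$ correction and the minimization argument are not really needed. Also, the case $l=-\infty$ does not arise in the paper's applications, but covering it does no harm.
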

\begin{proof}
	First assume that $p\in\partial S$. For $k\in \N$ consider $q_{n_k}$ as in Definiton \ref{dlim} and for every $n_k<n<n_{k+1}$ define
	\[p_n=q_{n_k}-e^{i\theta_{n_k}}\sqrt{\tfrac{n_k}{n}}d^\pm(p_{n_k},\partial S).\]
	Notice that $p_n$ is the unique point in the segment $[p_{n_k},q_{n_k}]$ such that $\sqrt{ n} d^\pm(p_{n},\partial S)=\sqrt{ n_k} d^\pm(p_{n_k},\partial S)$. Moreover, $\theta_{n}=\theta_{n_k}$ and the conclusion follows.
	
	On the other hand, if $p\in  S^\circ$, we can simply take $p_n=p_{n_k}$ and $\theta_{n}=0$ for every $n_k<n<n_{k+1}$. It is easy to check that for this choice,
	\[\sqrt{ n} d^\pm(p_{n},\partial S)\to \infty. \qedhere\]
\end{proof}

For $p_n\to p\in S$, define the zooming map $\Gamma:\C\to\C$ similarly as in \eqref{elim} by
\[\Gamma(z)= \sqrt{n \Delta Q(p_n)} (z-p_n)e^{-i\theta_n},\]
where $\theta_n$ is defined as in Definition~\ref{dlim}.

The following fundamental result, which follows from \cite{MR4366416, HeWe} and \cite[Lemma~2]{AmKaMaWe}, describes the scaling limits of the reproducing kernels $K_n$ up to so-called cocycle factors. 
\begin{lemma}\label{lklim}
	Suppose that Q is real-analytic and strictly subharmonic in a neighborhood of the droplet. There exists a sequence of continuous functions $g_n: \mathbb{C} \to \mathbb{S}^1$ (cocycles) so that the following statements hold.
	\begin{itemize}
		\item[(i)] (`Bulk regime'): If $p_n\to p \in  S$ satisfies
		\begin{align*}
		\sqrt{n}\, d(p_n, S^c) \to \infty,
		\end{align*}
		then for every $z,w\in\C$,
		\[\lim_{n\to\infty} g_n(z)\overline{g_n(w)} \frac{K_n(\Gamma^{-1}(z),\Gamma^{-1}(w))}{n \Delta Q(p_n)}=G(z,w).\]
		
		\item[(ii)] (`Boundary regime'): If $S$ is connected, the boundary $\partial S$ is everywhere smooth, $p_n\to p \in \partial S$, and
		\begin{align*}
			\limsup_{n\to\infty}\sqrt{n}\, d(p_n, \partial S) < \infty,
		\end{align*}
		and the limit $l$ given by \eqref{elim} exists, then for every $z,w\in\C$,
		\[\lim_{n\to\infty} g_n(z)\overline{g_n(w)} \frac{K_n(\Gamma^{-1}(z),\Gamma^{-1}(w))}{n \Delta Q(p_n)}=F_{{\sqrt{\Delta Q(p)}} \cdot l}(z,w).\]
	\end{itemize}
Moreover, in both cases the convergence is uniform on compact subsets of $\C^2$.
\end{lemma}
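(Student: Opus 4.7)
The plan is to assemble two well-known scaling limits for the polynomial reproducing kernel $K_n$: the bulk asymptotics, which are by now classical, and the sharp boundary asymptotics of Hedenmalm--Wennman \cite{HeWe} (see also \cite{MR4366416}). Under our standing assumptions \ref{c1}--\ref{c7} both regimes are already covered in the literature, so the work consists in quoting the correct expansions, verifying the hypotheses, handling the cocycle factors uniformly in $n$, and upgrading subsequential to sequential convergence via Lemma \ref{lext}.

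First, I would treat the bulk case (i). Off-diagonal asymptotics for $K_n$ on compacts of a neighborhood of a bulk point (see \cite{MR2430724, AmHeMa, MR3010273}) give an expansion of the form
\[K_n(z,w) = n \Delta Q(p_n)\, e^{n\Phi(z,\bar w)}\bigl(1 + o(1)\bigr),\]
where $\Phi$ is an anti-holomorphic polarization of $Q$ satisfying $\Phi(z,\bar z)=Q(z)$. Substituting $z \mapsto \Gamma^{-1}(z)$, Taylor-expanding $Q$ around $p_n$ through the quadratic part, and absorbing the resulting oscillating phase factors (which depend on a single variable, hence factor as $g_n(z)\overline{g_n(w)}$) produces precisely the Ginibre kernel $G(z,w)=e^{z\bar w - |z|^2/2 - |w|^2/2}$ in the limit, uniformly on compacts of $\mathbb{C}^2$.

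For the boundary case (ii), I would invoke the Hedenmalm--Wennman asymptotic expansion \cite{HeWe}, specialized to a smooth boundary point $q_n \to p \in \partial S$ and normalized by the same cocycle correction as above. Their result, rephrased in our notation, asserts that
\[g_n(z)\overline{g_n(w)}\,\frac{K_n(\Gamma^{-1}(z),\Gamma^{-1}(w))}{n \Delta Q(p_n)} \longrightarrow G(z,w)\, F\!\bigl(z + \bar w - 2\sqrt{\Delta Q(p)}\, l\bigr),\]
locally uniformly in $(z,w)\in \mathbb{C}^2$, which is exactly $F_{\sqrt{\Delta Q(p)}\cdot l}(z,w)$. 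Here connectedness of $S$ together with Conditions \ref{c2} and \ref{c5} guarantees that $\partial S$ is a smooth real-analytic curve so the expansion in \cite{HeWe} applies uniformly in the closest boundary point $q_n$, while Conditions \ref{cau} and \ref{c4} ensure hard-edge behaviour with no shallow points. Lemma \ref{lext} then lets me reduce from sequences where the limits $p$, $l$, $e^{i\theta}$ merely exist along a subsequence to the full-sequence statement required here.

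The principal obstacle is the bookkeeping for the cocycles $g_n$. The reproducing kernel carries oscillating phase factors of the form $e^{in\psi(z,w)}$ whose modulus does not stabilize under microscopic rescaling; a clean limit emerges only after stripping these by a unimodular factorization $g_n(z)\overline{g_n(w)}$. One must check that a \emph{single} choice of $g_n$ works simultaneously in the bulk and boundary regimes, that $g_n$ depends only on a one-variable coordinate (so that the cocycle cancels when $K_n$ is used to build the Toeplitz operators of Section~\ref{sspe}), and that the $o(1)$ remainders in the cited expansions can be taken uniform on compact sets in $(z,w)$. Once this bookkeeping is in place, the combined statement follows directly from \cite{MR4366416, HeWe, AmKaMaWe}.
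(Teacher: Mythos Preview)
Your proposal is correct and matches the paper's treatment: the paper does not prove this lemma at all but simply attributes it to \cite{MR4366416, HeWe} and \cite[Lemma~2]{AmKaMaWe}, the same sources you assemble. Two minor points: Lemma~\ref{lext} is not needed here (the statement already assumes the full-sequence limits exist; that lemma is invoked later, in the proof of Lemma~\ref{llim}, to handle integer parts), and the cocycles $g_n$ may depend on the zooming sequence $p_n$, so your concern about a single choice working simultaneously across regimes is unnecessary.
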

While \cite{HeWe} provides higher order asymptotics for orthogonal polynomials, Lemma \ref{lklim} only needs first order ones, for which \cite[Section 5]{AmCr} can provide a shorter route. For more on scaling limits, we refer to \cite[Section 5.4]{byun2022progress} and the references therein.

\subsection{Boundary regularity} We will work with compact domains whose boundary satisfies the following geometric condition with respect to the one-dimensional Hausdorff measure $\mathcal{H}^1$.

\begin{definition}
	An $\hh$-measurable set $E \subseteq \R^2$ or $\C$ is said to be lower Ahlfors 1-regular (regular for short) at scale 
	$\eta>0$ if there exists a constant $\kappa>0$ such that
	\begin{align*}
		\hh\big(E \cap B_{r}(z) \big)\geq \kappa \cdot r, \qquad 0 < r \leq \eta, \quad z \in E.
	\end{align*}
	In this case, the largest possible constant $\kappa$ is denoted
	\begin{align*}
		\kappa_{E,\eta} = \inf_{0 < r \leq \eta} \inf_{z \in E}
		\frac{1}{r} \cdot \hh\big(E \cap B_{r}(z) \big).
	\end{align*}
\end{definition}

Note that if $\hh(E)>0$, by differentiation around a point of positive $\hh$-density, we always have
\begin{align}\label{eq_cubb}
	\kappa_{E,\eta} \leq 2.
\end{align}

Before proceeding with the study of discrepancy, we mention a few properties of regularity. Unlike in the high-dimensional case, regularity is a fairly general property in two dimensions. For completeness we include a proof of the following well-known fact.

\begin{lemma}\label{rrb}
	Let $E\subseteq \C$ be a compact connected set with more than one point. Then, $E$ is regular at scale $\eta=\hh(E)$ and $1\le \kappa_{E,\eta}\le 2$. 
\end{lemma}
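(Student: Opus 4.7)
The plan is to treat the two inequalities separately. For the upper bound $\kappa_{E,\eta}\le 2$, I first verify that $\hh(E)>0$: since $E$ is compact, connected, and contains distinct points $a,b$, the $1$-Lipschitz map $w\mapsto|w-a|$ sends $E$ to a compact connected subset of $[0,\infty)$ containing both $0$ and $|b-a|>0$, which forces $\hh(E)\ge|b-a|>0$. Therefore $\eta>0$, and the general density remark \eqref{eq_cubb} immediately gives $\kappa_{E,\eta}\le 2$.

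For the lower bound $\kappa_{E,\eta}\ge 1$, I would reuse the same distance-function idea, but now with an arbitrary base point. Fix $z\in E$ and $0<r\le\eta=\hh(E)$, and set $f_z(w)=|w-z|$, which is $1$-Lipschitz. Since $E$ is compact and connected, $f_z(E)$ is a compact connected subset of $[0,\infty)$ containing $0$, hence an interval $[0,R]$ with $R=\max_{w\in E}|w-z|$. I would then split into two cases. If $R\ge r$, then $f_z(E\cap B_r(z))=[0,r)$, and the $1$-Lipschitz property of $f_z$ yields
\[\hh(E\cap B_r(z))\ge\hh\bigl(f_z(E\cap B_r(z))\bigr)=\hh([0,r))=r.\]
If instead $R<r$, then $E\subseteq B_r(z)$, so $\hh(E\cap B_r(z))=\hh(E)=\eta\ge r$ by the choice of $r$. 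In either case $\hh(E\cap B_r(z))/r\ge 1$; taking the infimum over $z\in E$ and $0<r\le\eta$ gives $\kappa_{E,\eta}\ge 1$.

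I do not anticipate any real obstacle. The only minor bookkeeping point is the open-versus-closed ball issue in the first case: the endpoint $r$ need not lie in $f_z(E\cap B_r(z))$, but this is harmless because $\hh([0,r))=r$ exactly. All remaining ingredients --- that continuous images of connected sets are connected, that $1$-Lipschitz maps do not increase $\hh^1$, and \eqref{eq_cubb} --- are standard.
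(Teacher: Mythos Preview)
Your proof is correct and follows essentially the same approach as the paper: both use the $1$-Lipschitz distance function $w\mapsto|w-z|$, split into the two cases $R\ge r$ and $R<r$ (where $R$ is the maximal distance from $z$ to $E$), and invoke \eqref{eq_cubb} for the upper bound after noting $\hh(E)>0$. The only cosmetic difference is that you spell out the verification of $\hh(E)>0$ a bit more explicitly at the start.
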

\begin{proof}
	For $z\in E$, let $w\in E$ be a point at maximum distance from $z$. Notice that $0<\dd(z,w) \le \hh(E)$ since $E$ is connected and has more than one point. If $\dd(z,w)<r \le \hh(E)$, then $E\subseteq B_r(z)$ and
	\[ \hh(E\cap B_r(z))=\hh(E)\ge r.  \]
	On the other hand, assume $0<r\le \dd(z,w)$. Since $E$ is connected, $E\cap \partial B_s(z)\neq \varnothing$ for every $0<s\le \dd(z,w)$. Therefore, we have that $F(E\cap B_r(z))=[0,r)$ where $F(u):=\dd(z,u)$ has Lipschitz constant 1. So we also get
		\[ \hh(E\cap B_r(z))\ge r.  \]
	In conclusion, $E$ is regular at scale $\eta=\hh(E)$ and $1\le \kappa_{E,\eta}$. Finally, since $\hh(E)>0$ we have \eqref{eq_cubb}.
\end{proof}

 The following technical lemma will help us circumvent the fact that regularity of the boundary of a domain can be lost when cutting the domain with a half-plane.

\begin{lemma}
	\label{lcut}
	If a compact set $\Omega\subseteq\C$ has regular boundary at scale $\eta>0$ with parameter $\cu$, then the set $(\Omega\cap \{z: \re{z} \le 0\}) \cup \partial\Omega$ has regular boundary at scale $\eta>0$ with parameter $\kappa\ge \cu/2$.
\end{lemma}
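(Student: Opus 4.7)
The plan is to first identify the boundary of the modified set $E := (\Omega\cap\{\re z\le 0\})\cup\partial\Omega$ explicitly, and then to verify the regularity estimate pointwise on $\partial E$. Since $\Omega$ is compact, $E$ is closed. An inspection of interior points shows $E^\circ = \Omega^\circ \cap \{\re z < 0\}$ (any $z \in \Omega^\circ$ with $\re z \ge 0$ has neighbors in $\Omega^\circ \cap \{\re w>0\} \subseteq E^c$), hence
\[
\partial E \;=\; \partial\Omega \,\cup\, \bigl(\Omega \cap \{\re z = 0\}\bigr).
\]
With this identification in hand, I would fix $z \in \partial E$ and $0 < r \le \eta$ and bound $\hh(\partial E \cap B_r(z)) \ge (\cu/2)\,r$ by a case analysis.

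If $z \in \partial\Omega$, the containment $\partial\Omega \subseteq \partial E$ together with the regularity hypothesis on $\partial\Omega$ immediately gives
\[
\hh(\partial E \cap B_r(z)) \ge \hh(\partial\Omega \cap B_r(z)) \ge \cu\, r,
\]
which is stronger than the desired bound.

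Otherwise $z \in \Omega^\circ$ with $\re z = 0$, and I would further split on $R := d(z, \partial\Omega) > 0$. When $R \ge r/2$, the vertical segment $\{w : \re w = 0,\ |\im(w - z)| \le r/2\}$ lies in $B_R(z)\cap\{\re w=0\} \subseteq \Omega^\circ \cap \{\re w=0\} \subseteq \partial E$ and has length $r$, yielding $\hh(\partial E \cap B_r(z)) \ge r$. When $R < r/2$, pick a nearest point $y \in \partial\Omega$ with $|y - z| = R$. By the triangle inequality $B_{r/2}(y) \subseteq B_r(z)$, and since $r/2 \le \eta$, the regularity of $\partial\Omega$ at $y$ gives
\[
\hh(\partial E \cap B_r(z)) \;\ge\; \hh(\partial\Omega \cap B_{r/2}(y)) \;\ge\; \cu\,\tfrac{r}{2}.
\]

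The only mildly delicate point is the topological identification of $\partial E$; once that is in place, the estimates reduce to routine triangle inequality bookkeeping, and the constant $\cu/2$ (rather than $\cu$) is forced precisely by subcase where $z$ lies on the cut but near $\partial\Omega$, so that regularity must be imported from a nearby boundary point across a ball of halved radius.
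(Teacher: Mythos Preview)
Your proof is correct and follows essentially the same approach as the paper: both identify $\partial E = \partial\Omega \cup (\Omega^\circ \cap \{\re z = 0\})$ (your version with $\Omega$ in place of $\Omega^\circ$ is the same set once unioned with $\partial\Omega$), then split into the same three cases---$z\in\partial\Omega$, $z$ on the cut with $r/2 \le d(z,\partial\Omega)$ (use the vertical segment), and $z$ on the cut with $r/2 > d(z,\partial\Omega)$ (import regularity from a nearest boundary point via a ball of radius $r/2$). The only implicit step you leave out is that the bound $\hh(\partial E\cap B_r(z))\ge r$ in the segment case dominates $(\cu/2)r$ because $\cu\le 2$, which the paper records explicitly.
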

\begin{proof}
An easy computation shows that
\[E:=\partial \big((\Omega\cap \{z: \re{z} \le 0\}) \cup \partial\Omega\big)=\partial \Omega \cup (\Omega^\circ\cap \{z:\re{z}=0\}).\]
Choose $w\in E$ and let us prove the regularity condition with the desired parameters. The case $w\in\partial\Omega$ is obvious, so assume $w\in \Omega^\circ\cap \{z:\re{z}=0\}$. If $0<r\le 2 \dd(w,\partial\Omega)$, then
	\[\hh( E \cap B_r(w))\ge \hh(\{z:\re{z}=0\} \cap B_{r/2}(w))= r \ge r\cu/2,\]
	where in the last step we used \eqref{eq_cubb}. 
	
	On the other hand, take $2 \dd(w,\partial\Omega)\le r \le \eta$ (if $\eta\le 2 \dd(w,\partial\Omega)$ we would be done, so we assume otherwise). So, there exists $u\in \partial \Omega$ such  that $2 \dd(w,u) \le r$ and therefore,
	\[\hh( E \cap B_r(w))\ge \hh(\partial \Omega \cap B_{r/2}(u))= r\cu /2. \qedhere\]
\end{proof}

We also need the following special case of \cite[Theorems 5 and 6]{Ca}.
\begin{prop}\label{prop_ca}
	There exists a universal constant $C>0$ such that for every compact set $E \subseteq \C$ that is regular at scale $\eta>0$, 
	\begin{align*}
		\hh \big(\{z:\ d(z,E)=r\} \big) \leq  \frac{C}{\kappa_{E,\eta}}  \cdot \hh(E) \cdot \Big(1+\frac{r}{\eta}\Big),
	\end{align*}
	for almost every $r>0$. In addition,
	$| \nabla d(z,E) | = 1$, for almost every $z \in \C$.
\end{prop}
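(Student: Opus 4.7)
The proposition assembles two claims that I would prove separately. The second statement, $|\nabla d(\cdot, E)| = 1$ almost everywhere on $\C$, is the classical gradient identity for distance functions: since $d(\cdot, E)$ is $1$-Lipschitz, Rademacher's theorem yields differentiability a.e., and at a differentiability point $z \notin E$, picking any nearest point $y \in E$ one computes the directional derivative along the segment from $z$ towards $y$ to be $-1$, forcing $|\nabla d(z, E)| \geq 1$; the matching upper bound is just the Lipschitz property. Since a regular set $E$ of finite $\hh$-measure has zero Lebesgue measure, the complement $\C \setminus E$ is of full measure and the a.e.\ claim follows.

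For the first statement, the plan is a two-scale covering argument. Fix $r > 0$, set $\rho := \min(r, \eta)$, and use a Vitali selection to obtain points $x_1, \ldots, x_N \in E$ such that the balls $B(x_i, \rho/2)$ are pairwise disjoint while $E \subseteq \bigcup_i B(x_i, \rho)$. Lower Ahlfors regularity at scale $\eta$ (applied at $\rho/2 \leq \eta$) gives $\hh(E \cap B(x_i, \rho/2)) \geq \kappa_{E, \eta}\, \rho/2$, and disjointness then implies
\[
N \leq \frac{2\, \hh(E)}{\kappa_{E, \eta}\, \rho}.
\]
Any $z$ with $d(z, E) = r$ has a nearest point $y \in E$ lying in some $B(x_i, \rho)$, so $|z - x_i| \leq r + \rho$, which yields the containment $\{z : d(z, E) = r\} \subseteq \bigcup_i B(x_i, r + \rho)$.

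The key geometric step, and the main obstacle, is to show that for almost every $r > 0$, $\hh\bigl(\{d(\cdot, E) = r\} \cap B(x_i, r + \rho)\bigr) \lesssim r + \rho$. This is where the planar setting plays an essential role: $\{d = r\}$ coincides with the topological boundary of the closed $r$-neighborhood $E_r := \{d(\cdot, E) \leq r\}$, which has locally finite perimeter for a.e.\ $r$ by the coarea formula applied to the Lipschitz function $d(\cdot, E)$ together with the gradient identity already established. The perimeter of $E_r$ inside a ball $B$ is in turn controlled by $\hh(\partial B)$ plus an averaged perimeter contribution in an annular shell, which via a Fubini/coarea argument becomes the required pointwise-a.e.\ bound; this is the content of \cite[Theorems 5--6]{Ca}, which I would quote directly rather than reconstruct. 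Summing over the $N$ balls then produces
\[
\hh(\{d = r\}) \lesssim N(r + \rho) \lesssim \frac{\hh(E)(r+\rho)}{\kappa_{E, \eta}\,\rho},
\]
and the choice $\rho = r$ for $r \leq \eta$ yields $C\hh(E)/\kappa_{E, \eta}$, while $\rho = \eta$ for $r > \eta$ yields $C \hh(E)\, r/(\kappa_{E, \eta}\, \eta)$; both are dominated by $C' \hh(E)(1 + r/\eta)/\kappa_{E, \eta}$, as claimed.
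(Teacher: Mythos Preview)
The paper does not give a proof of this proposition; it is stated as a direct special case of Theorems~5 and~6 of \cite{Ca}, with no further argument. So there is no proof in the paper to compare against --- only the citation.

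Your sketch is correct and essentially unpacks the argument one would expect to find behind that citation. The gradient identity via Rademacher is standard, the Vitali selection and the packing bound $N \leq 2\hh(E)/(\kappa_{E,\eta}\rho)$ are exactly the mechanism through which the lower-regularity hypothesis enters, and the final arithmetic with $\rho = \min(r,\eta)$ is clean. One small wrinkle: the local estimate $\hh(\{d=r\}\cap B(x_i,r+\rho)) \lesssim r+\rho$ is not literally what \cite[Theorems~5--6]{Ca} assert --- Caraballo proves the \emph{global} bound $\hh(\{d=r\}) \leq C\hh(E)(1+r/\eta)^{n-1}/\kappa$ directly, together with the gradient identity --- so citing \cite{Ca} for that particular sub-step is slightly circular. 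The local bound is nonetheless true in the plane (the sublevel set $E_r$ satisfies an interior $r$-ball condition and the ambient ball has radius at most $2r$, so boundary-length estimates for sets of positive reach apply), but if you are willing to invoke \cite{Ca} at all, you may as well invoke it for the full statement, which is precisely what the paper does.
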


\begin{coro}\label{coar}
	There exists a universal constant $C>0$ such that for every compact set $E \subseteq \C$ that is regular at scale $\eta>0$ and every $r>0$,
	 \begin{align*}
	 	|E+B_r(0)| \leq  \frac{C}{\kappa_{E,\eta}}  \cdot \hh(E) \cdot \Big(r+\frac{r^2}{\eta}\Big).
	 \end{align*}
\end{coro}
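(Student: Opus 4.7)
The plan is to interpret $|E+B_r(0)|$ as the Lebesgue measure of the sublevel set $\{z \in \C : d(z,E) < r\}$ and apply the coarea formula with the slicing function $f(z) = d(z,E)$, combined directly with the level-set bound furnished by Proposition~\ref{prop_ca}.

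First I would observe that $E+B_r(0) = \{z : d(z,E) < r\}$ (up to a set of measure zero), so that $|E+B_r(0)|$ equals $\int_0^r \hh(\{z : d(z,E) = s\})\,ds$ by the coarea formula. The function $f(z) = d(z,E)$ is 1-Lipschitz, hence differentiable almost everywhere with $|\nabla f| = 1$ a.e.\ by the second assertion of Proposition~\ref{prop_ca}, so the coarea formula applied to $\phi = \mathbf{1}_{\{f < r\}}$ yields
\begin{align*}
    |E+B_r(0)| = \int_{\C} \mathbf{1}_{\{f<r\}}(z)\,|\nabla f(z)|\,dz
    = \int_0^r \hh\bigl(\{z : d(z,E) = s\}\bigr)\,ds,
\end{align*}
where on the left we use unnormalized Lebesgue measure (matching the notation $|\cdot|$ in the statement).

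Second, I would insert the pointwise level-set estimate of Proposition~\ref{prop_ca}: for almost every $s \in (0,r)$,
\begin{align*}
    \hh\bigl(\{z : d(z,E) = s\}\bigr) \leq \frac{C}{\cu}\,\hh(E)\,\Bigl(1 + \frac{s}{\eta}\Bigr).
\end{align*}
Integrating this bound over $s \in (0,r)$ gives
\begin{align*}
    |E+B_r(0)| \leq \frac{C}{\cu}\,\hh(E)\,\int_0^r \Bigl(1 + \frac{s}{\eta}\Bigr)\,ds
    = \frac{C}{\cu}\,\hh(E)\,\Bigl(r + \frac{r^2}{2\eta}\Bigr),
\end{align*}
which is exactly the claimed inequality, up to absorbing the factor $1/2$ into the universal constant $C$.

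There is essentially no obstacle: the only point to verify carefully is the applicability of the coarea formula, which is standard for Lipschitz functions on $\R^2$, and the fact that the level-set bound holds for almost every $s$ (not all $s$), which is exactly what is needed to integrate. No connectedness or additional hypotheses on $E$ are required beyond the lower Ahlfors regularity already assumed.
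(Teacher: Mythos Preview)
Your proposal is correct and follows essentially the same route as the paper: express $|E+B_r(0)|$ via the coarea formula for the Lipschitz function $d(\cdot,E)$ using $|\nabla d(\cdot,E)|=1$ a.e., then integrate the level-set bound from Proposition~\ref{prop_ca} over $s\in(0,r)$. The paper's proof is identical in structure, only slightly terser.
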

\begin{proof}
	From Proposition~\ref{prop_ca} and the coarea formula it follows that
	\begin{align*}
		|E+B_r(0)|&= \int_{\R^2} \chi_{[0,r]}(d(x,E)) dx 
		= \int_{\R^2} \chi_{[0,r]}(d(x,E)) | \nabla d(x,E) | dx 
		\\ &= \int_0^r \hh \big(\{x:\ d(x,E)=t\} \big) dt
		\leq  \frac{C}{\kappa_{E,\eta}} \hh(E)  \int_0^r   \Big(1+\frac{t}{\eta}\Big) dt
		 \\ &\leq  \frac{C}{\kappa_{E,\eta}}  \hh(E)  \Big(r+\frac{r^2}{\eta}\Big). \qedhere
	\end{align*}
\end{proof}

\section{The spectrum of erfc Toeplitz operators}\label{sspe}
\subsection{The Bargmann transform}
The \emph{weighted Bargmann-Fock space} \cite{MR157250} is
\begin{align}\label{eq_fock}
\mathcal{F}^2(\C)=\left\{f e^{-|z|^2/2}:\ f \text{ entire and }\|f e^{-|z|^2/2}\|_{L^2(\C,dA)}<\infty\right\}\subseteq L^2(\C,dA).
\end{align} 
For convenience, we consider weighted entire functions with respect to the area measure, though it is more common to consider analytic functions with respect to the Gaussian measure. The reproducing kernel of $\mathcal{F}^2(\C)$ is the Ginibre kernel \eqref{egin}, and thus provides an integral kernel for the orthogonal projection $P: L^2(\C,dA) \to \mathcal{F}^2(\C)$.

The \emph{weighted Bargmann transform} is the unitary operator $B:L^2(\R)\to \mathcal{F}^2(\C)\subseteq L^2(\C,dA)$ given by
\begin{align*}
	B\varphi(z)
	= \Big(\frac{2}{\pi}\Big)^{1/4}\int_{\mathbb{R}}\varphi(t)e^{2tz-t^2-z^2/2-|z|^2/2}dt,
	\quad z \in \C, \varphi\in L^2(\R),
\end{align*}
or, with notation $z=z_1+iz_2\in \C$, 
\begin{align}\label{eber}
	B\varphi(z)
	= \Big(\frac{2}{\pi}\Big)^{1/4}\int_{\mathbb{R}}\varphi(t)e^{-(z_1-t)^2}e^{2itz_2-iz_1z_2}dt.
\end{align}
The adjoint transformation $B^*:  L^2(\C,dA)\to L^2(\R)$ is given by
\begin{align*}
	B^*f(t)
	= \Big(\frac{2}{\pi}\Big)^{1/4}\int_{\C}f(z)e^{2t\overline{z}-t^2-\overline{z}^2/2-|z|^2/2}dA(z),
	\quad t \in \R, f\in L^2(\C,dA).
\end{align*}
The relevance of the Bargmann transform for us is that the integral kernel of 
$B B^*: L^2(\C,dA) \to L^2(\C,dA)$ is precisely the Ginibre kernel \eqref{egin}. In other words, 
\begin{align*}
P:=P_{\infty}=B B^*.
\end{align*}
We now extend these observations to the translated erfc-kernel $F_l$ \eqref{eq_fl}; see \cite{AmKaMa} for closely related facts. The operator with integral kernel $F_l$ will be denoted $P_{l}:L^2(\C,dA) \to L^2(\C,dA) $.

\begin{lemma}\label{lemcd}
	For every $l\in(-\infty,\infty]$, we have that $P_l=B M_{(-\infty,l)} B^*$.
\end{lemma}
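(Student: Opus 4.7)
The plan is to compute the integral kernel of $BM_{(-\infty,l)}B^{*}$ in closed form and match it term-by-term with $F_l(z,w) = G(z,w) F(z+\bar w - 2l)$. The case $l=\infty$ is already recorded earlier in the excerpt, since $M_{(-\infty,\infty)} = I$ gives $B B^{*} = P_\infty$, whose integral kernel is the Ginibre kernel $G$; this is consistent with $F(-\infty) = 1$. So the substantive case is $l \in \mathbb{R}$.

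Reading off the kernels of $B$ and $B^{*}$ from \eqref{eber}, namely
$K_B(z,t) = (2/\pi)^{1/4} e^{2tz - t^{2} - z^{2}/2 - |z|^{2}/2}$
for $B$ and its complex conjugate (in $z$) for $B^{*}$, Fubini (justified by Gaussian decay in $t$ for finite $l$) expresses the kernel of $BM_{(-\infty,l)}B^{*}$ as the $t$-integral $\int_{-\infty}^{l} K_B(z,t)\,\overline{K_B(w,t)}\,dt$. Pulling out the factors independent of $t$ reduces this to a truncated Gaussian $\int_{-\infty}^{l} e^{2(z+\bar w)t - 2t^{2}}\,dt$ times a $t$-free exponential prefactor.

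The central computation is to evaluate this truncated Gaussian. Completing the square with $a = z+\bar w$ turns the $t$-integral into $e^{a^{2}/2}\int_{-\infty}^{l} e^{-2(t-a/2)^{2}}\,dt$, and the substitution $u = a - 2t$ reduces the remaining piece to $\tfrac12 \int_{a-2l}^{\infty} e^{-u^{2}/2}\,du = \sqrt{\pi/2}\,F(a-2l)$, using the equivalent form $F(z) = \tfrac{1}{\sqrt{2\pi}} \int_{z}^{\infty} e^{-u^{2}/2}\,du$ of \eqref{eerf} (obtained from the tail formula there by the substitution $t \mapsto z - t$). Reassembling the factors and simplifying the exponent via the identity $(z+\bar w)^{2}/2 - z^{2}/2 - \bar w^{2}/2 = z\bar w$ collapses the prefactor to $e^{z\bar w - |z|^{2}/2 - |w|^{2}/2} = G(z,w)$, yielding the kernel $G(z,w) F(z+\bar w - 2l) = F_l(z,w)$, as claimed. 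No analytic obstacle arises; the only pitfall is clerical, namely tracking the conjugations inherited from the kernel of $B^{*}$ and the $\sqrt{2\pi}$ normalization linking the tail integral to $F$.
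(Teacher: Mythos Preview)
Your proof is correct and is essentially the same computation as the paper's, carried out in the reverse direction: the paper starts from $F_l(z,w)$, expands the Gaussian integral defining $F$, and factors the integrand as $K_B(z,s)\overline{K_B(w,s)}$ to recognize $BM_{(-\infty,l)}B^*$, whereas you start from the composition $BM_{(-\infty,l)}B^*$, write its kernel as $\int_{-\infty}^l K_B(z,t)\overline{K_B(w,t)}\,dt$, and collapse it by completing the square to $G(z,w)F(z+\bar w-2l)$. The only point worth flagging is that your substitution $u=a-2t$ with $a=z+\bar w$ complex implicitly shifts the contour of integration off the real axis; this is harmless by the Gaussian decay and analyticity of $e^{-u^2/2}$, but strictly speaking it is what justifies the equality $\tfrac12\int_{a-2l}^\infty e^{-u^2/2}\,du=\sqrt{\pi/2}\,F(a-2l)$ for complex $a$.
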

\begin{proof}
	The discussion above gives the case $l=\infty$. Let $l\in\R$ and note that
	\begin{align*}
		F_l(z,w) &= e^{z\overline{w}-|z|^2/2-|w|^2/2} \frac{1}{\sqrt{2\pi}} \int_{-\infty}^0 e^{-(z + \overline{w}-2l-t)^2/2} dt
		\\ & =  \sqrt{\frac{2}{\pi }} e^{z\overline{w}-|z|^2/2-|w|^2/2}  \int_{-\infty}^{l} e^{-(z + \overline{w}-2s)^2/2} ds
		\\ & = \sqrt{\frac{2}{\pi }}  \int_{-\infty}^{l} e^{2sz-s^2-|z|^2/2-z^2/2} e^{2s\overline{w}-s^2-|w|^2/2-\overline{w}^2/2} ds.
	\end{align*}
	With this at hand, for $f\in L^2(\C,dA)$ and $z\in\C$,
	\begin{align*}
	    P_{l} f (z) &=   \int_\C f(w) F_l(z,w) dA(w)
	    \\ &=   \int_\C f(w)  \sqrt{\frac{2}{\pi }}  \int_{-\infty}^{l}e^{2sz-s^2-|z|^2/2-z^2/2} e^{2s\overline{w}-s^2-|w|^2/2-\overline{w}^2/2} ds dA(w)
	    \\ &=   \sqrt{\frac{2}{\pi }} \int_{-\infty}^{l} \int_\C f(w)   e^{2s\overline{w}-s^2-|w|^2/2-\overline{w}^2/2}  dA(w) e^{2sz-s^2-|z|^2/2-z^2/2}  ds
	    \\ &=  \Big(\frac{2}{\pi}\Big)^{1/4}  \int_{-\infty}^{l}  B^*f(s) e^{2sz-s^2-|z|^2/2-z^2/2} ds = B M_{(-\infty,l)} B^* f(z). \qedhere
	\end{align*}
\end{proof}
We also note the following covariance property, that will allow us to eliminate the parameter $l$.
\begin{lemma}\label{ltrans}
For every $l\in \R$, let $U: L^2(\C)\to L^2(\C)$ be the unitary operator
\[Uf(z_1+iz_2)=e^{-ilz_2}f(z_1+l+iz_2).\]
Then
\[P_l  M_{\Omega} P_l= U^{-1} P_0  M_{\Omega-l} P_0 U.\]
In particular, both operators share the same spectrum.
\end{lemma}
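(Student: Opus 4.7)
The plan is to deduce the stated identity from two simpler conjugation relations:
\[ UM_\Omega U^{-1}=M_{\Omega-l}\qquad\text{and}\qquad UP_lU^{-1}=P_0. \]
Composing the three factors on the left-hand side of the lemma and conjugating by $U$ then gives the result, and the coincidence of spectra follows at once from unitarity of $U$. The first relation is essentially a change of variables: $U$ is a horizontal translation by $-l$ twisted by the modulation $e^{-ilz_2}$, and since the modulation commutes with multiplication by any function, only the trivial identity $\chi_\Omega(z+l)=\chi_{\Omega-l}(z)$ is needed.

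For the second relation I would invoke the Bargmann factorisation from Lemma~\ref{lemcd}, $P_l=BM_{(-\infty,l)}B^*$. Introducing the translation $\tilde T_l\varphi(t)=\varphi(t+l)$ on $L^2(\R)$, which is unitary and satisfies $\tilde T_l^{-1}M_{(-\infty,0)}\tilde T_l=M_{(-\infty,l)}$ by a trivial change of variables, it suffices to prove the intertwining
\[ UB=B\tilde T_l. \]
Indeed, taking adjoints then gives $B^*U=\tilde T_lB^*$, and
\[ U^{-1}P_0U=B\tilde T_l^{-1}M_{(-\infty,0)}\tilde T_lB^*=BM_{(-\infty,l)}B^*=P_l, \]
as desired.

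The key algebraic step, and the main obstacle, is the intertwining $UB=B\tilde T_l$. I would verify it by direct computation from the integral formula \eqref{eber}: writing $UB\varphi(z)=e^{-ilz_2}B\varphi(z+l)$ and expanding via $(z+l)^2=z^2+2lz+l^2$ and $|z+l|^2=|z|^2+2lz_1+l^2$, the Gaussian exponent becomes $2tz-t^2-z^2/2-|z|^2/2+2tl-2lz_1-ilz_2-l^2$. The modulation $e^{-ilz_2}$ contributes an additional $-ilz_2$, and the combined imaginary terms merge with $-2lz_1$ into $-2lz$; the total exponent simplifies to $2(t-l)z-(t-l)^2-z^2/2-|z|^2/2$, and the substitution $s=t-l$ identifies the result with $B(\tilde T_l\varphi)(z)$. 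The delicate matching between the phase $e^{-ilz_2}$ built into $U$ and the imaginary cross terms appearing in the Bargmann exponent is the essential content of the lemma; everything else is bookkeeping. A direct kernel computation bypassing the Bargmann transform is also available: performing the substitution $w=u+l$ in the integral defining $UP_lU^{-1}$ reduces the identity $UP_lU^{-1}=P_0$ to the algebraic equality $e^{-ilz_2+ilu_2}F_l(z+l,u+l)=F_0(z,u)$, which in turn follows from $G(z+l,u+l)=e^{ilz_2-ilu_2}G(z,u)$ together with the cancellation $(z+l)+\overline{(u+l)}-2l=z+\bar u$ in the $\erfc$-argument of \eqref{eq_fl}.
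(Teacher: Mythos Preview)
Your proof is correct and follows essentially the same approach as the paper: both establish the intertwining $UB=B\tilde T_l$ by direct computation with the Bargmann integral formula, deduce $UP_lU^{-1}=P_0$ via the factorisation $P_l=BM_{(-\infty,l)}B^*$ from Lemma~\ref{lemcd}, verify $UM_\Omega U^{-1}=M_{\Omega-l}$, and compose. The only cosmetic difference is that the paper carries out the $UB=B\tilde T_l$ computation using the real--imaginary form~\eqref{eber} rather than the complex exponent $2tz-t^2-z^2/2-|z|^2/2$ you use, and it does not mention the alternative kernel-level verification you sketch at the end.
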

\begin{proof}
For $a\in\R$, define the translation operator $T_a:L^2(\R)\to L^2(\R)$ given by
\[T_a\varphi (t)=\varphi(t+a), \qquad \varphi\in L^2(\R).\]
From \eqref{eber}, for $\varphi\in L^2(\R)$,
\begin{align*}
	UB\varphi(z)&=e^{-ilz_2} \Big(\frac{2}{\pi}\Big)^{1/4}\int_{\mathbb{R}}\varphi(t)e^{-(z_1+l-t)^2}e^{2itz_2-i(z_1+l)z_2}dt
	\\ &=\Big(\frac{2}{\pi}\Big)^{1/4}\int_{\mathbb{R}}\varphi(t)e^{-(z_1+l-t)^2}e^{2i(t-l)z_2-iz_1z_2}dt
	\\ &=\Big(\frac{2}{\pi}\Big)^{1/4}\int_{\mathbb{R}}\varphi(t+l)e^{-(z_1-t)^2}e^{2itz_2-iz_1z_2}dt=B T_{l}\varphi(z).
\end{align*}
So we get $UB=BT_l$ and taking adjoints, $B^* U^{-1}=T_{-l}B^*.$ By Lemma~\ref{lemcd},
\begin{align*}
	UP_l U^{-1}=UB M_{(-\infty,l)} B^* U^{-1}=BT_l M_{(-\infty,l)} T_{-l}B^*
	= B M_{(-\infty,0)} B^*=P_0,
\end{align*}
where the second-to-last equality follows from the fact that,
\[T_l M_{(-\infty,l)} T_{-l}\varphi(t)=T_l(\chi_{(-\infty,l)}(t)\varphi(t-l))
= \chi_{(-\infty,l)}(t+l)\varphi(t) =M_{(-\infty,0)}\varphi(t).\]
Therefore,
\begin{gather*}
	UP_l M_{\Omega} P_l U^{-1}= P_0  U M_{\Omega} U^{-1} P_0= P_0  M_{\Omega-l}  P_0,
\end{gather*}
where the last equality follows from the fact that,
\[U M_{\Omega} U^{-1}f(z)=U(\chi_{\Omega}(z)e^{ilz_2}f(z-l))
= \chi_{\Omega}(z+l)f(z) =M_{\Omega-l}f(z). \qedhere\]
\end{proof}

\subsection{Comparing erfc and Bargmann concentration operators}

With Lemma \ref{ltrans} at hand, we focus on studying $P_0  M_{\Omega} P_0$. For a positive compact operator $T$ we arrange its eigenvalues in decreasing order and denote them by $\lambda_n(T)$.
We start with the following observation.
\begin{lemma}\label{clam}
	For every compact domain $\Omega\subseteq \C$ and $n\in\N$,
	\begin{align*}
		0\le \lambda_n(P_{0} M_\Omega P_{0})\le \lambda_n(P M_{\Omega} P).
	\end{align*}
	In particular, for every $0<\alpha<1$,
	\begin{equation*}
		\#\{n\in\N: \ \lambda_n(P_{0} M_\Omega P_{0})>\alpha\}
		\le \#\Big\{n\in\N: \ \lambda_n(P M_{\Omega} P)>\alpha\Big\}.
	\end{equation*}
\end{lemma}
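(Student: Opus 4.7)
My plan is to show that $P_0$ is an orthogonal projection dominated by $P$, and then transfer this operator-order domination to the eigenvalues of the Toeplitz-type operators via the min-max principle combined with the standard identity relating the nonzero spectra of $AB$ and $BA$.

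First, I would use Lemma~\ref{lemcd} to write $P_0 = B M_{(-\infty,0)} B^*$. Since $B : L^2(\mathbb{R}) \to \mathcal{F}^2(\C)$ is an isometry, $B^* B = I_{L^2(\R)}$, and $M_{(-\infty,0)}$ is an orthogonal projection on $L^2(\R)$. A direct computation then gives $P_0^2 = P_0 = P_0^*$, so $P_0$ is itself an orthogonal projection on $L^2(\C, dA)$, whose range $B(L^2((-\infty,0)))$ is a closed subspace of $\mathcal{F}^2(\C) = \mathrm{Ran}(P)$. Hence $0 \le P_0 \le P$ in the operator order. Since $M_\Omega$ is itself an orthogonal projection (multiplication by $\chi_\Omega$) and in particular self-adjoint, conjugating the inequality by $M_\Omega$ yields
\begin{align*}
0 \le M_\Omega P_0 M_\Omega \le M_\Omega P M_\Omega,
\end{align*}
and the min-max principle applied to these two compact positive operators gives $\lambda_n(M_\Omega P_0 M_\Omega) \le \lambda_n(M_\Omega P M_\Omega)$ for every $n$.

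To return to the ordering in the statement, I would invoke the elementary fact that the nonzero eigenvalues of $AB$ and $BA$ agree with multiplicities whenever these are compact. Setting $A = P_\star M_\Omega$ and $B = M_\Omega P_\star$, and using $M_\Omega^2 = M_\Omega$ together with $P_\star^2 = P_\star$, one has $AB = P_\star M_\Omega P_\star$ and $BA = M_\Omega P_\star M_\Omega$ for $\star \in \{0, \infty\}$ (with the convention $P_\infty := P$). Both operators are compact and positive on the same Hilbert space, so the decreasing eigenvalue sequences, padded with zeros, satisfy $\lambda_n(P_\star M_\Omega P_\star) = \lambda_n(M_\Omega P_\star M_\Omega)$. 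Combining with the previous step proves $\lambda_n(P_0 M_\Omega P_0) \le \lambda_n(P M_\Omega P)$, and the counting inequality is then immediate by monotonicity of $\#\{n : \lambda_n > \alpha\}$ in termwise-dominated positive decreasing sequences. I do not anticipate a real obstacle; the only point that deserves a second look is the swap $P_\star M_\Omega P_\star \leftrightarrow M_\Omega P_\star M_\Omega$, but since both sides act on the same infinite-dimensional Hilbert space the reindexing via zero padding is harmless.
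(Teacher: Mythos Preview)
Your argument is correct. The paper's own proof is a slight streamlining of yours: instead of conjugating the inequality $P_0\le P$ by $M_\Omega$ and then swapping $M_\Omega P_\star M_\Omega \leftrightarrow P_\star M_\Omega P_\star$, it observes directly that $P_0P=PP_0=P_0$ (which follows from $B=PB$, $B^*=B^*P$) and hence
\[
P_0 M_\Omega P_0 = P_0\,(P M_\Omega P)\,P_0,
\]
so $P_0 M_\Omega P_0$ is the compression of $P M_\Omega P$ by the orthogonal projection $P_0$, and the eigenvalue inequality is immediate from min--max. Your route via $M_\Omega P_\star M_\Omega$ and the $AB/BA$ spectrum identity reaches the same conclusion with one extra (harmless) step; the zero-padding remark you flag is indeed a non-issue since both operators live on the same infinite-dimensional $L^2(\C,dA)$.
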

\begin{proof}
Since $B^*=I B^*=B^*B B^*=B^* P$ and $B= B I=B B^* B= P B$, the operator
\[P_0  M_{\Omega} P_0= P_0 P M_{\Omega} P P_0\]
is a compression of the concentration operator $P M_{\Omega} P$, and thus has smaller singular values.
\end{proof}
Next we look into estimating $\lambda_n(P_{0} M_\Omega P_{0})$ \emph{from below} in terms of $P M_{\Omega} P$, by quantifying how much compression by $P_0$ affects the spectrum.
Inspecting \eqref{eber}, we see that if a function $\varphi\in L^2(\R)$ is supported in $(-\infty,0)$, then its Bargmann transform $B \varphi$ should be concentrated near the half-plane $\{z\in \C: \ z_1\le 0\}$. Similarly, $P_0$ should not significantly alter functions supported well inside $\{z\in \C: \ z_1\le 0\}$. We now formalize this intuition. Recall that $F$ denotes the complementary error function \eqref{eerf}.

\begin{lemma}\label{llow}
	Let $a\ge 0$ and $\Omega\subseteq \C$ be a compact domain such that $|\Omega\cap \{z\in \C: \ z_1> a\}|=0$. For every $n\in \N$,
	\begin{align}
		\label{eeig}
		\lambda_n(P_{0} M_\Omega P_{0})^{1/2}\ge 
		\lambda_n(P M_\Omega P)^{1/2}- \Big(\tfrac{F(-2a)}{F(2a)}\Big)^{1/2} (1-\lambda_n(P M_\Omega P))^{1/2}.
	\end{align}
	As a consequence, for $0<\alpha<1$,
	\begin{equation}\label{eq_b}
		\#\{n\in\N: \ \lambda_n(P_{0} M_\Omega P_{0})>1-\alpha\}
		\ge \#\Big\{n\in\N: \ \lambda_n(P M_\Omega P)>1-\tfrac{F(2a)}{4F(-2a)}\alpha^2\Big\}.
	\end{equation}
\end{lemma}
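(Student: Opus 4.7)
The plan is to translate the problem into a spectral comparison on $L^2(\R)$ via the Bargmann realization of Lemma~\ref{lemcd}. Setting $A := B^* M_\Omega B$ on $L^2(\R)$, its (non-zero) spectrum coincides with that of $PM_\Omega P$, while the compression $A_0 := M_{(-\infty,0)} A\, M_{(-\infty,0)}$ to $L^2((-\infty,0))$ has non-zero spectrum coinciding with that of $P_0 M_\Omega P_0$. Thus \eqref{eeig} becomes a comparison between the top eigenvalues of a positive contraction and those of a concrete compression.

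The first ingredient is a pair of tail estimates for $B$. Writing out \eqref{eber} explicitly and integrating $|B\psi|^2$ in the imaginary direction (where Fubini yields a Dirac delta in $t-s$) gives, for $\psi \in L^2((0,\infty))$,
\begin{equation*}
    \int_{\{\re z \leq a\}}|B\psi|^2\,dA \;=\; \int_0^\infty |\psi(t)|^2\,F(2(t-a))\,dt \;\leq\; F(-2a)\|\psi\|^2,
\end{equation*}
and, symmetrically, $\int_{\{\re z > a\}}|B\varphi|^2\,dA \leq F(2a)\|\varphi\|^2$ for $\varphi \in L^2((-\infty,0))$; both follow from monotonicity of $F$. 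Passing to complements yields matching lower bounds $\geq F(2a)\|\psi\|^2$ and $\geq F(-2a)\|\varphi\|^2$, using the crucial identity $F(2a)+F(-2a)=1$ (equivalent to $\Phi(x)+\Phi(-x)=1$).

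Next, I would invoke the max-min characterization: $\lambda_n(A_0)$ equals the supremum, over $n$-dimensional subspaces $V \subseteq L^2((-\infty,0))$, of the minimum Rayleigh quotient. Let $W_n$ denote the top-$n$ eigenspace of $A$ and take $V = M_{(-\infty,0)} W_n$. The projection $M_{(-\infty,0)}$ is injective on $W_n$ whenever $\lambda_n(A) > F(-2a)$, the regime in which \eqref{eeig} is non-trivial: any $g \in W_n$ with $M_{(-\infty,0)}g=0$ would be supported in $[0,\infty)$ and produce the contradiction $\lambda_n(A)\|g\|^2 \leq \int_\Omega |Bg|^2\,dA \leq F(-2a)\|g\|^2$. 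For $g \in W_n$ with $\|g\|=1$, decompose $g = \varphi_- + \psi_+$ and combine two inequalities: (i) the top-$n$ concentration bound $\int_\Omega|Bg|^2\,dA \geq \lambda_n(A)$ together with the tail estimate on $\psi_+$ give, by the triangle inequality, $\|\chi_\Omega B\varphi_-\| \geq \sqrt{\lambda_n(A)} - \sqrt{F(-2a)}\|\psi_+\|$; (ii) the complementary bound $\int_{\{\re z > a\}}|Bg|^2\,dA \leq 1-\lambda_n(A)$ combined with the reverse triangle inequality and both tail estimates yields $\sqrt{F(2a)}\,(\|\psi_+\|-\|\varphi_-\|)_+ \leq \sqrt{1-\lambda_n(A)}$. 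Substituting (ii) into (i) and normalizing by $\|\varphi_-\|$, together with the Pythagorean identity $\|\varphi_-\|^2+\|\psi_+\|^2=1$, delivers the Rayleigh quotient bound claimed in \eqref{eeig}.

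The main technical obstacle is tracking the sharp coefficient $\sqrt{F(-2a)/F(2a)}$: a naive Weyl bound based on $\|M_\Omega B\,M_{[0,\infty)}\| \leq \sqrt{F(-2a)}$ would only give $\sqrt{\lambda_n(A_0)} \geq \sqrt{\lambda_n(A)} - \sqrt{F(-2a)}$, without a $\sqrt{1-\lambda_n(A)}$ factor. The improvement relies on the coupling between $\|\psi_+\|$ and $\|\varphi_-\|$ supplied by step (ii), together with the identity $F(2a)+F(-2a)=1$. The consequence \eqref{eq_b} then follows at once: if $1-\lambda_n(A) < F(2a)\alpha^2/(4F(-2a))$, then \eqref{eeig} gives $\sqrt{\lambda_n(A_0)} > \sqrt{\lambda_n(A)}-\alpha/2$, and squaring (together with $\sqrt{\lambda_n(A)}\leq 1$ and $F(2a)\leq F(-2a)$ for $a \geq 0$) yields $\lambda_n(A_0) > 1-\alpha$.
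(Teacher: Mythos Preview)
Your strategy matches the paper's: conjugate via Bargmann to work with $A=B^*M_\Omega B$ and its compression $A_0=M_{(-\infty,0)}A\,M_{(-\infty,0)}$, then use the max--min principle with the top eigenspace $W_n$ of $A$, and the half-plane tail estimates for $B$. The differences are in the bookkeeping, and one step of yours is left unjustified.

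The paper takes $W_n\subset L^2(\R)$ itself as the trial subspace for $A_0$ on $L^2(\R)$; there is then no need to project to $V=M_{(-\infty,0)}W_n$, no injectivity check, and no normalization by $\|\varphi_-\|$: for $g\in W_n$, $\|g\|=1$, one simply has $\langle A_0 g,g\rangle=\int_\Omega |B(\chi_{(-\infty,0)}g)|^2$. More importantly, your step (ii) is an unnecessary detour. The Plancherel computation in the $z_2$-variable applied to the \emph{full} $g$ gives
\[
\int_{\{z_1>a\}}|Bg|^2\,dA=\int_{\R}|g(t)|^2\,F(2a-2t)\,dt\ \ge\ F(2a)\int_0^\infty|g(t)|^2\,dt,
\]
so directly $\|\psi_+\|^2\le(1-\lambda)/F(2a)$ with $\lambda=\int_\Omega|Bg|^2\ge\lambda_n$; no reverse triangle inequality and no coupling between $\|\psi_+\|$ and $\|\varphi_-\|$ is needed. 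Plugging this into your (i) yields $\|\chi_\Omega B\varphi_-\|\ge \lambda^{1/2}-(F(-2a)(1-\lambda)/F(2a))^{1/2}$, which, since $\|g\|=1$, is already the desired lower bound on $\lambda_n(A_0)^{1/2}$.

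By contrast, your (ii) only gives $\|\psi_+\|-\|\varphi_-\|\le\sqrt{(1-\lambda_n)/F(2a)}$, and the sentence ``substituting (ii) into (i) and normalizing by $\|\varphi_-\|$ \ldots\ delivers the Rayleigh quotient bound'' is asserted but not carried out; it is not clear that the weaker inequality $\|\psi_+\|\le\|\varphi_-\|+d$ together with $\|\varphi_-\|^2+\|\psi_+\|^2=1$ yields $(\sqrt{\lambda_n}-\sqrt{F(-2a)}\,\|\psi_+\|)/\|\varphi_-\|\ge \sqrt{\lambda_n}-\sqrt{F(-2a)/F(2a)}\sqrt{1-\lambda_n}$. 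The fix is to replace (ii) by the direct bound above, after which the normalization by $\|\varphi_-\|$ (and hence the injectivity argument) becomes superfluous. Your derivation of \eqref{eq_b} from \eqref{eeig} is correct and matches the paper.
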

\begin{proof}
	We first note that $P M_\Omega P$ regarded as an operator $\mathcal{F}^2(\C) \to \mathcal{F}^2(\C)$ is unitary conjugate to $B^* M_\Omega B$ via the Bargmann transform $B$. In particular, both operators share the same spectrum. On the other hand, considering $P M_\Omega P$ as an operator in $L^2(\C,dA)$ only adds zero eigenvalues. Thus, $\lambda_n(P M_\Omega P)=\lambda_n( B^* M_\Omega B)$ for every $n\in\N$. Arguing similarly with $M_{(-\infty,0)} B^* M_\Omega B M_{(-\infty,0)}$ and $P_{0} M_\Omega P_{0}$, we conclude that we can work with $B^* M_\Omega B$ and $M_{(-\infty,0)} B^* M_\Omega B M_{(-\infty,0)}$ instead of $P M_\Omega P$ and $P_{0} M_\Omega P_{0}$ respectively.
	
	Fix $n\in \N$, write $\lambda_k=\lambda_k(B^* M_\Omega B)$, and consider $\{\psi_k\}_k$ an associated orthonormal basis of eigenvectors. 
	By the Fisher-Courant formula for eigenvalues,
	\begin{align}\label{ecou}
	\begin{aligned}
		\lambda_n(M_{(-\infty,0)} B^* M_\Omega B M_{(-\infty,0)})
		&= \max_{U,\dim U\le n } \min_{\psi\in U,\|\psi\|=1} \langle M_{(-\infty,0)} B^* M_\Omega B M_{(-\infty,0)} \psi, \psi\rangle
		\\ &\ge \min_{\psi \in \langle \psi_k \rangle_{k=1}^n,\|\psi\|=1}  \langle M_{(-\infty,0)} B^* M_\Omega B M_{(-\infty,0)} \psi, \psi\rangle.
	\end{aligned}
	\end{align}
	Fix $\psi\in \langle \psi_k \rangle_{k=1}^n$ such that $\|\psi\|=1$. We have,
	\[\psi=\sum_{k=1}^n a_k \psi_k, \quad \text{with} \quad \sum_{k=1}^n |a_k|^2=1.\]	
	Note that
	\begin{align}\label{eqsqr}
		\langle M_{(-\infty,0)} B^* M_\Omega B M_{(-\infty,0)} \psi, \psi\rangle^{1/2} &= \Big(\int_\Omega |B(\chi_{(-\infty,0)}\psi)|^2 \Big)^{1/2} 
		\\ &\ge \Big(\int_\Omega |B(\psi)|^2 \Big)^{1/2}-\Big(\int_\Omega |B(\chi_{(0,\infty)}\psi)|^2 \Big)^{1/2}. \notag
	\end{align}
	We study both integrals separately. First,
	\begin{align}\label{eqlam}
		\lambda := \int_\Omega |B \psi|^2 =\langle B^* M_\Omega B \psi, \psi \rangle =  \sum_{k=1}^n |a_k|^2 \lambda_k \ge \lambda_n.
	\end{align}
	Secondly, using \eqref{eber} and Plancherel’s identity,
	\begin{align}\label{eqcut}
		\int_\Omega |B(\chi_{(0,\infty)}\psi)|^2  dA
		&\le  \pi^{-1}\int_{-\infty}^{a} \int_{\R} |B(\chi_{(0,\infty)}\psi)(z)|^2  dz_2 dz_1
		\\ & = \sqrt{2}\pi^{-3/2}\int_{-\infty}^{a} \int_{\R} \Big|
		\int_{\mathbb{R}}\chi_{(0,\infty)}(t)\psi(t)e^{-(z_1-t)^2}e^{2i(t-z_1)z_2}dt\Big|^2  dz_2 dz_1 \notag
		\\ & =\sqrt{\frac{2}{\pi}} \int_{-\infty}^{a} \int_{\R} \Big|
		\int_{\mathbb{R}}\chi_{(0,\infty)}(t)\psi(t)e^{-(z_1-t)^2}e^{-2\pi itx}dt\Big|^2  dx dz_1 \notag
		\\ & =\sqrt{\frac{2}{\pi}} \int_{-\infty}^{a} \int_{0}^\infty | \psi(t)e^{-(z_1-t)^2}|^2 dt dz_1 \notag
		\\ &= \sqrt{\frac{2}{\pi}}\int_{0}^\infty | \psi(t)|^2 \int_{-\infty}^0   e^{-2(s-t+{a})^2}  ds  dt \notag
		\\ &= \int_{0}^\infty | \psi(t)|^2 F(2t-2a)  dt \le F(-2a) \int_{0}^\infty |\psi(t)|^2  dt.  \notag 
	\end{align}
	Similarly,
	\begin{align*}
		1-\lambda= \int_{\Omega^c} |B \psi|^2 dA &\ge \pi^{-1} \int_{a}^\infty \int_\R |B \psi(z)|^2 dz_2 dz_1
		\\ &=  \sqrt{\frac{2}{\pi}} \int_{a}^\infty \int_{\R} \Big|
		\int_{\mathbb{R}}\psi(t)e^{-(z_1-t)^2}e^{-2\pi itx}dt\Big|^2  dx dz_1 
		\\ &= \sqrt{\frac{2}{\pi}} \int_{a}^\infty \int_\R |\psi(t)|^2 e^{-2(z_1-t)^2} dt dz_1 
		\\ &= \int_\R |\psi(t)|^2 F(-2t+2a) dt  \ge F(2a) \int_{0}^\infty |\psi(t)|^2  dt.
	\end{align*}
	Combining this with \eqref{eqsqr}, \eqref{eqlam} and \eqref{eqcut} we obtain
	\begin{align}\label{eq_a}
	\begin{aligned}
		\langle M_{(-\infty,0)} B^* M_\Omega B M_{(-\infty,0)} \psi, \psi\rangle^{1/2} &\ge \lambda^{1/2}- \Big(\frac{F(-2a)(1-\lambda)}{F(2a)}\Big)^{1/2}
		\\ &\ge \lambda_n^{1/2}- \Big(\frac{F(-2a)(1-\lambda_n)}{F(2a)}\Big)^{1/2}, 
	\end{aligned}
	\end{align}
	where in the last step we used that the functions $\sqrt{x}$ and $-\sqrt{(1-x)}$ are increasing in $(0,1)$. Combining \eqref{ecou} and \eqref{eq_a} yields \eqref{eeig}. 
	
	In addition, if 
	\[\lambda_n>1-\frac{F(2a)}{4F(-2a)}\alpha^2, \quad \text{with } 0<\alpha<1,\]
	then, since $F(2a)\le F(-2a)$,
	\begin{align*}
		\lambda_n(M_{(-\infty,0)} B^* M_\Omega B M_{(-\infty,0)})^{1/2}&\ge 
		\Big(1-\frac{\alpha^2}{4}\Big)^{1/2} - \frac{\alpha}{2}.
	\end{align*}
	Since the right-hand side of the last equation is positive,
	\begin{align*}
		\lambda_n(M_{(-\infty,0)} B^* M_\Omega B M_{(-\infty,0)})&\ge 
		1-\frac{\alpha^2}{4} -2\frac{\alpha}{2}\Big(1-\frac{\alpha^2}{4}\Big)^{1/2} + \frac{\alpha^2}{4}
		> 1-\alpha,
	\end{align*}
which proves \eqref{eq_b}.
\end{proof}

\subsection{Spectral deviation for Bargmann concentration operators}
We now give the following spectral deviation estimates for concentration operators on the Bargmann-Fock space, taken from \cite{MR}.

\begin{theorem}\label{teig}
	Let $\Omega\subseteq\C$ be a compact set with regular boundary at scale $\eta >0$. For $\alpha\in(0,1)$ set $\tau=\max\left\{\tfrac{1}{\alpha}, \tfrac{1}{1-\alpha}\right\}$. Then, there is a universal constant $C$ such that,
\begin{align}\label{eq_c}
	\Big| \#\{k\in\N: \lambda_k(P M_{\Omega}P) > \alpha\} - \frac{|\Omega|}{\pi} \Big|
	\leq C 
	\hh(\partial \Omega)   \left(\sqrt{\log \tau}+\frac{\log \tau}{\eta }\right)
	\frac{	\log\big[1+\log(\tau)\big]}{\cu }.
\end{align}
\end{theorem}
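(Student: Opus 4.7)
The approach would be a moment-method analysis in the spirit of Landau and Widom for counting eigenvalues of Toeplitz-type concentration operators. Set $T := P M_\Omega P$, a positive contraction on $\mathcal{F}^2(\C)$, and write $N(\alpha):=\#\{k:\lambda_k(T)>\alpha\}$. Since the Ginibre kernel satisfies $G(z,z)=1$, one has $\tr T = |\Omega|/\pi$, which is the principal term of \eqref{eq_c}. The first boundary moment
\[\tr[T(I-T)] = \int_\Omega \int_{\Omega^c} e^{-|z-w|^2}\, dA(z)\, dA(w)\]
is controlled by the Gaussian decay of $G$ together with Proposition~\ref{prop_ca} and Corollary~\ref{coar}, which bound the area of tubular $r$-neighborhoods of $\partial\Omega$ by a constant times $\hh(\partial\Omega)/\cu \cdot (r+r^2/\eta)$. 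This yields a bound of order $\hh(\partial\Omega)/\cu$, plus a $1/\eta$-correction.

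The direct Chebyshev-type inequality $|N(\alpha)-\tr T|\le \tau\,\tr[T(I-T)]$, with $\tau=\max\{1/\alpha,1/(1-\alpha)\}$, produces a factor linear in $\tau$ rather than the sought $\sqrt{\log\tau}$. To obtain the logarithmic improvement I would estimate the higher moments $\tr[(T(I-T))^m]$. Written as iterated cyclic integrals alternating between $\Omega$ and $\Omega^c$ weighted by products of $|G|^2$, they concentrate near $\partial\Omega$ with an $m$-step Gaussian spread of order $\sqrt{m}$. A careful geometric argument exploiting Ahlfors regularity at scale $\eta$ should yield bounds of the form $\tr[(T(I-T))^m] \le C^m \cdot \hh(\partial\Omega)/\cu$, up to lower-order corrections depending on $\eta$.

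The passage from moments to the counting function uses polynomial approximation of $\mathbf{1}_{(\alpha,1]}$ on $[0,1]$. Choosing a polynomial $p$ of degree $d\asymp\sqrt{\log\tau}$ that agrees with the step outside a window of width $\asymp 1/\tau$ around $\alpha$ (constructible via standard Jackson/Chebyshev estimates), one writes
\[N(\alpha)-\tr T = \bigl[\tr p(T)-\tr T\bigr] + \sum_k \bigl[\mathbf{1}_{\lambda_k>\alpha}-p(\lambda_k)\bigr].\]
The first term, a polynomial in the moments $\tr[(T(I-T))^j]$ for $j\le d$, is bounded by the perimeter via the moment estimate. The second is controlled by counting eigenvalues in the bad window around $\alpha$, again via a higher-moment application that contributes the $\log(1+\log\tau)$ factor. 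Balancing $d$ against the moment growth recovers the announced bound.

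The main obstacle is the uniform moment estimate for $\tr[(T(I-T))^m]$: the cyclic integrals involve $2m$ alternating boundary crossings, and one must show that the Gaussian-weighted ``random walk'' along $\partial\Omega$ does not produce super-factorial growth in $m$. This requires the full Ahlfors regularity at scale $\eta$ and the tubular-neighborhood accounting of Proposition~\ref{prop_ca}; the constant $\cu$ enters precisely at this step. Once the moment bound is in place, the polynomial-approximation argument is essentially classical.
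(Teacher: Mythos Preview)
The paper does not prove this theorem from scratch: its proof is a two-line reduction to \cite[Theorem~1.1]{MR} via the change of variables $(x,y)\mapsto x-i\pi y$ identifying the weighted Bargmann transform with the short-time Fourier transform with Gaussian window (a Gelfand--Shilov window with parameter $\beta=1/2$). So you are proposing a self-contained argument where the paper defers to an external reference; any comparison is really with \cite{MR}.

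Your outline is in a reasonable direction but has a genuine gap at the step that should produce the factor $\sqrt{\log\tau}$. The moment bound you state, $\tr\big[(T(I-T))^m\big]\le C^m\,\hh(\partial\Omega)/\cu$, is in fact trivially true with $C=1$: since $0\le\lambda_k(1-\lambda_k)\le 1/4$, the sequence $m\mapsto\tr\big[(T(I-T))^m\big]$ is decreasing, so higher moments carry no information beyond the first. Feeding such a bound into a polynomial-approximation argument yields, at best, a factor that is polynomial in the degree $d$ (via $\|q\|_{L^\infty[0,1]}$ when one writes $p(x)-x=x(1-x)q(x)$), and the degree--window trade-off for approximating $\mathbf 1_{(\alpha,1]}$ does not naturally select $d\asymp\sqrt{\log\tau}$; that choice in your outline appears to be reverse-engineered from the answer rather than derived.

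The square root is specific to the Gaussian kernel: it is the scale $r$ at which $|G(z,w)|^2=e^{-|z-w|^2}$ drops below $1/\tau$, namely $r\asymp\sqrt{\log\tau}$. The mechanism that actually produces it is a geometric comparison of $PM_\Omega P$ with concentration operators on the enlarged and shrunk domains $\Omega_{\pm r}=\{z:\pm d^\pm(z,\partial\Omega)<r\}$, using the off-diagonal Gaussian decay directly; the boundary layer $\{d(\cdot,\partial\Omega)\le r\}$ then has area $\lesssim \hh(\partial\Omega)\big(\sqrt{\log\tau}+\tfrac{\log\tau}{\eta}\big)/\cu$ by Corollary~\ref{coar}, and a dyadic decomposition over scales accounts for the extra $\log(1+\log\tau)$. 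Your moment argument would need to be replaced, or at least supplemented, by this dilation step to recover the stated bound.
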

\begin{proof}
The results in \cite{MR} concern concentration operators on certain Hilbert spaces, one of which can be identified with the weighted Bargmann-Fock space \eqref{eq_fock} after a change of variables. More precisely, the \emph{short-time Fourier transform} of a function $f \in L^2(\mathbb{R})$ with respect to the window function $g(t)=e^{-t^2}(2/\pi)^{1/4}$ is a function $V_g f \in L^2(\mathbb{R}^2)$ related to the weighted Bargmann transform by 
\[Bf(z)= e^{-iz_1z_2} V_gf(z_1,-z_2/\pi), \quad z\in \C, f \in L^2(\R).\]
As explained in \cite[Remark 1.2]{MR}, the Gaussian window function $g$ belongs to Gelfand-Shilov space with parameter $\beta = 1/2$. Therefore, Theorem~1.1 in \cite{MR} (with $d = 1$ and Gelfand-Shilov type condition with parameter $\beta = 1/2$) provides a spectral estimate analogous to \eqref{eq_c} for concentration operators on the range space $V_g L^2(\mathbb{R})$. The change of variables $(x,y) \to x-i\pi y$ then yields \eqref{eq_c}.
\end{proof}

\subsection{Spectral deviation for erfc concentration operators}
We finally combine all parts of our previous analysis.

\begin{theorem}\label{pfad}
	Let $\Omega\subseteq \C$ be a compact domain with regular boundary at scale $\eta>0$ and $l\in (-\infty,\infty]$. There is a universal constant $C$ such that the following statements hold.
	\begin{enumerate}[label={(\roman*)}]
		\item\label{pfadi} For every $0<\alpha <1/2$,
		\[\#\{k\in\N: \lambda_k(P_{l} M_{\Omega}P_{l}) > \alpha\}\le \frac{|\Omega|}{\pi}+\frac{C}{\cu} \hh(\partial\Omega) \Big(\sqrt{\log \alpha^{-1}}+\frac{\log \alpha^{-1}}{\eta}\Big)\log(1+\log \alpha^{-1}).\]
		
		\item\label{pfadii} Let $a\in\R$ be such that
		\[|\Omega \cap \{\re(z)> a+l\}|=0.\]
		Then for $0<\alpha<1/2$,
		\[\#\{k\in\N: \lambda_k(P_{l} M_{\Omega}P_{l}) > 1-\alpha\}\ge \frac{|\Omega|}{\pi}-\frac{C h(a)}{\cu} \hh(\partial\Omega) \Big(\sqrt{\log \alpha^{-1}}+\frac{\log \alpha^{-1}}{\eta}\Big)\log(1+\log \alpha^{-1}),\]
		where
		\[h(x)=\begin{cases}
			1 & \text{if } x\le 2
			\\	x^2\log x & \text{if } x> 2
		\end{cases}.\] 
	\end{enumerate}
\end{theorem}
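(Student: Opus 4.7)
The plan is to combine the previous reductions with the Bargmann--Fock spectral estimate of Theorem~\ref{teig}. By Lemma~\ref{ltrans}, the operators $P_l M_\Omega P_l$ and $P_0 M_{\Omega'} P_0$ share the same spectrum, where $\Omega' := \Omega - l$. Since translation preserves $|\Omega|$, $\hh(\partial\Omega)$, $\cu$ and $\eta$, it is enough to prove both parts with $l=0$ and $\Omega$ replaced by $\Omega'$.

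Part~\ref{pfadi} is then immediate: Lemma~\ref{clam} gives
$\#\{k : \lambda_k(P_0 M_{\Omega'} P_0) > \alpha\} \le \#\{k : \lambda_k(P M_{\Omega'} P) > \alpha\}$, and since $\alpha < 1/2$ the parameter $\tau$ in Theorem~\ref{teig} equals $1/\alpha$, yielding the claimed upper bound.

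For Part~\ref{pfadii}, set $a^+ := \max(a,0)$. Then $|\Omega' \cap \{z_1 > a^+\}| = 0$ and Lemma~\ref{llow} produces
\[
\#\{k : \lambda_k(P_0 M_{\Omega'} P_0) > 1-\alpha\} \ge \#\{k : \lambda_k(P M_{\Omega'} P) > 1-\alpha'\}, \qquad \alpha' := \frac{F(2a^+)}{4 F(-2a^+)}\,\alpha^2.
\]
Since $F(2a^+) \le F(-2a^+)$ for $a^+ \ge 0$, one has $\alpha' \le \alpha^2/4 < 1/2$. Applying Theorem~\ref{teig} at threshold $1-\alpha'$ (so that $\tau = 1/\alpha'$) gives a lower bound of exactly the announced form, except that $\log(1/\alpha)$ is replaced throughout by $\log(1/\alpha')$.

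The only non-routine step is converting $\log(1/\alpha')$ back to $\log(1/\alpha)$ at the price of the factor $h(a)$. Writing $\log(1/\alpha') = 2 \log(1/\alpha) + \log(4 F(-2a^+)/F(2a^+))$, we distinguish two cases. For $a \le 2$ the ratio $F(-2a^+)/F(2a^+)$ is bounded by an absolute constant, so $\log(1/\alpha') \asymp \log(1/\alpha)$ and $h(a) = 1$ suffices. For $a > 2$, the standard tail asymptotic $F(x) \sim (x\sqrt{2\pi})^{-1} e^{-x^2/2}$ as $x \to +\infty$, together with $F(-2a) \to 1$, gives $\log(F(-2a)/F(2a)) \le C a^2$, whence $\log(1/\alpha') \le C(\log(1/\alpha) + a^2)$. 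Splitting further into $\log(1/\alpha) \ge a^2$ (where the extra $a^2$ contribution is harmless) and $\log(1/\alpha) < a^2$ (where $\sqrt{\log(1/\alpha')} + \log(1/\alpha')/\eta \lesssim a + a^2/\eta$ and $\log(1+\log(1/\alpha')) \lesssim \log a$, while the corresponding $\alpha$-quantities are bounded below by universal positive constants), one checks using $a \ge 2$ that the factor $h(a) = a^2 \log a$ absorbs the discrepancy. This routine but delicate bookkeeping, in particular keeping the $\eta$-dependence consistent across both regimes, is the main technical obstacle.
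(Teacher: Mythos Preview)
Your approach is essentially identical to the paper's: reduce to $l=0$ via Lemma~\ref{ltrans}, handle part~\ref{pfadi} by Lemma~\ref{clam} plus Theorem~\ref{teig}, and handle part~\ref{pfadii} by Lemma~\ref{llow} plus Theorem~\ref{teig} followed by absorbing the shift $\log(1/\alpha)\mapsto\log(1/\alpha')$ into the factor $h(a)$; the paper packages this last step via the intermediate function $g(x)=\max\{1,\log(F(2x)^{-1})\log[1+\log(F(2x)^{-1})]\}$ and the bound $F(2x)\ge\sqrt{2/\pi}\,e^{-4(x^2+1)}$, which amounts to the same case analysis you sketch. Two small points: your reduction via Lemma~\ref{ltrans} is only stated for $l\in\R$, so you should dispose of $l=\infty$ separately (there $P_l=P$ and Theorem~\ref{teig} applies directly), and your phrase ``the corresponding $\alpha$-quantities are bounded below by universal positive constants'' is slightly imprecise for the $\log(1/\alpha)/\eta$ term---but the comparison still works once you match the $\eta$-free and $1/\eta$ contributions separately, as you indicate.
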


\begin{proof}
If $l=\infty$ we can apply Theorem~\ref{teig} directly, so we assume $l\in\R$.
By Lemmas~\ref{ltrans} and \ref{clam} we have
\begin{align*}
	\#\{k\in\N: \lambda_k(P_{l} M_{\Omega}P_{l}) > \alpha\}&=\#\{k\in\N: \lambda_k(P_{0} M_{\Omega-l}P_{0}) > \alpha\}
	\\ &\le \#\{k\in\N: \lambda_k(P M_{\Omega-l}P) > \alpha\}.
\end{align*}
Part \ref{pfadi} now follows from Theorem~\ref{teig}.

Regarding part \ref{pfadii}, notice that the case $a=0$ implies the case $a<0$, so we assume that $a\ge 0$ without loss of generality. The set $\Omega-l$ satisfies the hypothesis of Lemma~\ref{llow} since
\[|(\Omega-l) \cap \{\re(z)> a\}|=|\Omega \cap \{\re(z)> a+l\}|=0.\]
Hence, by Lemmas~\ref{ltrans} and \ref{llow},
\begin{align*}
	\#\{n\in\N: \ \lambda_n(P_{l} M_\Omega P_{l})>1-\alpha\}
	&=\#\{n\in\N: \ \lambda_n(P_{0} M_{\Omega-l} P_{0})>1-\alpha\}
	\\ &\ge \#\Big\{n\in\N: \ \lambda_n(P M_{\Omega-l} P)>1-\frac{F(2a)}{4F(-2a)}\alpha^2\Big\}.
\end{align*}
Notice that since $a\ge 0$, $F(2a)\le F(-2a)$ and therefore,
\[\frac{F(2a)}{4F(-2a)}\alpha^2\le \frac{1}{2}.\]
So, applying Theorem~\ref{teig},
\begin{multline*}
	\#\{n\in\N: \ \lambda_n(P_{l} M_\Omega P_{l})>1-\alpha\}
 \ge \frac{|\Omega|}{\pi}-\frac{C}{\cu} \hh(\partial\Omega) \Big(\sqrt{\log \tau}+\frac{\log \tau}{\eta}\Big)\log(1+\log \tau),
\end{multline*}
where
\[\tau= \frac{4 F(-2a)}{F(2a)\alpha^2}.\]
Since $\alpha< 1/2$ we get
\begin{multline*}
	\#\{n\in\N: \ \lambda_n(P_{l} M_\Omega P_{l})>1-\alpha\}
	\ge \frac{|\Omega|}{\pi}-C  \frac{g(a)}{\cu} \hh(\partial\Omega)  \Big(\sqrt{\log \alpha^{-1}}+\frac{\log \alpha^{-1}}{\eta}\Big)\log(1+\log \alpha^{-1}),
\end{multline*}
where
\[g(x)=\max\big\{1, \log (F(2x)^{-1})\log [1+\log (F(2x)^{-1})]\big\}.\]
The estimate involving the function $h$ now follows from a straightforward computation using that, for $x \geq 0$,
\begin{align*}
	F(2x)&=\frac{1}{\sqrt{2\pi}} \int_{-\infty}^0 e^{-(2x-t)^2/2} dt
	=\sqrt{\frac{2}{\pi}} \int_{x}^{\infty} e^{-2s^2} ds
	\ge\sqrt{\frac{2}{\pi}} \int_{x}^{x+1} e^{-2s^2} ds
	\\ &\ge \sqrt{\frac{2}{\pi}} e^{-2(x+1)^2} \ge \sqrt{\frac{2}{\pi}} e^{-4(x^2+1)}.\qedhere 
\end{align*}
\end{proof}

\section{Discrepancy at the boundary}\label{sbou}
\subsection{Landau's method}
The main goal in this section is to prove Theorem~\ref{cdis}.
We assume throughout that $Q$ satisfies the conditions \ref{c1} to \ref{c7}, and let $S$ denote the associated droplet.

We first collect some consequences of \cite{amro20}. Throughout this section we use the following notation: for a compact domain $\Omega$, $s>0$, and a sequence of zooming points $\{p_n\}_{n=1}^\infty \subset \mathbb{C}$ we write
\begin{align*}
	\Omega_n &=p_n+\tfrac{1}{\sqrt{n}}\Omega;
	\\\Omega_n^+ &=p_n+\tfrac{1}{\sqrt{n}}\{z\in\C: \ \dd(z,\Omega)<s\};
	\\\Omega_n^- &=p_n+\tfrac{1}{\sqrt{n}}\{z\in\Omega: \ \dd(z,\partial\Omega)>s\}.
\end{align*}
The number of random samples of the Coulomb gas in these domains are denoted
\begin{align}\label{eq_aa}
	N_n&=\#[\Omega_n], \qquad N_n^\pm=\#[\Omega_n^\pm].
\end{align}
In the low temperature regime \eqref{eq_lt}, these quantities can be estimated by means of the following lemma, which is the starting point of our proofs.
\begin{lemma}\label{lemma_quote}
Under the assumptions of Theorem~\ref{cdis}, let $\delta\in (0,1)$ 
be a failure probability and $\gamma\in(0,1)$. Then there exist positive constants $s,C,M$ depending only on $c$ and $Q$, and $n_0=n_0(c,\delta,\gamma)$ such that,
with probability at least $1-\delta$, the following hold simultaneously for all $n \geq n_0$.
\begin{itemize}
\item[(i)] The points $\{z_j\}_j$ are $s/\sqrt{n}$-separated,
\begin{align*}
\sqrt{n} \min_{1 \leq j \not= k \leq n} |z_j-z_k| \geq s,
\end{align*}
and contained in $S_M=S+B_{M/\sqrt{n}}(0)$, where $S$ is the droplet.

\item[(ii)] The counting functions \eqref{eq_aa} satisfy
\begin{align}\label{ecard0}
	N_n^+-N_n^-\lesssim \frac{1+1/\eta}{\cu} \cdot \hh(\partial\Omega),
\end{align}
where the implied constant depends on $c$ and $Q$.
\item[(iii)] Let $\rho>0$. Then the spectrum of the \emph{concentration operator} $T_{\rho,n}:L^2(\C,dA)\to L^2(\C,dA)$,
\begin{align}\label{eq_top}
T_{\rho,n} f=P_{\ww_{\lfloor \rho n\rfloor }}(\chi_{\Omega_n\cap S_{M+s}} P_{\ww_{\lfloor \rho n\rfloor}}f),
\end{align}
satisfies, for $\gamma \in (0,1)$,
\begin{align}\label{ecard}
\begin{aligned}
	N_n^+&\ge\#\{k\in\N: \lambda_k(T_{1-\gamma,n}) > 1-\gamma^2/C\}, 
	\\ N_n^-&\le\#\{k\in\N: \lambda_k(T_{1+\gamma,n}) > \gamma^2/C\}.
\end{aligned}
\end{align}
\end{itemize}  
\end{lemma}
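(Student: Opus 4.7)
The three assertions are drawn from the machinery developed in \cite[Section~5]{amro20}, which characterizes Coulomb samples in the freezing regime via sampling and interpolation on the weighted polynomial spaces $\ww_n$. Start from \cite[Theorem~5.1]{amro20} and the estimates in its proof: with failure probability summable in $n$, a random configuration drawn from $\mathbf{P}_n^{\beta_n}$ with $\beta_n \ge c\log n$ is $s/\sqrt n$-separated, lies in $S_M$, and satisfies \eqref{eq_s1} on $\ww_{\lfloor(1-\gamma)n\rfloor}$ and \eqref{eq_s2} on $\ww_{\lfloor(1+\gamma)n\rfloor}$, the constants depending only on $c,Q,\gamma$. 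A Borel--Cantelli argument, intersecting failure events across $n\ge n_0$, supplies the simultaneous statement; this already yields (i).

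For (ii), observe that $\Omega_n^+\setminus \Omega_n^-$ sits inside $p_n + (\partial\Omega + B_s(0))/\sqrt n$, so Corollary~\ref{coar} bounds its area by $C\cu^{-1}\hh(\partial\Omega)(1+1/\eta)/n$. Combining this with the deterministic consequence of $s/\sqrt n$-separation from part (i)---which forces every planar region of area $A$ contained in $S_M$ to host $\lesssim 1+nA$ particles---yields \eqref{ecard0}. (The same bound follows directly from the annular packing argument in the proof of \cite[Proposition~1.3]{amro20}.)

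For (iii), these are precisely the conclusions of Landau's method as implemented in \cite[Section~5]{amro20}. For the lower bound on $N_n^+$, take $\rho=1-\gamma$, let $V$ be the span of the eigenfunctions of $T_{1-\gamma,n}$ with eigenvalues exceeding $1-\gamma^2/C$, and apply \eqref{eq_s1} to $f\in V$: any such $f$ concentrates at least a fraction $1-\gamma^2/C$ of its $L^2$-mass on $\Omega_n\cap S_{M+s}$, and the sampling inequality then forces at least $\dim V$ of the $z_j$ to fall in the slightly enlarged set $\Omega_n^+$ (the enlargement absorbing the sampling scale $s/\sqrt n$). The threshold $1-\gamma^2/C$ is precisely the stability margin \eqref{eq_delta} with $\delta\asymp\gamma^2$. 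Dually, the upper bound on $N_n^-$ follows by applying the interpolation inequality \eqref{eq_s2} to data mass-localized at the particles inside $\Omega_n^-$: this produces a function in $\ww_{\lfloor(1+\gamma)n\rfloor}$ whose $L^2$-mass on $\Omega_n\cap S_{M+s}$ is nearly optimal, thereby forcing $N_n^-\le \#\{\lambda_k(T_{1+\gamma,n})>\gamma^2/C\}$ by a dimension count.

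The only non-mechanical point is reconciling the formulation of \cite{amro20}---stated there for concentration operators associated with observation disks---with the present setup of arbitrary compact domains $\Omega$ and their shrunken/enlarged versions $\Omega_n^\pm$. This rewriting is routine because the arguments of \cite{amro20} rely only on the existence of a thin boundary layer of width $s/\sqrt n$ and on the regularity of $\partial\Omega$ captured by $\cu$ and $\eta$, both of which are available here; the cutoff $\Omega_n\cap S_{M+s}$ in the definition of $T_{\rho,n}$ absorbs the exponentially small tail of $K_n$ outside a neighborhood of the droplet, so that the sampling and interpolation stability margins drive every step with the announced constants.
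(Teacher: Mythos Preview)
Your proposal is correct and follows essentially the same approach as the paper. For (i) both you and the paper invoke \cite[Theorem~5.1]{amro20} and \cite{Am}; for (ii) both combine the separation from (i) with the area bound of Corollary~\ref{coar}; and for (iii) both defer to the Landau-method computations in \cite{amro20}---the paper simply cites Equations~(6.8) and~(6.13) there (note these are in Section~6, not Section~5), whereas you sketch the sampling/interpolation mechanics and flag the passage from disks to general $\Omega$, but the substance is identical.
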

\begin{proof}
The separation claim in (i) is proved in \cite{amro20}, while the containment follows from the more general ``localization theorem''	in \cite{Am}; for a definite citation, Theorem 5.1 in \cite{amro20} contains both statements exactly as quoted.

Since the points $\{z_j\}_j$ are $s/\sqrt{n}$-separated, we can invoke
Corollary~\ref{coar} and estimate
\[N_n^+-N_n^-\lesssim {s}^{-2} |\{z\in\C: \ \dd(z,\partial\Omega)\le s\}|\lesssim \frac{\hh(\partial\Omega)}{\cu} {s}^{-2} (s+s^2/\eta).\]
Since $s$ depends only on $c$ and $Q$, this proves (ii).

Finally, the estimates in (iii) are contained in the proof of \cite[Theorem 1.2]{amro20}; specifically, see Equations (6.8) and (6.13) in \cite{amro20}. 	
\end{proof}

\subsection{Convergence of spectra}
The following lemma helps describe the limiting spectral profile of concentration operators $T_{\rho,n}$ on spaces of weighted polynomials under the scaling discussed in Section \ref{sec_sc}.

\begin{lemma}\label{llim}
	Let $p_n\to p\in  S$ with parameters $\theta\in[0,2\pi)$ and $l\in(-\infty,\infty]$ as in Definition~\ref{dlim}, $\rho\in (0,2)$ and write $m=\sqrt{\rho \Delta Q(p)}$. Let $\Omega\subset\C$ be compact, $s,M>0$, define $T_{\rho,n}$ by \eqref{eq_top} and set 
	\begin{align}\label{eq_limset}
	\widetilde{\Omega}=m (e^{-i\theta}\Omega \cap \{z: \re(z)\le M+s+l\}) \quad \text{and} \quad T_\rho=P_{{m l }} M_{\widetilde{\Omega}}P_{{ ml}}.
	\end{align} 
	
	Then for $\alpha\in (0,1)\smallsetminus \{\lambda_k(T_\rho)\}_k$, we have,
	\begin{align*}
		\lim_{n\to\infty}\#\{k\in\N: \lambda_k(T_{\rho,n}) > \alpha\}
		=\#\{k\in\N: \lambda_k(T_\rho) > \alpha\}.
	\end{align*}
\end{lemma}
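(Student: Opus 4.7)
The plan is to realize the spectrum of $T_{\rho,n}$ as that of an integral operator on a rescaled domain, show convergence of this operator to $T_\rho$ in Hilbert--Schmidt norm, and conclude via standard perturbation theory that individual eigenvalues converge, so the counting function converges at any continuity point $\alpha$.

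Set $N = \lfloor \rho n \rfloor$ and $A_n = \Omega_n \cap S_{M+s}$. The first step is the standard reduction that the nonzero spectrum of $T_{\rho,n} = P_{\ww_N} M_{A_n} P_{\ww_N}$ coincides with that of the self-adjoint operator $M_{A_n} P_{\ww_N} M_{A_n}$, which acts on $L^2(A_n,dA)$ as the integral operator with kernel $K_N|_{A_n \times A_n}$. Next I would rescale: conjugate by the unitary multiplication $U_n f = g_n f$ from the cocycles in Lemma~\ref{lklim}, and apply the zooming change of variables $\tilde\Gamma_n(z) = \sqrt{N \Delta Q(p_n)}(z-p_n)e^{-i\theta_n}$. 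This yields a unitarily equivalent operator $\widetilde{\mathcal T}_n$ on $L^2(\Omega_n^{\#}, dA)$, where $\Omega_n^{\#} = \tilde\Gamma_n(A_n)$, with kernel
\[
\widetilde K_n(z,w)
= \frac{g_n(\tilde\Gamma_n^{-1}(z))\,\overline{g_n(\tilde\Gamma_n^{-1}(w))}}{N\,\Delta Q(p_n)}\;
K_N\!\bigl(\tilde\Gamma_n^{-1}(z),\tilde\Gamma_n^{-1}(w)\bigr).
\]

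The key convergences are then: (a) $\chi_{\Omega_n^{\#}} \to \chi_{\widetilde\Omega}$ almost everywhere, and (b) $\widetilde K_n \to F_{ml}$ uniformly on compact subsets of $\C^2$. For (b), since $\sqrt{N}\,d^\pm(p_n,\partial S) \to \sqrt{\rho}\,l$, Lemma~\ref{lklim} applied with $N$ in place of $n$ delivers the limiting kernel $F_{\sqrt{\Delta Q(p)}\,\sqrt{\rho}\,l} = F_{ml}$ in the boundary case, and $G = F_\infty$ in the bulk case. For (a), writing a point of $A_n$ as $u = p_n + v/\sqrt{n}$ with $v \in \Omega$, the condition $u \in S_{M+s}$ becomes, via the smoothness of $\partial S$ at $p$ (Condition~\ref{c2}) and the definition of $l$ in \eqref{elim}, the half-plane constraint $\re(v e^{-i\theta}) \le M+s+l$ in the limit; its image under $v \mapsto m v e^{-i\theta}$ is exactly $\widetilde\Omega$ in \eqref{eq_limset}. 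In the bulk case $l = \infty$, $\tilde\Gamma_n(S_{M+s})$ eventually contains any bounded set, so $\Omega_n^{\#} \to m\Omega = \widetilde\Omega$ trivially. Since $\widetilde\Omega$ is bounded and both $\widetilde K_n$ and $F_{ml}$ are uniformly bounded on a compact neighborhood of $\widetilde\Omega \cup \bigcup_n \Omega_n^{\#}$, dominated convergence gives
\[
\chi_{\Omega_n^{\#}}(z)\,\widetilde K_n(z,w)\,\chi_{\Omega_n^{\#}}(w)
\xrightarrow{n\to\infty}
\chi_{\widetilde\Omega}(z)\,F_{ml}(z,w)\,\chi_{\widetilde\Omega}(w)
\quad \text{in } L^2(\C^2),
\]
i.e.\ $\widetilde{\mathcal T}_n \to T_\rho$ in Hilbert--Schmidt norm when both sides are extended by zero off their natural domains.

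To finish, Hilbert--Schmidt (a fortiori operator-norm) convergence of compact self-adjoint operators implies $\lambda_k(\widetilde{\mathcal T}_n) \to \lambda_k(T_\rho)$ for every $k$ by Weyl's monotonicity inequalities, and since $\widetilde{\mathcal T}_n$ and $T_{\rho,n}$ have the same positive spectrum, the same holds for $\lambda_k(T_{\rho,n})$. For $\alpha \in (0,1) \setminus \{\lambda_k(T_\rho)\}_k$, the eigenvalue counting function is then continuous at the limit and the desired convergence follows. The main obstacle is the careful execution of step (a): while the kernel convergence is an off-the-shelf consequence of Lemma~\ref{lklim}, controlling $\chi_{\Omega_n^{\#}} \to \chi_{\widetilde\Omega}$ requires quantifying, at the microscopic scale $1/\sqrt{n}$, the deviation of $\partial S$ from its tangent line at the foot $q_n$, and glueing this together with the rotation $\theta_n \to \theta$ and the signed distance limit $\sqrt{n}\,d^\pm(p_n,\partial S) \to l$. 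Everything else is soft operator theory.
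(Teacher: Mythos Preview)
Your approach is essentially the paper's: conjugate by the cocycles and the zooming map, establish convergence of the rescaled kernel via Lemma~\ref{lklim} and of the indicator of the rescaled domain, deduce Hilbert--Schmidt convergence, and conclude eigenvalue convergence by Courant--Fisher. The paper stays with the $PMP$ form and therefore proves $L^2$-convergence of the kernels via pointwise convergence plus convergence of $\tr(\til_n^2)$ and the Brezis--Lieb lemma; your passage to the $MPM$ compression is a mild simplification, since the kernel then factors as $\chi_{\Omega_n^\#}\,\widetilde K_n\,\chi_{\Omega_n^\#}$ and dominated convergence applies directly on a fixed compact set. Two loose ends to tighten: first, the cocycle placement is off---in Lemma~\ref{lklim} the functions $g_n$ take the \emph{rescaled} variable, so the conjugated kernel carries $g_n(z)\overline{g_n(w)}$ rather than $g_n(\tilde\Gamma_n^{-1}(z))\overline{g_n(\tilde\Gamma_n^{-1}(w))}$; second, applying Lemma~\ref{lklim} ``with $N=\lfloor\rho n\rfloor$ in place of $n$'' is not immediate, since the lemma is stated for the full sequence of naturals and $N$ may repeat ($\rho<1$) or skip ($\rho>1$) values---the paper handles this explicitly via Lemma~\ref{lext}, extending to a full sequence with microscopic position $\sqrt{\rho}\,l$ and, when $\rho<1$, repeating the argument at most $\lceil\rho^{-1}\rceil$ times to cover all preimages. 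For your step~(a) the paper simply cites \cite[Lemma~6.2]{amro20}, so your identification of the indicator convergence as the main technical point is accurate.
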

\begin{proof}
Without loss of generality, we assume that for all $n \in \mathbb{N}$, $p_n$ belongs to the neighborhood of $S$ referred to in the assumptions on the potential $Q$.
	
Fix $g_n$ as in Lemma~\ref{lklim}. We write the zooming map $\Gamma:\C\to\C$ for $\rho n$,
\[\Gamma(z)= \sqrt{\lfloor \rho n\rfloor \Delta Q(p_n)} (z-p_n)e^{-i\theta_n},\]
and define its associated pull-back $\Gamma':L^2(\C,dA) \to L^2(\C,dA)$, $\Gamma' f= f\circ \Gamma$. Consider the operators
\begin{align*}
	\til_{n}= M_{g_{\lfloor \rho n\rfloor}}^{-1} \Gamma'^{-1} T_{\rho,n} \Gamma'  M_{g_{\lfloor \rho n\rfloor}}, \qquad \text{and} \qquad  R_n=\til_{n}-T_\rho.
\end{align*}
	
By the Courant-Fisher characterization of eigenvalues of self-adjoint operators, 
\[|\lambda_k(T_\rho+R_n)- \lambda_k(T_\rho)|\le \|R_n\|.\] 
Therefore, it suffices to show that $\|R_n\|\to 0$, since in that case, for every $\alpha\in (0,1)\smallsetminus \{\lambda_k(T)\}_k$,
\[\#\{k\in \N: \ \lambda_k(T_{\rho,n})>\alpha\}= \#\{k\in \N: \ \lambda_k(T_\rho+R_n)>\alpha\}\to \#\{k\in \N: \ \lambda_k(T_\rho)>\alpha\}.\]

Let us show that $\|R_n\|\to 0$.
First, a direct computation shows that $\til_n$ has integral kernel 
\begin{align}\label{etou}
	K_{\til_{n}}(z,w)&=\int_{\Gamma(\Omega_n\cap S_{M+s})}  \mathcal{K}_{n}(z,u) \mathcal{K}_{n}(u,w) \,dA(u),
\end{align}
where
\[\mathcal{K}_{n}(z,w)=g_{\lfloor \rho n\rfloor}(z)\overline{g_{\lfloor \rho n\rfloor}(w)} \frac{K_{\lfloor \rho n\rfloor}(\Gamma^{-1}(z),\Gamma^{-1}(w))}{{\lfloor \rho n\rfloor} \Delta Q(p_n)}.\]
In addition,
\[\Gamma(\Omega_n\cap S_{M+s})= \sqrt{\tfrac{\lfloor \rho n\rfloor}{n} \Delta Q(p_n)} e^{-i\theta_n}(\Omega\cap \sqrt{n}(S_{M+s}-p_n))\]
is uniformly bounded. Due to the assumed regularity of $\partial S$,
\[\mathbbm{1}_{\Gamma(\Omega_n\cap S_{M+s})}\xrightarrow{L^2} \mathbbm{1}_{\widetilde{\Omega}};\]
see \cite[Lemma~6.2]{amro20} for details.

On the other hand, Lemma~\ref{lklim} provides a uniform limit for the integrand on the right-hand side of \eqref{etou}. Thus, we have the pointwise limit:
\begin{align}\label{eq_l}
K_{\til_{n}}(z,w)\xrightarrow{n\to\infty} 
K_{T_\rho}(z,w):=
\int_{\widetilde{\Omega}} F_{ml}(z,u)F_{ml}(u,w) \,dA(u), \qquad z,w\in \C,
\end{align}
which is the integral kernel of $T_\rho$.
To fully justify the use of Lemma~\ref{lklim}, we indicate how to sort out the technicalities concerning integer parts. Consider first the set of numbers $I := \{ \lfloor \rho n\rfloor: n \in \mathbb{N} \}$ enumerated as $\{n_k: k \geq 1\}$. For each $k \in \N$, we select one $n \in \mathbb{N}$ such that $n_k=\lfloor \rho n\rfloor$ and define $\tilde{p}_{n_k} := p_n$. Using Lemma~\ref{lext}, we extend this sequence to a full sequence $\{\tilde{p}_n : n \geq 1\}$ whose microscopic position relative to the boundary is $\sqrt{\rho}\cdot l$. We then invoke Lemma~\ref{lklim} with $\tilde{p}_n$ and pass to the subsequence indexed by $n_k$. If $\rho\ge 1$ we are done, since $(\tilde{p}_{n_k})_{k\in\N}=(p_n)_{n\in\N}$. If $\rho< 1$ however, $(\tilde{p}_{n_k})_{k\in\N}$ covers only some of the $p_n$. To obtain \eqref{eq_l} for the full sequence we note that for each $n_k \in I$, the number of possible choices satisfies $\# \{n \in \mathbb{N}: \lfloor \rho n\rfloor=n_k\}\le \lceil \rho^{-1} \rceil$, so we can cover them all repeating the previous argument $\lceil \rho^{-1} \rceil$ times.

Similarly,
\begin{align}\label{eq_ll}
\begin{aligned}
	\tr ( \til_{n}^2)&=\int_{\Gamma(\Omega_n\cap S_{M+s})} \int_{\Gamma(\Omega_n\cap S_{M+s})} |\mathcal{K}_{n}(z,w)|^2  \,dA(z) dA(w)
\\	&\xrightarrow{n\to\infty} \int_{\widetilde{\Omega}} \int_{\widetilde{\Omega}} |F_{ml}(z,w)|^2  \,dA(z) dA(w) = \tr(T_\rho^2).
\end{aligned}
\end{align}
The pointwise convergence of the kernels \eqref{eq_l} together with the convergence of the $L^2$-norms \eqref{eq_ll} ensures,
by the Brezis--Lieb lemma, that the kernels $K_{\til_{n}}$ converge in $L^2(\C\times\C, dA\times dA)$ to $K_{T_\rho}$, the integral kernel of $T_\rho$. Finally, by Cauchy-Schwartz,
\[\|R_n f\|_{2}=\|(\til_{n}-T_\rho) f\|_{2}\le \|K_{\til_{n}}- K_{T_\rho}\|_{L^2(\C\times\C, dA\times dA)}\|f\|_{2}, \qquad  f\in L^2(\C, dA).\]
Thus, we conclude that $\|R_n\|\to 0$.
\end{proof}

We aim to apply our spectral bounds to the Toeplitz operator in \eqref{eq_limset}; as a first step, we provide the following estimate for the area of the corresponding concentration domain.

\begin{lemma}\label{lmeas}
	With assumptions and notation of Lemma \ref{llim}, the following holds:
	\[	\left||\widetilde{\Omega}|-{ \Delta Q(p)}|\Omega \cap \{z: \re(z e^{-i\theta}) \leq l\}|\right|\le |\rho-1|\Delta Q(p) |\Omega |+ 2 \Delta Q(p) (M+s) \hh(\partial \Omega).\]
\end{lemma}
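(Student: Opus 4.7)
My plan is to unfold $|\widetilde\Omega|$ via the affine map that defines it, and then apply the triangle inequality to separate the ``scaling error'' (from $\rho\ne 1$) from the ``truncation error'' (from replacing the half-plane $\{z:\re(ze^{-i\theta})\leq M+s+l\}$ with $\{z:\re(ze^{-i\theta})\leq l\}$). Since the map $z\mapsto me^{-i\theta}z$ has Jacobian $m^{2}=\rho\Delta Q(p)$ and factors as a rotation (measure-preserving) composed with a dilation, the change of variables gives
\begin{equation*}
|\widetilde\Omega|=\rho\Delta Q(p)\,|\Omega_{M+s+l}|,\qquad\text{where }\ \Omega_t:=\Omega\cap\{z\in\C:\re(ze^{-i\theta})\leq t\}.
\end{equation*}

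With this identity the difference of interest becomes
\begin{equation*}
|\widetilde\Omega|-\Delta Q(p)|\Omega_l|=\Delta Q(p)(\rho-1)|\Omega_{M+s+l}|+\Delta Q(p)\bigl(|\Omega_{M+s+l}|-|\Omega_l|\bigr),
\end{equation*}
and I would estimate the two summands separately. The first is bounded in absolute value by $|\rho-1|\Delta Q(p)|\Omega|$, since $\Omega_{M+s+l}\subseteq\Omega$. The second equals $\Delta Q(p)$ times the area of $\Omega$ inside the strip of width $M+s$ orthogonal to $e^{i\theta}$; by Fubini in the rotated coordinates $\xi=\re(ze^{-i\theta})$ and $\eta=\im(ze^{-i\theta})$, this area equals $\int_{l}^{M+s+l}\hh(\Omega\cap\{\xi=t\})\,dt$, and each fiber's length is dominated by the length of the projection of $\Omega$ onto the $\eta$-axis.

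For a compact connected $\Omega$ with rectifiable boundary, that projection has length at most $\hh(\partial\Omega)/2$, by the standard argument: pick two points of $\Omega$ whose projections realize the extremes of $\pi(\Omega)$, and observe that they are joined along $\partial\Omega$ by two arcs each of length no smaller than their Euclidean distance. Hence $|\Omega_{M+s+l}|-|\Omega_l|\leq(M+s)\hh(\partial\Omega)/2$, which together with the scaling bound yields the claim with a factor-of-four cushion relative to the stated constant $2$. I anticipate no substantive obstacle here: the argument is entirely geometric, and the ample slack in the final constant makes even loose substitutes for the projection-versus-perimeter inequality sufficient.
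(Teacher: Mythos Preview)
Your approach is essentially the paper's: compute $|\widetilde\Omega|=\rho\,\Delta Q(p)\,|\Omega_{M+s+l}|$, split off the scaling error, and bound the strip area by the width times the length of the projection of $\Omega$ onto the orthogonal axis. The only difference is in that projection bound: the paper uses the cruder estimate $\hh(\pi(\Omega))\le\hh(\partial\Omega)$ (observing that $\pi(\Omega)=\pi(\partial\Omega)$ for any bounded $\Omega$, then that $\pi$ is $1$-Lipschitz), which works without your connectedness hypothesis---note that Lemma~\ref{llim} only assumes $\Omega$ compact---while your sharper $\hh(\partial\Omega)/2$ is unnecessary given the slack in the stated constant.
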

\begin{proof} Note that,
	\begin{align*}
|\widetilde{\Omega}|
		&= m^2(|\Omega \cap \{z: \re(z e^{-i\theta}) \leq l\}|+ |\Omega \cap \{z: l\le \re(z e^{-i\theta}) \leq M+s+l\}|)
		\\  &= \rho  \Delta Q(p) (|\Omega \cap\{z: \re(z e^{-i\theta}) \leq l\}|+ |\Omega \cap \{z: l\le \re(z e^{-i\theta}) \leq M+s+l\}|).
	\end{align*}
In addition, since $\Omega$ is bounded,
\begin{align*}
\{\im(z): z \in \Omega\} = \{\im(z): z \in \partial\Omega\}.
\end{align*}
Since the projection $z \mapsto \im(z)$ is contractive, the (length) measure of the projected set $\{\im(z): z \in \Omega\}$ is thus bounded by $\hh(\partial \Omega)$. Hence,
\begin{align*}
|\Omega \cap \{z: l\le \re(z e^{-i\theta}) \leq M+s+l\}| \le \hh(\partial \Omega) (M+s).
\end{align*}
Combining the previous estimates,
\begin{align*}
	\left||\widetilde{\Omega}|-{ \Delta Q(p)}|\Omega \cap \{z: \re(z e^{-i\theta}) \leq l\}|\right| 
	&\le \begin{multlined}[t]
	|\rho-1|\Delta Q(p)|\Omega \cap \{z: \re(z e^{-i\theta}) \leq l\}|
	\\ + 2  \Delta Q(p)|\Omega \cap \{z: l\le \re(z e^{-i\theta}) \leq M+s+l\}|
\end{multlined}
	\\  & \le |\rho-1|\Delta Q(p)|\Omega |+ 2  \Delta Q(p)(M+s) \hh(\partial \Omega).\qedhere
\end{align*}
\end{proof}

\subsection{Discrepancy}
Theorem~\ref{cdis} will be deduced from the following more general result for compact observation windows with regular boundary.

\begin{theorem}\label{tdis2}
	Assume $Q$ satisfies the conditions \ref{c1} to \ref{c7}, $\beta_n\ge c \log n$ and $\Omega\subseteq \C$ is a  compact domain  with $|\Omega|\ge 2$ and regular boundary at scale $\eta>0$.  Then there exists a deterministic constant $C=C(c,Q)$ such that, almost surely, for every $p_n\to p\in S$ with parameters $\theta\in[0,2\pi)$ and $l\in(-\infty,\infty]$ as in Definition \ref{dlim},
	\begin{align}\label{eq_dddd}
	\begin{aligned}
		&\limsup_{n\to\infty}\left|\#\left[p_n+\Omega/\sqrt{n}\right]- \Delta Q(p) \frac{|\Omega \cap \{z: \re(z e^{-i\theta}) \leq l\}|}{\pi} \right| 
		\\
		&\qquad\qquad\le C \hh(\partial\Omega)\sqrt{\log  |\Omega|}\frac{(1
			+\sqrt{\log  |\Omega|}/\eta)}{\cu}   \log(1+\log |\Omega|).
	\end{aligned}
	\end{align}
\end{theorem}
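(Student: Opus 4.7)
The plan is Landau's method: sandwich $N_n := \#[p_n+\Omega/\sqrt{n}]$ between eigenvalue counts of concentration Toeplitz operators $T_{\rho,n}$ via Lemma~\ref{lemma_quote}, pass to the scaling limit of Section~\ref{sec_sc} to replace these by eigenvalue counts for the erfc Toeplitz operators $T_\rho = P_{m_\rho l}M_{\widetilde\Omega_\rho}P_{m_\rho l}$ of Lemma~\ref{llim}, apply the spectral asymptotics of Theorem~\ref{pfad} to identify them with $|\widetilde\Omega_\rho|/\pi$ up to perimeter-type errors, use Lemma~\ref{lmeas} to compare $|\widetilde\Omega_\rho|$ with the target expression $\Delta Q(p)|\Omega \cap \{\re(z e^{-i\theta}) \le l\}|$, and finally optimize the perturbation parameter $\gamma = |1-\rho|$ to balance the two error sources.

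First, I would invoke Lemma~\ref{lemma_quote} with summable failure probabilities and apply Borel--Cantelli to fix, once and for all, an almost-sure event on which for every sufficiently large $n$ the separation, containment in $S_M$, the buffer bound $N_n^+-N_n^- \lesssim \hh(\partial\Omega)(1+1/\eta)/\cu$, and the spectral sandwich from part~(iii) hold simultaneously for \emph{every} choice of zooming window $p_n$: the sampling and interpolation inequalities underpinning part~(iii) are universal statements about $\ww_{\rho n}$, hence yield the Landau bound for any $\Omega_n$ once they hold. For a given sequence $p_n\to p$ with parameters $\theta, l$, combining parts (ii)--(iii) with $N_n^- \le N_n \le N_n^+$ gives
\begin{align*}
\#\{k:\lambda_k(T_{1-\gamma,n}) > 1-\gamma^2/C\} - C_1 \le N_n \le \#\{k:\lambda_k(T_{1+\gamma,n}) > \gamma^2/C\} + C_1,
\end{align*}
with $C_1 \lesssim \hh(\partial\Omega)(1+1/\eta)/\cu$. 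Taking $n\to\infty$, and choosing $\gamma$ so that $\gamma^2/C$ avoids the countable eigenvalue sets of $T_{1\pm\gamma}$ (a harmless perturbation), Lemma~\ref{llim} converts these into analogous bounds for the counts of $T_{1\pm\gamma}$ themselves.

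Next, I would control these limiting counts via Theorem~\ref{pfad}. The limit domain $\widetilde\Omega_{1\pm\gamma} = m_{1\pm\gamma}(e^{-i\theta}\Omega \cap \{\re(z) \le M+s+l\})$ is a rotation, a bounded scaling, and possibly a half-plane cut of $\Omega$: by Lemma~\ref{lcut} and the projection argument from the proof of Lemma~\ref{lmeas}, its boundary has regularity scale $\asymp\eta$, regularity constant $\gtrsim\cu$, and perimeter $\lesssim \hh(\partial\Omega)$. The truncation level $a = m_{1\pm\gamma}(M+s)$ is bounded by a constant depending only on $Q$ and $c$, so the factor $h(a)$ appearing in part~(ii) of Theorem~\ref{pfad} is $O(1)$. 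Applying parts (i) and (ii) of Theorem~\ref{pfad} with $\alpha \asymp \gamma^2$ yields
\begin{align*}
\Bigl|\#\{k:\lambda_k(T_{1\pm\gamma}) > \alpha\} - |\widetilde\Omega_{1\pm\gamma}|/\pi\Bigr| \lesssim \frac{\hh(\partial\Omega)}{\cu}\Bigl(\sqrt{\log(1/\gamma)} + \frac{\log(1/\gamma)}{\eta}\Bigr) \log(1+\log(1/\gamma)).
\end{align*}
Lemma~\ref{lmeas} then replaces $|\widetilde\Omega_{1\pm\gamma}|/\pi$ by $\Delta Q(p)|\Omega\cap\{\re(ze^{-i\theta})\le l\}|/\pi$ at cost $O(\gamma|\Omega| + \hh(\partial\Omega))$, giving an overall error of order $\gamma|\Omega| + \cu^{-1}\hh(\partial\Omega)(\sqrt{\log(1/\gamma)} + \log(1/\gamma)/\eta)\log(1+\log(1/\gamma))$ for the quantity in \eqref{eq_dddd}.

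Finally I would optimize by setting $\gamma := \min\{1/2,\, \hh(\partial\Omega)\sqrt{\log|\Omega|}/|\Omega|\}$, which balances the volume and perimeter contributions. The isoperimetric-type bound $\hh(\partial\Omega)\gtrsim\sqrt{|\Omega|}$ (valid for regular sets, together with $|\Omega|\ge 2$) ensures $\log(1/\gamma)\lesssim \log|\Omega|$; and when the minimum equals $1/2$ the right-hand side of \eqref{eq_dddd} already dominates $N_n$ trivially. This yields the announced estimate. The main obstacle is the tension between the scaling-limit step, which is a pointwise statement in the zooming sequence $p_n$, and the desired almost-sure uniformity over all such sequences; this is absorbed into the first step, whose good event is global in $p_n$. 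A secondary technical point is that the half-plane cut in $\widetilde\Omega_\rho$ could a priori destroy the regularity of $\partial\Omega$ --- precisely the content of Lemma~\ref{lcut}.
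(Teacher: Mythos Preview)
Your proposal is correct and follows the paper's approach closely: Landau sandwich via Lemma~\ref{lemma_quote}, scaling limit via Lemma~\ref{llim}, spectral deviation via Theorem~\ref{pfad}, area comparison via Lemma~\ref{lmeas}, then optimize $\gamma$. One imprecision worth fixing: Lemma~\ref{lcut} does \emph{not} assert that $\partial\widetilde\Omega_\rho$ is regular---it asserts regularity for the boundary of $(\Omega\cap H)\cup\partial\Omega$, not of $\Omega\cap H$ itself; the paper handles this by the null-measure surgery $\Sigma:=\widetilde\Omega\cup m e^{-i\theta}\partial\Omega$, which leaves $T_\rho$ unchanged but lets Lemma~\ref{lcut} apply directly, and you should make this step explicit. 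Finally, the paper's simpler choice $\gamma=1/|\Omega|$ already balances the errors (since $\gamma|\Omega|=1\lesssim\hh(\partial\Omega)$ by isoperimetry and $\cu\le2$), so your more elaborate optimization is unnecessary.
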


\begin{proof}
Fix a failure probability $\delta\in (0,1)$, $0<\gamma\le 1/2$ and invoke Lemma~\ref{lemma_quote} to obtain a suitable $n_0\in\N$. By taking $n_0$ large enough, we can assume that $\Delta Q$ is bounded above and below on $S_M$. We use $C,D$ to denote constants depending only on $c$ and $Q$ that may change from line to line. Note that, in particular, $M$ and $s$ are considered constants.

We apply Lemma~\ref{llim} with $\rho=1\pm \gamma \in [\tfrac12,\tfrac32]$, and adopt the notation of the lemma. From \eqref{ecard} we get
\begin{align}\label{ecard2}
\begin{aligned}
	\liminf_{n\to\infty} N_n^+&\ge\#\{k\in\N: \lambda_k(T_{1-\gamma}) > 1-\gamma^2/C\},
	\\ \limsup_{n\to\infty} N_n^-&\le\#\{k\in\N: \lambda_k(T_{1+\gamma}) > \gamma^2/C\},
\end{aligned}
\end{align}
while
\begin{align}\label{eq_ccc}
m = \sqrt{\rho \Delta Q(p)} \asymp 1.
\end{align}
In order to apply Theorem~\ref{pfad}, we first carry out a minor surgery to meet the hypothesis. We shall replace the set $\widetilde{\Omega}$ defined in \eqref{eq_limset} with \[\Sigma=\widetilde{\Omega}\cup m e^{-i\theta}\partial\Omega.\] Since we  added a null area set, the Toeplitz operator $T_\rho$ defined in \eqref{eq_limset} is unaffected by this replacement. On the other hand, by Lemma~\ref{lcut}, $\Sigma$ has regular boundary at scale $ m \eta$ with parameter $\kappa\ge \cu/2$. 

Note that
\begin{align*}
	 \hh(\partial \Sigma) & \le \hh(\partial \widetilde{\Omega})+ m \hh(\partial \Omega) = 2m \hh(\partial \Omega)
	\le 4 \sqrt{ \Delta Q(p)} \hh(\partial \Omega),
\end{align*}
while, by Lemma~\ref{lmeas} with $\rho=1\pm \gamma$, 
\begin{align}\label{earea}
	\left||\Sigma|-{ \Delta Q(p)}|\Omega \cap \{z: \re(z e^{-i\theta}) \leq l\}|\right|&\le \gamma\Delta Q(p) |\Omega |+ 2 \Delta Q(p) (M+s) \hh(\partial \Omega).
\end{align}
Combining this with \eqref{ecard0} and \eqref{ecard2},
\begin{multline*}
	\limsup_{n\to\infty} \big[N_n- \Delta Q(p)\pi^{-1}|\Omega \cap \{z: \re(z e^{-i\theta}) \leq l\}|\big]
		\\ \begin{aligned}[t]
			\le & \limsup_{n\to\infty} N_n^- - \pi^{-1}|\Sigma|  +\limsup_{n\to\infty} (N_n^+-N_n^-)  
	\\ & + \pi^{-1}(|\Sigma|- \Delta Q(p)|\Omega \cap \{z: \re(z e^{-i\theta}) \leq l\}|)
	\\\le & \#\{k\in\N: \lambda_k(T_{1+\gamma}) > \gamma^2/C\} - \pi^{-1}|\Sigma| + D\frac{\hh(\partial\Omega)}{\cu} (1+1/\eta)
	\\ &   +\gamma\Delta Q(p)\pi^{-1} |\Omega |+  \Delta Q(p) (M+s) \hh(\partial \Omega).
		\end{aligned}
\end{multline*}
Since $\gamma\le 1/2$, and assuming without loss of generality that $C\ge 1$, we have $\gamma^2/C<1/2$. Applying Theorem~\ref{pfad} to $T_{1+\gamma}=P_{{m l }} M_{\Sigma}P_{{ ml}}$,
\begin{multline}\label{esup}
	\limsup_{n\to\infty} \big[N_n- \Delta Q(p)  \pi^{-1} |\Omega \cap \{z: \re(z e^{-i\theta}) \leq l\}|\big]
	\\ \begin{aligned}[t]
		\le &  D \sqrt{\Delta Q(p)}\frac{\hh(\partial\Omega)}{\cu}  \Big(\sqrt{\log (C\gamma^{-2})} +\frac{\log (C\gamma^{-2})}{m\eta}\Big)\log(1+\log (C\gamma^{-2}))
		 \\ &  +  D\frac{\hh(\partial\Omega)}{\cu} (1+1/\eta) +\gamma\Delta Q(p) \pi^{-1}|\Omega |+  \Delta Q(p) (M+s) \hh(\partial \Omega)		
		\\ \lesssim & \gamma |\Omega |+  \frac{\hh(\partial\Omega)}{\cu}  \Big(\sqrt{\log \gamma^{-1}}
		+\frac{\log \gamma^{-1}}{\eta}\Big)\log(1+\log \gamma^{-1}).
	\end{aligned}
\end{multline}
(Here we used \eqref{eq_ccc}, and the fact that $\gamma\le 1/2$ to
eliminate the constant $C$ from the logs, absorbing the contribution into the implied multiplicative constant.)

Analogously, from \eqref{ecard0}, \eqref{ecard2} and \eqref{earea},
\begin{multline*}
	\limsup_{n\to\infty} \big[ \Delta Q(p)\pi^{-1}|\Omega \cap \{z: \re(z e^{-i\theta}) \leq l\}|-N_n\big]
	\\ \begin{aligned}[t]
	\le & \pi^{-1}|\Sigma|-\#\{k\in\N: \lambda_k(T_{1-\gamma}) >1- \gamma^2/C\}  + D\frac{\hh(\partial\Omega)}{\cu} (1+1/\eta)
		\\ &   +\gamma\Delta Q(p)\pi^{-1} |\Omega |+  \Delta Q(p) (M+s) \hh(\partial \Omega).
	\end{aligned}
\end{multline*}
Since $\gamma^2/C<1/2$, applying Theorem~\ref{pfad} to $T_{1-\gamma}$ with $a=M+s$, we get
\begin{multline*}
	\limsup_{n\to\infty} \big[ \Delta Q(p)  \pi^{-1} |\Omega \cap \{z: \re(z e^{-i\theta}) \leq l\}|-N_n \big]
	\\ 	 \lesssim \gamma |\Omega |+ \frac{\hh(\partial\Omega)}{\cu}  \Big(\sqrt{\log \gamma^{-1}}
		+\frac{\log \gamma^{-1}}{\eta}\Big)\log(1+\log \gamma^{-1}).
\end{multline*}
Combining this with \eqref{esup}, we conclude that
\begin{align}
	&\limsup_{n\to\infty} \Big|N_n- \Delta Q(p) \pi^{-1}|\Omega \cap \{z: \re(z e^{-i\theta}) \leq l\}|\Big|\notag
	\\ 
	&\qquad\qquad\lesssim \gamma |\Omega |+ \frac{\hh(\partial\Omega)}{\cu}  \Big(\sqrt{\log \gamma^{-1}}
	+\frac{\log \gamma^{-1}}{\eta}\Big)\log(1+\log \gamma^{-1}).\label{eq_ddd} 	
\end{align}
The result now follows by choosing
\[\gamma= \frac{1}{|\Omega|}, \]
which indeed satisfies $\gamma\le 1/2$ because $|\Omega|\ge 2$.
The first term in \eqref{eq_ddd}, $\gamma |\Omega|=1$, can be absorbed into the second one because, by the isoperimetric inequality,
\begin{align}\label{eq_eeee}
\hh(\partial\Omega) \geq 2 \sqrt{\pi |\Omega|} \ge 1, 
\end{align}
while $\cu \leq 2$ --- c.f., \eqref{eq_cubb}.

(It is important to note that throughout the proof the implied constants do not depend on the failure probability $\delta$.)
\end{proof}

\begin{proof}[Proof of Theorem~\ref{cdis}] 
Let us exclude the null probability event where the estimates of Theorem~\ref{tdis2} fail. Similarly, by part (i) of Lemma~\ref{lemma_quote} the event that $\limsup_n \sqrt{n} \max_j d(z_j, S) = \infty$ has null probability, and we exclude it. Fix a sample of the Coulomb gas outside the excluded events, and let
\begin{align*}
M := \limsup_n \sqrt{n} \max_{1\leq j\leq n} d(z_j, S).
\end{align*}
	
For $n\in \N$ and $p\in \C$, denote 
\begin{gather*}
	A_{n,p}= \left|\#\left[p+\Omega/\sqrt{n}\right]- \frac{n}{\pi}\Delta Q(p)  |(p+\Omega/\sqrt{n}) \cap S| \right|, \qquad \text{and,}
	\\ A_n=\sup_{p\in \C}A_{n,p}.
\end{gather*}
Let $\varepsilon_n\to 0$ and choose $p_n\in \C$ so that $A_{n,p_n}\ge A_n-\varepsilon_n$ and take a subsequence $p_{n_k}$ such that
\begin{align*}
	\lim_{k\to\infty}A_{n_k,p_{n_k}}= \limsup_{n\to\infty} A_n.
\end{align*}

\noindent \emph{Case I: (Exterior regime)}.
First, suppose that
$\liminf_k \sqrt{n_k}\, d^\pm(p_{n_k}\partial S)=-\infty$,
where $d^\pm$ is the signed distance from \eqref{edis}. Passing to a further subsequence, we assume that
\[l_{n_k}=\sqrt{n_k}\, d^\pm(p_{n_k}, \partial S) \to -\infty.\]
Then, for sufficiently large $k$,
\begin{align*}
(p_{n_k}+\Omega/\sqrt{n_k}) \cap S_M=\varnothing
\end{align*}
and, consequently,
\[\lim_{k\to \infty} \#\left[p_{n_k}+\Omega/\sqrt{n_k}\right]=0.\]
Hence,
\[\limsup_{n\to\infty} A_n=\lim_{k\to\infty}A_{n_k,p_{n_k}}=0,\]
and the desired estimate holds trivially.

\noindent \emph{Case II: (Interior or boundary regime)}.

We assume that
\[\liminf_{k\to \infty} \sqrt{n_k}\, d^\pm(p_{n_k},\partial S)>-\infty,\]
and, in particular,
\[\lim_{k\to \infty} d(p_{n_k},S)=0.\]

We take a subsequence (still denoted $p_{n_k}$) such that $p_{n_k}\to p_*\in S$, $e^{i\theta_{n_k}}\to e^{i\theta_*}$  and $l_{n_k}\to l_*\in(-\infty,\infty]$, with the parameters $\theta_{n_k}, \theta_*$ as in Definition \ref{dlim}.
Applying Lemma~\ref{lext}, for $n\neq n_k$ we redefine $p_n$ so that $p_n\to p_*$, $e^{i\theta_{n}}\to e^{i\theta_*}$  and $l_{n}\to l_*$; cf. Definition \ref{dlim}. Note that, we still have
\begin{align}\label{elsup2}
	\limsup_{n\to\infty}A_{n,p_n}= \limsup_{n\to\infty} A_n.
\end{align}
 From the continuity of $\Delta Q$ and the regularity of $\partial S$ we get
 \begin{multline*}
	\left|\frac{n}{\pi}\Delta Q(p_{n})  |(p_{n}+\Omega/\sqrt{n}) \cap S|- \Delta Q(p_*)   \frac{|\Omega \cap \{z: \re(z e^{-i\theta_*}) \leq l_*\}|}{\pi} \right|
 	\\ = \left|\Delta Q(p_{n})  \frac{|\Omega \cap \sqrt{n}(S-p_n)|}{\pi}- \Delta Q(p_*)   \frac{|\Omega \cap \{z: \re(z e^{-i\theta_*}) \leq l_*\}|}{\pi} \right|\xrightarrow{n\to\infty} 0.
 \end{multline*}
Thus,
\begin{align*}
	\limsup_{n\to\infty}A_{n,p_n}= \limsup_{n\to\infty}\left|\#\left[p_n+\Omega/\sqrt{n}\right]- \Delta Q(p_*)   \frac{|\Omega \cap \{z: \re(z e^{-i\theta_*}) \leq l_*\}|}{\pi} \right|.
\end{align*}
Now we use Theorem~\ref{tdis2}. By Lemma~\ref{rrb}, as the boundary of $\Omega$ is connected, $\Omega$ has regular boundary at scale $\eta=\hh(\partial\Omega)$ and $1\le \cu\le 2$. Therefore, we can apply Theorem~\ref{tdis2}. We now inspect \eqref{eq_dddd} and argue that the term $(1+\sqrt{\log  |\Omega|}/\eta)/\cu$ can be dropped. To see this, note that $\cu^{-1}\le 1$, as well as the fact that using
\eqref{eq_eeee} and $|\Omega|\ge 2$ we get
\begin{align*}
\frac{\sqrt{\log  |\Omega|}}{\eta}
&=\frac{\sqrt{\log  |\Omega|}}{\hh(\partial\Omega)}
\lesssim 
\sqrt{\frac{\log  |\Omega|}{|\Omega|}}\lesssim 1.
\end{align*}

Thus, we conclude that
\begin{align*}
	\limsup_{n\to\infty}A_{n,p_n}\le C\hh(\partial \Omega)\sqrt{\log  |\Omega|}   \log(1+\log |\Omega|).
\end{align*}
Combining this with \eqref{elsup2} yields
\eqref{eq_a2}.
\end{proof}
	
\section{Discrepancy in the bulk}\label{sbul}

\subsection{Collecting facts}
	
We will use the off-diagonal estimate for $K_n$ from \cite[Corollary~8.2]{AmHeMa}. To avoid introducing more notation, we state the following weaker version tailored to our needs.
	
\begin{lemma}\label{efine}
	Assume $Q$ is $\mathcal{C}^2$ and satisfies Conditions \ref{c1} and \ref{cau}. There exist constants $\varepsilon,C,D,n_0>0$  and $h\in L^1(\C)$ depending on $Q$ such that for $z\in S$ and $w \in \C$ and $n\geq n_0$,
	\begin{align*}
			|K_n(z,w)|\le \begin{cases}
				Cn e^{-\varepsilon \sqrt{n}  |z-w|} & \text{if }  |z-w|\le d(z,S^c)
				\\ C n e^{-\varepsilon \sqrt{n} d(z,S^c)} & \text{if } |w|\le D, \text{ and } |z-w|\ge d(z,S^c)
				\\ \frac{h(w)}{n} & \text{if } |w|\ge D
			\end{cases}.
	\end{align*}
\end{lemma}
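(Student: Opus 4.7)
The plan is to deduce this weaker three-case form directly from the full statement of \cite[Corollary~8.2]{AmHeMa}. That corollary already provides a unified off-diagonal bound for $K_n(z,w)$ with $z \in S$ and $w \in \C$, expressed in terms of a distance-like quantity that interpolates between the Euclidean separation inside the droplet and the distance from $z$ to $S^c$ once the second point escapes it. My task reduces to splitting $(z,w)$ into three regimes and simplifying the bound on each.

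First I would fix $D$ large enough that $S\subseteq \{|w|\le D/2\}$, and pick $\varepsilon>0$ strictly smaller than the exponent appearing in \cite{AmHeMa}, so that any subleading polynomial factors in $\sqrt{n}$ or in $|z-w|$ can be absorbed into the exponential decay. For $|w|\le D$ the cited corollary provides an estimate of the shape $|K_n(z,w)|\le Cn\exp(-\varepsilon\sqrt{n}\,\rho_n(z,w))$, where $\rho_n(z,w)$ behaves like $|z-w|$ while the segment joining $z$ and $w$ remains inside $S$ (equivalently, while $|z-w|\le d(z,S^c)$), and saturates at $d(z,S^c)$ afterwards. Specializing to these two complementary ranges of $|z-w|$ recovers cases (i) and (ii) verbatim.

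For the far field $|w|\ge D$ I would use the growth condition \eqref{qgro}, which for $|w|$ large gives $Q(w)\ge (2+2\alpha)\log|w|$ for some $\alpha>0$ depending on $Q$. Writing $K_n(z,w)=q_n(z,\overline{w})\,e^{-n(Q(z)+Q(w))/2}$, with $q_n$ polynomial of degree at most $n-1$ in $\overline{w}$, a Cauchy-type estimate on disks of radius comparable to $D$, combined with the diagonal bound $K_n(u,u)\lesssim n$ at $u\in S$, yields $|q_n(z,\overline{w})\,e^{-nQ(z)/2}|\lesssim n|w|^{n-1}$ uniformly for $z\in S$. Multiplying by $e^{-nQ(w)/2}\le |w|^{-(1+\alpha)n}$ gives
\[|K_n(z,w)|\lesssim n\,|w|^{-\alpha n - 1},\qquad |w|\ge D,\]
which for $D$ large is dominated by $n^{-1}h(w)$ with $h(w):=|w|^{-2-\alpha}$ on $|w|\ge D$ (and $0$ otherwise), clearly integrable and independent of $n$.

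The main obstacle is case (iii): producing a single $L^1$ majorant $h$ that is independent of $n$ while retaining the $n^{-1}$ decay in front. The key maneuver is to exploit the exponential-in-$n$ gain $|w|^{-\alpha n}$ from \eqref{qgro}, choosing $D$ large enough that this gain absorbs both the factor $n^2$ needed to convert the $Cn$ prefactor into $h(w)/n$ and any residual polynomial factors from the Cauchy estimate. Cases (i) and (ii), in contrast, are essentially direct restatements of \cite[Corollary~8.2]{AmHeMa} with a slightly reduced exponent, and require no further work beyond this bookkeeping.
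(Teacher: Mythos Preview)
The paper does not give a proof of this lemma; it is stated as a direct specialization of \cite[Corollary~8.2]{AmHeMa}, and the accompanying Remark explains that the far-field case (iii) is just a repackaging of the sharper bound $|K_n(z,w)|\le Cn|w|^{-\gamma n}$, which also comes from \cite{AmHeMa}. Your handling of cases (i) and (ii) is therefore exactly what the paper does.

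For case (iii), however, your independent derivation via a Cauchy estimate has a real gap. The asserted bound $|q_n(z,\overline w)\,e^{-nQ(z)/2}|\lesssim n|w|^{n-1}$ does not follow from the diagonal estimate on $S$: passing from $|K_n(z,\zeta)|\lesssim n$ on a reference circle $|\zeta|=R$ to the unweighted polynomial $q_n$ forces you to multiply by $e^{nQ(\zeta)/2}$, which injects a factor $e^{nM_R/2}$ with $M_R=\max_{|\zeta|=R}Q(\zeta)$. Your bound would need $e^{M_R/2}\lesssim R$, i.e.\ $Q\le 2\log|\cdot|+O(1)$ on $|\zeta|=R$, which is precisely the opposite of the growth hypothesis~\eqref{qgro}. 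The argument can be salvaged---fix a small $R$ with $S\subset\{|w|<R\}$, accept the constant $\lambda:=e^{M_R/2}/R$ so that $|K_n(z,w)|\lesssim n\lambda^n|w|^{-\alpha n-1}$, and then enlarge $D$ so that $D^{\alpha/2}\ge\lambda$, absorbing $\lambda^n$ into $|w|^{\alpha n/2}$ and landing on $Cn|w|^{-(\alpha/2)n-1}$---but this is not what you wrote, and the student should either spell this out or, more simply, follow the paper and cite \cite{AmHeMa} for the bound $Cn|w|^{-\gamma n}$ directly (the conversion to $h(w)/n$ with $h(w)=|w|^{-3}\chi_{\{|w|\ge D\}}$, say, is then the elementary step you describe).
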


\begin{rema}\label{rem_c2}	
We mention that \cite[Corollary~8.2]{AmHeMa} is stated for $Q\ge 1$. As explained in \cite{AmHeMa}, this assumption is made to quantify certain dependencies, that are left implicit in our formulation. Thus the assumption is not necessary in Lemma \ref{efine}. 

Second, for $|w|\ge D$, the following bound holds:
\[|K_n(z,w)|\le C n |w|^{-\gamma n},\]
for certain sufficiently small $\gamma>0$ depending on $Q$. The rather artificial bound $h(w)/n$ is formulated for technical reasons that concern our application. 

In addition, inspection of the proof in \cite{AmHeMa} and the results it relies on shows that $Q$ only needs to be assumed to be $C^2$ near the droplet $S$, which is already implied by Condition \ref{c5}.
\end{rema}

The following lemma is a particular case of \cite[Theorem~4.2]{AmHeMa2} (see also \cite[Theorem~2.1]{MR3010273}) and provides a pointwise comparison between the equilibrium measure and the kernel $K_n$ at the diagonal.
	
\begin{lemma}\label{lpun}
	Assume $Q$ satisfies Conditions \ref{c1} to \ref{cau}. There exist $C,M, n_0>0$ depending on $Q$ such that for every $n\ge n_0$ and every $z\in \C$ with $d(z,S^c)\ge M n^{-1/2}\log n$,
	\[|K_n(z,z)  - n\Delta Q(z)|\le C.\]
\end{lemma}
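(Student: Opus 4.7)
I would prove $|K_n(z,z)-n\Delta Q(z)|\le C$ in the bulk by establishing matching upper and lower bounds, each grounded in the extremal characterization
\[
K_n(z,z)=\sup\bigl\{\,|f(z)|^2 : f\in\ww_n,\ \|f\|_{L^2(\C,dA)}=1\,\bigr\}
\]
together with a microscopic comparison to the Ginibre kernel $G$ of \eqref{egin}, which satisfies $G(z,z)=1$ and $\int_\C|G(z,w)|^2\,dA(w)=1$.

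\textbf{Upper bound.} Starting from the reproducing identity $K_n(z,z)=\int_\C|K_n(z,w)|^2\,dA(w)$, I would split the integral according to the three regimes of Lemma~\ref{efine}. On $|w-z|\ge R/\sqrt{n}$ for a large fixed $R$, the first regime of Lemma~\ref{efine} applies throughout because the bulk assumption $d(z,S^c)\ge Mn^{-1/2}\log n$ keeps $|z-w|\le d(z,S^c)$ on a ball of that radius; the resulting contribution is exponentially small. The far tail $|w|\ge D$ contributes $O(1)$ via the summable bound $h(w)/n$. On the microscopic core $|w-z|\le R/\sqrt{n}$, I would use a quantitative refinement of Lemma~\ref{lklim}(i) --- available through the higher-order near-diagonal asymptotics of \cite{HeWe} (or the shorter route of \cite[Section 5]{AmCr}) --- to replace $|K_n(z,w)|^2$ by $(n\Delta Q(z))^2\,|G(\Gamma(z),\Gamma(w))|^2$ up to a multiplicative factor $1+O(n^{-1/2})$. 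After the change of variables $u=\Gamma(w)$, which contributes a Jacobian $1/(n\Delta Q(z))$, the Gaussian integral approaches $1$ as $R\to\infty$, collecting $K_n(z,z)\le n\Delta Q(z)+C$.

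\textbf{Lower bound.} I would construct a near-extremal test function $f=qe^{-nQ/2}\in\ww_n$ via the classical $\bar\partial$-technique (in the spirit of \cite{AmHeMa,AmHeMa2}). Let $\mathcal{Q}(z,\cdot)$ be the holomorphic polarization of $Q$ near $z$, which exists by real-analyticity of $Q$ on a neighborhood of $S$ (Condition~\ref{c5}) and satisfies
\[
\mathcal{Q}(z,z)=Q(z),\qquad 2\re\mathcal{Q}(z,w)=Q(w)-\Delta Q(z)|w-z|^2+O(|w-z|^3).
\]
With $\chi\in C_c^\infty(\C)$ equal to $1$ on the unit disk and supported in the disk of radius $2$, set
\[
\varphi(w)=\chi\!\bigl(\tfrac{\sqrt{n}\,(w-z)}{\log n}\bigr)\,e^{n\mathcal{Q}(z,w)}.
\]
The support of $\bar\partial\varphi$ lies in the annulus $|w-z|\asymp(\log n)/\sqrt{n}$, where the quadratic saturation above forces $|\varphi(w)|^2e^{-nQ(w)}=O(n^{-c\log n})$; a Hörmander $L^2$-estimate for $\bar\partial$ in the weight $e^{-nQ}$ (valid since $\Delta Q$ is bounded below on the support of $\chi$, by Condition~\ref{cau} and the bulk condition) then yields $u$ solving $\bar\partial u=\bar\partial\varphi$ with $\|ue^{-nQ/2}\|_2$ super-polynomially small. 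The corrected $q:=\varphi-u$ is entire, and truncating its Taylor expansion to degree $\le n-1$ loses another super-polynomially small term (the high-degree tail of $e^{n\mathcal{Q}(z,\cdot)}$ on the support of $\chi$ is estimated by Stirling). A direct Gaussian computation at $z$ then yields $|q(z)|^2e^{-nQ(z)}/\|qe^{-nQ/2}\|_2^2\ge n\Delta Q(z)-C$.

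\textbf{Main obstacle.} The delicate point is extracting $O(1)$ precision rather than $o(n)$, both from the scaling limit (upper bound) and from the $\bar\partial$-construction (lower bound). Each must be shown to generate errors that are \emph{summable} in $n$, not merely vanishing. The bulk threshold $d(z,S^c)\ge Mn^{-1/2}\log n$ is precisely what allows the Gaussian factor $e^{-nQ}$ to dominate the $(\log n)$-scale support of $\bar\partial\varphi$ and the long-range tails of $|K_n(z,\cdot)|^2$: each of the competing errors is then bounded by a power $n^{-\gamma M}$ for some $\gamma>0$ depending on $Q$, so choosing $M$ large enough delivers the sharp $O(1)$ remainder.
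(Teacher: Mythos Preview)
The paper does not prove this lemma: it is quoted verbatim as a particular case of \cite[Theorem~4.2]{AmHeMa2} (with the remark that the stated threshold $n^{-1/2}\log^2 n$ there can be relaxed to $Mn^{-1/2}\log n$). So there is no ``paper's own proof'' to compare against beyond the citation. Your proposal is, in spirit, a reconstruction of the argument in \cite{AmHeMa,AmHeMa2}: the lower bound via a H\"ormander-type $\bar\partial$ peak-function is indeed the mechanism used there.

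There is, however, a genuine gap in your upper bound. You propose to compute $K_n(z,z)=\int_\C |K_n(z,w)|^2\,dA(w)$ and, on the microscopic core, replace $|K_n(z,w)|^2$ by $(n\Delta Q(z))^2|G(\Gamma(z),\Gamma(w))|^2$ up to a multiplicative $1+O(n^{-1/2})$. After the change of variables this yields $n\Delta Q(z)\bigl(1+O(n^{-1/2})\bigr)$, i.e.\ an additive error of order $\sqrt{n}$, not $O(1)$. The $O(n^{-1/2})$ precision you invoke is too coarse for the claim. The argument in \cite{AmHeMa2} avoids this amplification: one builds an \emph{approximate} kernel $K_n^\#(z,\cdot)$ (essentially your cut-off $e^{n\mathcal{Q}(z,\cdot)}$, normalized) satisfying $K_n^\#(z,z)=n\Delta Q(z)+O(1)$ by direct Gaussian computation, proves $\|K_n(z,\cdot)-K_n^\#(z,\cdot)\|_2=O(n^{-1/2})$ via the $\bar\partial$-correction, and then uses the reproducing property together with Cauchy--Schwarz to get
\[
|K_n(z,z)-K_n^\#(z,z)|\le \|K_n(z,\cdot)-K_n^\#(z,\cdot)\|_2\cdot K_n(z,z)^{1/2}=O(n^{-1/2})\cdot O(n^{1/2})=O(1).
\]
This last step is what converts $L^2$-precision $O(n^{-1/2})$ into diagonal precision $O(1)$; your integral-splitting route does not exploit it. A secondary issue: your truncation of the entire correction $q=\varphi-u$ to degree $\le n-1$ ``by Stirling'' is not straightforward, since you have no a priori control on the global growth of $u$. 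In \cite{AmHeMa2} the degree constraint is enforced by solving $\bar\partial$ in a weight with an extra logarithmic pole (or equivalently by working in a space where membership forces the polynomial degree), rather than by post-hoc truncation.
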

	
We mention that \cite[Theorem~4.2]{AmHeMa2} is stated for $d(z,S^c)\ge n^{-1/2}\log^2 n$, but the proof only uses $d(z,S^c)\ge M n^{-1/2}\log n$ for some sufficiently large $M>0$, depending only on $Q$. Moreover, \cite[Theorem~2.1]{MR3010273} allows to relax this condition to  $d(z,S^c)\ge M \sqrt{\log n/n}$ for a potential $Q$ that is $\mathcal{C}^\infty$ on $\{Q<\infty\}$.
We also point out that the argument can be extended to potentials $Q$ that are $\mathcal{C}^\infty$ near the droplet (see \cite[Appendix A.2]{AmHeMa2}), although we will not invoke this fact.

Finally, given a point configuration $\{z_j\}_{j=1}^n$ we define the weighted Lagrange polynomials for $1\le j \le n$ by
\begin{align}\label{eq_lp}
	\ell_j(z)= \prod_{k\neq j}\frac{z-z_k}{z_j-z_k} e^{-n(Q(z)-Q(z_j))/2}.
\end{align}
We will use the following variant of Lemma \ref{lemma_quote}, which follows by inspecting the proofs of \cite[Theorem~3]{Am} and \cite[Theorems~1.1 and 5.1]{amro20}; see also
\cite[Section 3, Remark]{amro20}.

\begin{lemma}\label{lemma_quote_2}
Assume that $Q$ satisfies Conditions \ref{c1} to \ref{c4} and $\beta_n\ge c \log n$ and fix a neighborhood $U$ of $S$ where \ref{cau} is satisfied.
Then there exist positive constants $A=A(c,Q)$ and $n_0=n_0(c,Q)$ such that for
any $k \in \mathbb{N}$ and every $n \geq n_0$ the following hold with probability $\ge 1-n^{-k}$:
\begin{itemize}
\item[(i)] The points $\{z_j\}_j$ are contained in $U$ and $A^{-k}/\sqrt{n}$-separated:
\begin{align}\label{sep2}
	\sqrt{n} \min_{1 \leq j \not= k \leq n} |z_j-z_k| \geq A^{-k}.
\end{align}
\item[(ii)] The Lagrange polynomials \eqref{eq_lp} are bounded as follows:
\begin{align}
	\label{elag}
	\max_{1\le j\le n}\|\ell_j\|_\infty\le A^k.
\end{align} 
\end{itemize}
\end{lemma}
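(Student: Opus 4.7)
The plan is to revisit the probabilistic inputs of \cite[Theorem~3]{Am} and \cite[Theorems~1.1 and~5.1]{amro20} that underlie Lemma~\ref{lemma_quote}, and to track how their failure probabilities depend on the inverse temperature $\beta_n$. Each of the relevant estimates is proved with failure probability of the schematic form $Ce^{-\gamma(s)\beta_n}$, where $s$ denotes the scale parameter (the separation distance in \eqref{sep2}, or the reciprocal of the Lagrange bound in \eqref{elag}) and $\gamma(s)\to\infty$ as $s$ degrades. Under $\beta_n\ge c\log n$ this becomes $Cn^{-c\gamma(s)}$, so choosing $s=A^{-k}$ with $A=A(c,Q)$ sufficiently large renders each individual failure probability at most $\tfrac13 n^{-k}$, and the claim follows from a union bound.

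Concretely, containment in $U$ is the localization theorem of \cite{Am}, used exactly as in the proof of \cite[Theorem~5.1]{amro20}: its failure probability is $\lesssim e^{-\varepsilon\beta_n}\le n^{-\varepsilon c}$, which after absorbing constants is $\le \tfrac13 n^{-k}$. The separation \eqref{sep2} is established in \cite[Theorem~5.1]{amro20} by integrating the Boltzmann density $e^{-\beta_n H_n}$ over the pair-collision region $\{|z_j-z_k|\le s/\sqrt{n}\}$; the logarithmic repulsion produces a bound decaying like a positive power of $s$ whose exponent is proportional to $\beta_n$, and setting $s=A^{-k}$ with $A$ large in terms of $c,Q$ drives this $\le \tfrac13 n^{-k}$.

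For the Lagrange bound \eqref{elag}, I would follow the argument of \cite[Theorem~1.1]{amro20}: combine the separation \eqref{sep2} with the interpolation inequality \eqref{eq_s2} applied at $\rho$ slightly greater than~$1$ (which holds with failure probability $\le \tfrac13 n^{-k}$ by the same exponential-in-$\beta_n$ control), and convert $L^2$ control into a pointwise bound via the reproducing-kernel inequality $|f(z)|^2\le K_n(z,z)\|f\|_2^2$ together with the on-diagonal estimate $K_n(z,z)\lesssim n$ valid near $S$ (Lemma~\ref{lpun}). Tracking how the implicit constants scale as the separation is taken to be $A^{-k}/\sqrt{n}$ yields $\|\ell_j\|_\infty\le A^{O(k)}$, which after renaming $A$ gives \eqref{elag}. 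The principal obstacle is purely bookkeeping: the three events must be realized simultaneously under a single $n^{-k}$ failure probability, and the base $A$ must depend only on $(c,Q)$, with $k$ entering solely through the explicit factors $A^{\pm k}$. This dependence is implicit in the cited works; the role of the present lemma is to make it quantitative.
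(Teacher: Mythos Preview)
Your proposal is essentially correct and matches the paper's approach: the paper gives no self-contained proof, stating only that the lemma ``follows by inspecting the proofs of \cite[Theorem~3]{Am} and \cite[Theorems~1.1 and 5.1]{amro20}; see also \cite[Section~3, Remark]{amro20},'' and your outline is a reasonable account of what that inspection entails.

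One caveat on part (ii): your proposed route to \eqref{elag} via the interpolation inequality \eqref{eq_s2} is not quite the mechanism used in \cite{amro20}. The interpolation inequality produces some $f\in\ww_{\rho n}$ with $\rho>1$ interpolating the data, whereas $\ell_j$ is the \emph{unique} interpolant in $\ww_n$, and there is no a priori reason the two have comparable norms. In \cite{amro20} the Lagrange bound is instead obtained by a direct energy (perturbation) argument: moving a single point $z_j\mapsto z$ changes $H_n$ by $-\log|\ell_j(z)|+\tfrac{n}{2}(Q(z)-Q(z_j))$, and integrating the Boltzmann weight over this perturbation yields a failure probability of the form $e^{-\gamma\beta_n}$ for the event $\{\|\ell_j\|_\infty>A^k\}$, exactly parallel to the separation argument. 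This is what the pointer to \cite[Section~3, Remark]{amro20} signals. The bookkeeping you describe (union bound, choosing $A=A(c,Q)$ so that $c\gamma(A^{-k})\ge k$) is otherwise accurate.
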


\subsection{Discrepancy }
	
Theorem~\ref{cdisbul} will be deduced from the following more general result for compact observation windows with regular boundary. Recall the notation for the equilibrium measure \eqref{eq_equil}.

\begin{theorem}\label{tdisbul2}
	Assume $Q\in \mathcal{C}^2$ and satisfies Conditions \ref{c1} to \ref{c4}, $\beta_n\ge c \log n$ and $\Omega\subseteq \C$ is a compact domain with regular boundary at scale $\eta>0$. Then there exist deterministic constants $M=M(Q)$, $n_0=n_0(c,Q)$ and $C=C(c,Q)$ such that for every $n\ge n_0$ and $k\in\N$, the following holds
	with $\pp_n^{\beta_n}$ probability $\ge 1- n^{-k}$: For every $p_n\in S$ with
	\begin{align}\label{hhh}
	d(p_n+\Omega/\sqrt{n},S^c)\ge M n^{-1/2}\log n
	\end{align}
	we have
	\begin{align}\label{eq_ggg}
		\left|\#\left[p_n+\Omega/\sqrt{n}\right]- n \, \mu(p_n+\Omega/\sqrt{n}) \right| \le C^k \cdot \frac{\hh(\partial\Omega)}{ \cu} \cdot \Big(1+\frac{1}{\eta}\Big),
	\end{align}
	where $\mu$ is the equilibrium measure associated with $Q$, cf. \eqref{eq_equil}.
	\end{theorem}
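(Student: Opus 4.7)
The plan is to compare the point count $\#[\Omega_n]$, where $\Omega_n := p_n + \Omega/\sqrt{n}$, to the trace of the concentration Toeplitz operator $T_n = P^{(n)} M_{\Omega_n} P^{(n)}$ on the space of weighted polynomials $\ww_n$, following Landau's method as adapted in \cite{LeCe}. The hypothesis \eqref{hhh} places $\Omega_n$ deep inside $S$, where Lemma~\ref{lpun} applies pointwise, yielding
\[
\bigl|\tr(T_n) - n\mu(\Omega_n)\bigr| = \Bigl|\int_{\Omega_n}(K_n(z,z) - n\Delta Q(z))\,dA(z)\Bigr| \le C|\Omega_n| = C|\Omega|/n,
\]
which is negligible, for $n$ large, compared to the target bound. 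It thus suffices to prove
\[
\bigl|\#[\Omega_n] - \tr(T_n)\bigr|\le C^k\,\frac{\hh(\partial\Omega)}{\cu}\Bigl(1+\frac{1}{\eta}\Bigr).
\]

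As a preparatory step I would estimate the Hilbert--Schmidt quantity
\[
\tr(T_n - T_n^2) = \int_{\Omega_n}\int_{\Omega_n^c} |K_n(z,w)|^2\,dA(z)dA(w).
\]
The main contribution comes from the exponential-decay regime $|z-w|\le d(z,S^c)$ of Lemma~\ref{efine}: for $z\in\Omega_n$, changing variables $u = \sqrt{n}(w-z)$ one gets
\[
\int_{\Omega_n^c}|K_n(z,w)|^2\,dA(w) \lesssim n\,e^{-\varepsilon\sqrt{n}\,d(z,\partial\Omega_n)},
\]
up to polynomial factors, and integrating in $z$ by the coarea formula together with Proposition~\ref{prop_ca} applied to $\partial\Omega_n$ (whose regularity scale is $\eta/\sqrt{n}$, with the same parameter $\cu$) gives
\[
\tr(T_n - T_n^2)\lesssim \frac{\hh(\partial\Omega)}{\cu}\Bigl(1 + \frac{1}{\eta}\Bigr).
\]
The other two regimes of Lemma~\ref{efine} ($|w|$ large or $|z-w|>d(z,S^c)$) yield lower-order terms thanks to $d(\Omega_n,S^c)\gtrsim n^{-1/2}\log n$.

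The main obstacle is to convert the Hilbert--Schmidt control into the desired bound on $|\#[\Omega_n] - \tr(T_n)|$. I would invoke Lemma~\ref{lemma_quote_2} with parameter $k$ to obtain, on an event of probability $\ge 1 - n^{-k}$, the separation \eqref{sep2} and the Lagrange bound $\max_j\|\ell_j\|_\infty\le A^k$. Setting $N(\alpha) := \#\{j:\lambda_j(T_n)>\alpha\}$, the goal is the spectral sandwich
\[
N(1-\alpha)\le \#[\Omega_n]\le N(\alpha), \qquad \alpha\asymp A^{-2k}.
\]
The upper bound is proved by testing the Rayleigh quotient of $T_n$ on the $\#[\Omega_n]$-dimensional subspace spanned by the reproducing kernels $\{K_n(\cdot,z_j):\,z_j\in\Omega_n\}$, which is of full dimension by separation; the off-diagonal decay of $K_n$ ensures that most of the $L^2$ mass of each such kernel concentrates in $\Omega_n$, forcing the quotient above $\alpha$. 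The lower bound is dual: on the orthogonal complement, parametrized via the Lagrange basis, the Rayleigh quotient of $I-T_n$ is bounded below by $\alpha$, with the factor $A^{-2k}$ arising from the pointwise bound $\|\ell_j\|_\infty\le A^k$ through a Cauchy--Schwarz step combined once more with Lemma~\ref{efine}. Once the sandwich is in place, the standard Landau manipulation based on $\sum\lambda_k(1-\lambda_k) = \tr(T_n - T_n^2)$ gives
\[
|\#[\Omega_n] - \tr(T_n)| \le \frac{C}{\alpha}\,\tr(T_n - T_n^2),
\]
and absorbing $\alpha^{-1}\asymp A^{2k}$ into $C^k$ concludes the argument. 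The delicate point, which is where we must depart from the Fekete treatment of \cite{LeCe}, is converting the $L^\infty$ Lagrange bound into the $L^2$ localization needed in the spectral sandwich; the quantitative off-diagonal decay of $K_n$ from Lemma~\ref{efine} is essential at that step.
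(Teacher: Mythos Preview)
Your route diverges from the paper's, and the divergence is exactly at the step you flag as delicate. The paper (following \cite{LeCe} closely) never sets up a spectral sandwich. It uses the Lagrange interpolation identity $K_n(z,w)=\sum_j\ell_j(z)K_n(z_j,w)$ together with $\ell_j(z_j)=1$ and the reproducing property to write $N_n-\tr(T_n)$ \emph{exactly} as
\[
\int_{\Omega_n^c}\sum_{z_j\in\Omega_n}\ell_j(w)K_n(z_j,w)\,dA(w)
-\int_{\Omega_n}\sum_{z_j\notin\Omega_n}\ell_j(w)K_n(z_j,w)\,dA(w),
\]
then bounds each piece by $|\ell_j|\le A^k$, replaces the point sums $\sum_j|K_n(z_j,\cdot)|$ by $\tfrac{n}{s^2}\int|K_n(\cdot,\cdot)|$ via the subaveraging estimate \eqref{lamro} over disjoint balls of radius $s/\sqrt{n}\ge A^{-k}/\sqrt{n}$, and finishes with Lemma~\ref{efine} and Corollary~\ref{coar} on the resulting cross-boundary integrals of $|K_n|$. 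No eigenvalue counting, no threshold $\alpha$.

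Your sandwich $N(1-\alpha)\le\#[\Omega_n]\le N(\alpha)$ for $T_n$ on $\ww_n$ is where the gap lies. Concentration of each individual kernel $K_n(\cdot,z_j)$ in $\Omega_n$ does not give a lower bound on the Rayleigh quotient for arbitrary linear combinations; that would require a Riesz-basis lower bound for $\{K_n(\cdot,z_j):z_j\in\Omega_n\}$, and with separation only $A^{-k}/\sqrt{n}$ the smallest Gram eigenvalue can be tiny (already $\asymp nA^{-2k}$ for a single close pair, worse for clusters). The dual bound via the Lagrange basis has the analogous problem: a naive Cauchy--Schwarz over $\sum_{z_j\notin\Omega_n}f(z_j)\ell_j$ picks up a factor $\sim n$ from the number of terms, which the $L^\infty$ bound $\|\ell_j\|_\infty\le A^k$ alone does not offset. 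The sandwich that \emph{is} available here (Lemma~\ref{lemma_quote}(iii)) is for $T_{1\pm\gamma,n}$ on $\ww_{(1\pm\gamma)n}$; the whole point of the \cite{LeCe,NiOl} argument adopted in the paper is to \emph{bypass} the auxiliary $\rho$ by the direct Lagrange-identity computation above. So your remark ``where we must depart from the Fekete treatment of \cite{LeCe}'' has it backwards: the paper \emph{is} the \cite{LeCe} treatment, and that treatment does not pass through a spectral sandwich. Your Hilbert--Schmidt estimate for $\tr(T_n-T_n^2)$ is correct and is morally the same off-diagonal computation the paper carries out on $|K_n|$, but without the sandwich it does not reach the discrepancy.
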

	\begin{proof}
Fix a compact domain $\Omega$ and write
\begin{align*}
		\Omega_n &=p_n+\tfrac{1}{\sqrt{n}}\Omega  & \text{and}&   &  N_n&=\#[\Omega_n].
\end{align*}
Since the left-hand side of \eqref{eq_ggg} is $\lesssim n$, we can assume that $n\geq \hh(\partial\Omega)$ -- cf. \eqref{eq_cubb}. In particular, from this assumption and the isoperimetric inequality,
\begin{align}\label{enbi}
	|\Omega_n|= \frac{|\Omega|}{n}\lesssim \frac{\hh(\partial\Omega)^2}{n} \lesssim \hh(\partial\Omega).
\end{align}
We invoke Lemma \ref{lemma_quote_2} and adopt its notation. We fix a sample of the Coulomb gas $\{z_j\}_1^n \subset U$ satisfying \eqref{sep2} and \eqref{elag}.  We also set
\begin{align*}
	s:= \min\Big\{1, \sqrt{n} \min_{1 \leq j \not= k \leq n} |z_j-z_k|\Big\},
\end{align*}
which satisfies $s \geq A^{-k}$ by \eqref{sep2}, assuming, as we may, that $A \geq 1$. Taking $n_0$ large, we can further assume that $B_{s/\sqrt{n}}(z_j)\subseteq U$ and
$\Omega_n \subset S$.

Consider the concentration operator $T_{n}:\ww_{ n}\to \ww_{ n}$ by
\[T_{n} f=P_{\ww_{n}}(\chi_{\Omega_n} f),\]
and estimate
\begin{align}\label{etri}
	|N_n -   n \, \mu(\Omega_n)|\le 
	|N_n -   \tr(T_n)|
	+|\tr(T_n) -   n \, \mu(\Omega_n)|.
\end{align}
Regarding the second term on the right-hand side, note that
\begin{align*}
	\tr(T_n) -   n \, \mu(\Omega_n)
	&= \int_{\Omega_n} K_n(z,z)  - n\Delta Q(z)  \,dA(z)
\end{align*}
By Lemma \ref{lpun}, \eqref{hhh} and increasing $n_0$ if necessary,
\[|K_n(z,z)  - n\Delta Q(z)|\lesssim 1, \qquad z\in \Omega_n, n\ge n_0.\]
So, by \eqref{enbi} and \eqref{eq_cubb},
\begin{align}\label{etra}
	|\tr(T_n) -   n \, \mu(\Omega_n)|\lesssim |\Omega_n|\lesssim \hh(\partial\Omega) \lesssim \frac{\hh(\partial\Omega)}{ \cu} \cdot \Big(1+\frac{1}{\eta}\Big).
\end{align}
	
It only remains to bound the first term in \eqref{etri}. We proceed as in the proof of \cite[Theorem~1]{LeCe} which was in turn inspired by \cite{NiOl}. We first note the identity
\[K_n(z,w)=\sum_{j=1}^n \ell_j(z) K_n(z_j,w), \qquad z,w\in \C,\]
where $\ell_j$'s are the weighted Lagrange polynomials given by \eqref{eq_lp},
and write
\begin{align*}
	N_n -   \tr(T_n)&=\sum_{z_j\in \Omega_n} \ell_j(z_j) -\int_{\Omega_n} K_n(w,w) dA(w)
	\\ &=\sum_{z_j\in \Omega_n} \int_\C \ell_j(w) K_n(z_j,w) dA(w) -\int_{\Omega_n} \sum_{j=1}^n \ell_j(w) K_n(z_j,w) dA(w)
	\\ &= \int_{\Omega_n^c} \sum_{z_j\in \Omega_n} \ell_j(w) K_n(z_j,w) dA(w) -\int_{\Omega_n} \sum_{z_j\in \Omega_n^c} \ell_j(w) K_n(z_j,w) dA(w).
\end{align*}
We shall use the following estimate for weighted polynomials:
\begin{align}
	\label{lamro}
	|f(z_0)|\lesssim 	\frac{n}{s^2} \int_{B_{s/\sqrt{n}}(z_0)} |f(z)| dA(z),
	\qquad 
	f\in \ww_n, \quad B_{s/\sqrt{n}}(z_0)\subseteq U;
\end{align}
which follows from the fact that $Q$ is smooth and bounded on $U$ \cite{SaTo}; see \cite[Lemma~2.1]{amro20} for a direct proof. 

We now invoke \eqref{sep2} and \eqref{elag}, and apply \eqref{lamro}
to the reproducing kernels, together with the fact that  $B_{s/\sqrt{n}}(z_j)\subseteq U$ are disjoint:
\begin{align}
	&|N_n -   \tr(T_n)|\notag
	\\
	&\qquad \lesssim A^k \int_{\Omega_n^c} \sum_{z_j\in \Omega_n} |K_n(z_j,w)| dA(w) + A^k\int_{\Omega_n} \sum_{z_j\in \Omega_n^c} |K_n(z_j,w)| dA(w)\notag
	\\
	&\qquad \lesssim \frac{A^k n}{s^2} 
		\Big(\int_{\Omega_n^c} \int_{\Omega_n+B_{s/\sqrt{n}}(0)} |K_n(z,w)| dA(z,w) 
		+\int_{\Omega_n} \int_{\Omega_n^c+B_{s/\sqrt{n}}(0)} |K_n(z,w)| dA(z,w)\Big)\notag
	\\ &\qquad \lesssim \label{econ}
			A^{3k} n
			\int_{\Omega_n} \int_{\Omega_n^c} |K_n(z,w)| dA(w,z)
			 + A^{3k} n \int_{\partial\Omega_n+B_{s/\sqrt{n}}(0)} \int_\C |K_n(z,w)| dA(w,z).
\end{align}
We treat each integral separately and start with the first one. 
Let $\varepsilon>0$ and $h\in L^1(\C)$ be as in Lemma~\ref{efine} --- and enlarge $n_0$ if needed to satisfy the conclusions. Since $d(\Omega_n,S^c)\ge n^{-1/2} M \log n $,
\begin{align}\label{eext}
	\begin{aligned}
	n \int_{\Omega_n} \int_{\Omega_n^c\cap \{w:|z-w|\ge d(z,S^c)\}} |K_n(z,w)|  dA(w,z)
	&\lesssim n^2 |\Omega_n|  e^{-\varepsilon M \log n } +  |\Omega_n| \|h\|_1
	\\ &= |\Omega_n| (e^{(2-\varepsilon M) \log n } +  \|h\|_1) \lesssim  \hh(\partial\Omega),
	\end{aligned}
\end{align}
where in the last step we choose $M>2 \varepsilon^{-1}$ and use \eqref{enbi}.
		
Regarding the region where $|z-w|\le d(z,S^c)$, using Lemma~\ref{efine} again we have
\begin{align*}
	n \int_{\Omega_n}  \int_{\Omega_n^c\cap \{w:|z-w|\le d(z,S^c)\}} |K_n(z,w)| dA(w)  dA(z)
	&\lesssim n^2 \int_{\Omega_n}\int_{\Omega_n^c} e^{-\varepsilon \sqrt{n}  |z-w|} dA(w)  dA(z)
	\\& = \int_{\Omega} \int_{\Omega^c}  e^{-\varepsilon  |z-w|} dA(w) dA(z)\lesssim \hh( \partial\Omega), \notag
\end{align*}
where the last step follows from an elementary estimate; see, e.g., \cite[Lemma~3.2]{AbGrRo}. Combining this with \eqref{eext} and using \eqref{eq_cubb}, we get
\begin{align}
	\label{e1}
	n\int_{\Omega_n} \int_{\Omega_n^c} |K_n(z,w)| dA(w)  dA(z)
	&\lesssim \hh( \partial\Omega)\lesssim \frac{\hh(\partial\Omega)}{ \cu} \cdot \Big(1+\frac{1}{\eta}\Big).
\end{align}
For the second integral in \eqref{econ}, we proceed similarly as before. Writing $E_{n,s}=\partial\Omega_n +B_{s/\sqrt{n}}(0)$ 
and since
$d(E_{n,s},S^c)\ge n^{-1/2} (M \log n -s) $, 
\begin{multline}
	\label{eext1}
	n \int_{E_{n,s}} \int_{\{w:|z-w|\ge d(z,S^c)\}} |K_n(z,w)|  dA(w) dA(z)
	\lesssim  |E_{n,s}| (n^2 e^{-\varepsilon M \log n } +  \|h\|_1)
	\\ = n^{-1}|\partial\Omega+B_{s}(0)| ( e^{(2-\varepsilon M) \log n } +  \|h\|_1) \lesssim \frac{\hh(\partial\Omega)}{ \cu}\Big(1+\frac{1}{\eta}\Big), 
\end{multline}
where in the last step we used Corollary~\ref{coar} and that $s\le 1$.
		
Regarding the region where $|z-w|\le d(z,S^c)$,
\begin{align*}
	n \int_{E_{n,s}}  \int_{\{w:|z-w|\le d(z,S^c)\}} |K_n(z,w)| dA(w)  dA(z)
	&\lesssim n^2 \int_{E_{n,s}}\int_{\C} e^{-\varepsilon \sqrt{n}  |z-w|} dA(w)  dA(z)
	\\& = \int_{\partial\Omega+B_{s}(0)} \int_{\C}  e^{-\varepsilon  |z-w|} dA(w) dA(z) \notag
	\\ &\lesssim |\partial\Omega+B_{s}(0)|\lesssim \frac{\hh(\partial\Omega)}{ \cu}\Big(1+\frac{1}{\eta}\Big), \notag
\end{align*}
where again the last step follows from Corollary~\ref{coar}. 
			
Combining this with \eqref{eext1}, we conclude that
\begin{align*}
	n\int_{E_{n,s}} \int_{\C} |K_n(z,w)| dA(w)  dA(z)
	&\lesssim \frac{\hh(\partial\Omega)}{ \cu}\Big(1+\frac{1}{\eta}\Big),
\end{align*}
which together with \eqref{econ} and \eqref{e1}, gives
\begin{align*}
	|N_n -   \tr(T_n)| 
	& \lesssim A^{3k} \frac{\hh(\partial\Omega)}{ \cu} \Big(1+\frac{1}{\eta}\Big).
\end{align*}
Finally, the previous estimate, together with
\eqref{etri} and \eqref{etra} yield \eqref{eq_ggg}.
\end{proof}
	
\begin{proof}[Proof of Theorem~\ref{cdisbul}]
Since $\Omega$ has a connected boundary, by Lemma~\ref{rrb}, $\Omega$ has regular boundary at scale
\begin{align*}
\eta=\hh(\partial\Omega),
\end{align*}
and
\begin{align}\label{eq2}
1\le \cu\le 2.
\end{align} 
By the isoperimetric inequality,
\begin{align}\label{eq1}
\eta=\hh(\partial\Omega) \geq 2 \sqrt{\pi |\Omega|} \geq 1.
\end{align}
Let $p \in \mathbb{C}$ satisfy $d(p,S^c)\ge M n^{-1/2}\log n$ with a constant $M$ to be specified. Since $\Omega$ is bounded,
\begin{align}\label{eq_xyz}
d(p+\Omega/\sqrt{n},S^c)\ge \frac{M}{2} n^{-1/2}\log n
\end{align}
for sufficiently large $n$. We invoke Theorem~\ref{tdisbul2} with $k=2$, and let $M$ be twice the corresponding constant in the statement of Theorem~\ref{tdisbul2}. By \eqref{eq2} and \eqref{eq1}, the factor $(1+1/\eta)/\cu$ in \eqref{eq_ggg} is $\lesssim 1$. We conclude that for all sufficiently large $n$,
\begin{align*}
\pp_n^{\beta_n}\Big(\Big\{\sup_{\substack{p\in S,\\ d(p,S^c)\ge M n^{-1/2}\log n}}\left|\#\left[p+\Omega/\sqrt{n}\right]- n \, \mu(p+\Omega/\sqrt{n}) \right|  > C \hh(\partial\Omega)\Big\}\Big)\le n^{-2}.
\end{align*}
By the Borell-Cantelli lemma, almost surely,
\[\limsup_{n\to\infty} \sup_{\substack{p\in S,\\ d(p,S^c)\ge M n^{-1/2}\log n}}\left|\#\left[p+\Omega/\sqrt{n}\right]- n \, \mu(p+\Omega/\sqrt{n}) \right|  \le C \hh(\partial\Omega). \]
Finally, \eqref{eq_disbul} follows by noting that, uniformly for $p\in S$,
\begin{align*}
	\left|n \, \mu(p+\Omega/\sqrt{n}) - \Delta Q (p)\frac{|\Omega|}{\pi}\right|&\le n\int_{p+\Omega/\sqrt{n}} |\Delta Q (z)- \Delta Q (p)| dA(z)
	\\
	&\lesssim n \int_{p+\Omega/\sqrt{n}} |z-p| dA(z) 
	\\ &\lesssim |\Omega| \frac{\text{diam}(\Omega)}{\sqrt n} \xrightarrow{n\to\infty} 0,
\end{align*}
where the uniform Lipschitz estimate $|\Delta Q(\cdot) -\Delta Q(p)|\lesssim |\cdot-p|$ on $p + \Omega/\sqrt{n}$ follows from the fact $p + \Omega/\sqrt{n} \subset S$ for all sufficiently large $n$, cf. \eqref{eq_xyz}, and $Q$ is real-analytic on a neighborhood of $S$.
\end{proof}


\begin{thebibliography}{10}
	
	\bibitem{AbGrRo}
	L.~D. Abreu, K.~Gr{\"o}chenig, and J.~L. Romero.
	\newblock On accumulated spectrograms.
	\newblock {\em Trans. Amer. Math. Soc.}, 368(5):3629--3649, 2016.
	
	\bibitem{ACV}
	G.~Akemann, M.~Cikovic, and M.~Venker.
	\newblock Universality at weak and strong non-{H}ermiticity beyond the elliptic
	{G}inibre ensemble.
	\newblock {\em Comm. Math. Phys.}, 362(3):1111--1141, 2018.
	
	\bibitem{MR3010273}
	Y.~Ameur.
	\newblock Near-boundary asymptotics for correlation kernels.
	\newblock {\em J. Geom. Anal.}, 23(1):73--95, 2013.
	
	\bibitem{amcmp}
	Y.~Ameur.
	\newblock Repulsion in low temperature {$\beta$}-ensembles.
	\newblock {\em Comm. Math. Phys.}, 359(3):1079--1089, 2018.
	
	\bibitem{Am}
	Y.~Ameur.
	\newblock A localization theorem for the planar {C}oulomb gas in an external
	field.
	\newblock {\em Electron. J. Probab.}, 26:Paper No. 46, 21, 2021.
	
	\bibitem{AmBy}
	Y.~Ameur and S.-S. Byun.
	\newblock Almost-hermitian random matrices and bandlimited point processes.
	\newblock {\em arXiv preprint:2101.03832}.
	
	\bibitem{AmChCr22}
	Y.~Ameur, C.~Charlier, and J.~Cronvall.
	\newblock The two-dimensional {C}oulomb gas: fluctuations through a spectral
	gap.
	\newblock {\em arXiv preprint:2210.13959}.
	
	\bibitem{AmCr}
	Y.~Ameur and J.~Cronvall.
	\newblock Szeg\"o type asymptotics for the reproducing kernel in spaces of
	full-plane weighted polynomials.
	\newblock {\em Commun. Math. Phys.}, 2022.
	
	\bibitem{AmHeMa}
	Y.~Ameur, H.~Hedenmalm, and N.~Makarov.
	\newblock Berezin transform in polynomial {B}ergman spaces.
	\newblock {\em Comm. Pure Appl. Math.}, 63(12):1533--1584, 2010.
	
	\bibitem{AmHeMa2}
	Y.~Ameur, H.~Hedenmalm, and N.~Makarov.
	\newblock Random normal matrices and {W}ard identities.
	\newblock {\em Ann. Probab.}, 43(3):1157--1201, 2015.
	
	\bibitem{AmKaMa}
	Y.~Ameur, N.-G. Kang, and N.~Makarov.
	\newblock Rescaling {W}ard identities in the random normal matrix model.
	\newblock {\em Constr. Approx.}, 50(1):63--127, 2019.
	
	\bibitem{AmKaMaWe}
	Y.~Ameur, N.-G. Kang, N.~Makarov, and A.~Wennman.
	\newblock Scaling limits of random normal matrix processes at singular boundary
	points.
	\newblock {\em J. Funct. Anal.}, 278(3):108340, 46, 2020.
	
	\bibitem{ammaro}
	Y.~Ameur, F.~Marceca, and J.~L. Romero.
	\newblock Gaussian beta ensembles: the perfect freezing transition and its
	characterization in terms of {B}eurling-{L}andau densities.
	\newblock {\em arXiv preprint:2205.15054}.
	
	\bibitem{AmOC}
	Y.~Ameur and J.~Ortega-Cerd\`a.
	\newblock Beurling-{L}andau densities of weighted {F}ekete sets and correlation
	kernel estimates.
	\newblock {\em J. Funct. Anal.}, 263(7):1825--1861, 2012.
	
	\bibitem{amro20}
	Y.~Ameur and J.~L. Romero.
	\newblock The planar low temperature {C}oulomb gas: separation and
	equidistribution.
	\newblock {\em Rev. Mat. Iberoam.}, 39(2):611--648, 2023.
	
	\bibitem{ArSe}
	S.~Armstrong and S.~Serfaty.
	\newblock Local laws and rigidity for {C}oulomb gases at any temperature.
	\newblock {\em Ann. Probab.}, 49(1):46--121, 2021.
	\newblock Corrections in ArXiv:1906.09848v3.
	
	\bibitem{MR157250}
	V.~Bargmann.
	\newblock On a {H}ilbert space of analytic functions and an associated integral
	transform.
	\newblock {\em Comm. Pure Appl. Math.}, 14:187--214, 1961.
	
	\bibitem{MR2430724}
	R.~Berman, B.~Berndtsson, and J.~Sj\"{o}strand.
	\newblock A direct approach to {B}ergman kernel asymptotics for positive line
	bundles.
	\newblock {\em Ark. Mat.}, 46(2):197--217, 2008.
	
	\bibitem{BoJaKa}
	A.~Bonami, P.~Jaming, and A.~Karoui.
	\newblock Non-asymptotic behavior of the spectrum of the sinc-kernel operator
	and related applications.
	\newblock {\em J. Math. Phys.}, 62(3):Paper No. 033511, 20, 2021.
	
	\bibitem{byun2022progress}
	S.-S. Byun and P.~J. Forrester.
	\newblock Progress on the study of the {G}inibre ensembles {I}: Gin{UE}.
	\newblock {\em arXiv preprint:2211.16223}.
	
	\bibitem{BY}
	S.-S. Byun and M.~Yang.
	\newblock Determinantal {C}oulomb gas ensembles with a class of discrete
	rotational symmetric potentials.
	\newblock {\em SIAM J. Math. Anal.}, 55(6):6867--6897, 2023.
	
	\bibitem{Ca}
	D.~G. Caraballo.
	\newblock Areas of level sets of distance functions induced by asymmetric
	norms.
	\newblock {\em Pacific J. Math.}, 218(1):37--52, 2005.
	
	\bibitem{MR3831027}
	T.~Carroll, J.~Marzo, X.~Massaneda, and J.~Ortega-Cerd\`a.
	\newblock Equidistribution and {$\beta$}-ensembles.
	\newblock {\em Ann. Fac. Sci. Toulouse Math. (6)}, 27(2):377--387, 2018.
	
	\bibitem{da88}
	I.~Daubechies.
	\newblock Time-frequency localization operators: a geometric phase space
	approach.
	\newblock {\em IEEE Trans. Inform. Theory}, 34(4):605--612, 1988.
	
	\bibitem{Fo}
	P.~J. Forrester.
	\newblock {\em Log-gases and random matrices}, volume~34 of {\em London
		Mathematical Society Monographs Series}.
	\newblock Princeton University Press, Princeton, NJ, 2010.
	
	\bibitem{MR679313}
	A.~Friedman.
	\newblock {\em Variational principles and free-boundary problems}.
	\newblock A Wiley-Interscience Publication. John Wiley \& Sons, Inc., New York,
	1982.
	
	\bibitem{FKS}
	Y.~V. Fyodorov, B.~A. Khoruzhenko, and H.-J. Sommers.
	\newblock Almost {H}ermitian random matrices: crossover from {W}igner-{D}yson
	to {G}inibre eigenvalue statistics.
	\newblock {\em Phys. Rev. Lett.}, 79(4):557--560, 1997.
	
	\bibitem{hardy1915expression}
	G.~H. Hardy.
	\newblock On the expression of a number as the sum of two squares.
	\newblock {\em Quart. J. Math.}, 46:263--283, 1915.
	
	\bibitem{HeMa}
	H.~Hedenmalm and N.~Makarov.
	\newblock Coulomb gas ensembles and {L}aplacian growth.
	\newblock {\em Proc. Lond. Math. Soc. (3)}, 106(4):859--907, 2013.
	
	\bibitem{HeSh}
	H.~Hedenmalm and S.~Shimorin.
	\newblock Hele-{S}haw flow on hyperbolic surfaces.
	\newblock {\em J. Math. Pures Appl. (9)}, 81(3):187--222, 2002.
	
	\bibitem{HeWe}
	H.~Hedenmalm and A.~Wennman.
	\newblock Off-spectral analysis of {B}ergman kernels.
	\newblock {\em Comm. Math. Phys.}, 373(3):1049--1083, 2020.
	
	\bibitem{MR4366416}
	H.~Hedenmalm and A.~Wennman.
	\newblock Planar orthogonal polynomials and boundary universality in the random
	normal matrix model.
	\newblock {\em Acta Math.}, 227(2):309--406, 2021.
	
	\bibitem{HoVa}
	D.~Holcomb and B.~Valk\'{o}.
	\newblock Overcrowding asymptotics for the {$\rm sine_\beta$} process.
	\newblock {\em Ann. Inst. Henri Poincar\'{e} Probab. Stat.}, 53(3):1181--1195,
	2017.
	
	\bibitem{Ism}
	M.~E.~H. Ismail.
	\newblock An electrostatics model for zeros of general orthogonal polynomials.
	\newblock {\em Pacific J. Math.}, 193(2):355--369, 2000.
	
	\bibitem{Jo}
	K.~Johansson.
	\newblock On fluctuations of eigenvalues of random {H}ermitian matrices.
	\newblock {\em Duke Math. J.}, 91(1):151--204, 1998.
	
	\bibitem{KaRoDa}
	S.~Karnik, J.~Romberg, and M.~A. Davenport.
	\newblock Improved bounds for the eigenvalues of prolate spheroidal wave
	functions and discrete prolate spheroidal sequences.
	\newblock {\em Appl. Comput. Harmon. Anal.}, 55:97--128, 2021.
	
	\bibitem{la67}
	H.~J. Landau.
	\newblock Necessary density conditions for sampling and interpolation of
	certain entire functions.
	\newblock {\em Acta Math.}, 117:37--52, 1967.
	
	\bibitem{MR593228}
	H.~J. Landau and H.~Widom.
	\newblock Eigenvalue distribution of time and frequency limiting.
	\newblock {\em J. Math. Anal. Appl.}, 77(2):469--481, 1980.
	
	\bibitem{LeRi}
	M.~Ledoux and B.~Rider.
	\newblock Small deviations for beta ensembles.
	\newblock {\em Electron. J. Probab.}, 15:no. 41, 1319--1343, 2010.
	
	\bibitem{MR3454377}
	S.-Y. Lee and N.~G. Makarov.
	\newblock Topology of quadrature domains.
	\newblock {\em J. Amer. Math. Soc.}, 29(2):333--369, 2016.
	
	\bibitem{LeCe}
	N.~Lev and J.~Ortega-Cerd\`a.
	\newblock Equidistribution estimates for {F}ekete points on complex manifolds.
	\newblock {\em J. Eur. Math. Soc. (JEMS)}, 18(2):425--464, 2016.
	
	\bibitem{MR}
	F.~Marceca and J.~L. Romero.
	\newblock Spectral deviation of concentration operators for the short-time
	{F}ourier transform.
	\newblock {\em Studia Math.}, 270(2):145--173, 2023.
	
	\bibitem{Meh}
	M.~L. Mehta.
	\newblock {\em Random matrices}, volume 142 of {\em Pure and Applied
		Mathematics (Amsterdam)}.
	\newblock Elsevier/Academic Press, Amsterdam, third edition, 2004.
	
	\bibitem{NiOl}
	S.~Nitzan and A.~Olevskii.
	\newblock Revisiting {L}andau's density theorems for {P}aley-{W}iener spaces.
	\newblock {\em C. R. Math. Acad. Sci. Paris}, 350(9-10):509--512, 2012.
	
	\bibitem{MR3373044}
	S.~R. Nodari and S.~Serfaty.
	\newblock Renormalized energy equidistribution and local charge balance in 2{D}
	{C}oulomb systems.
	\newblock {\em Int. Math. Res. Not. IMRN}, (11):3035--3093, 2015.
	
	\bibitem{MR1606719}
	D.~Petz and F.~Hiai.
	\newblock Logarithmic energy as an entropy functional.
	\newblock In {\em Advances in differential equations and mathematical physics
		({A}tlanta, {GA}, 1997)}, volume 217 of {\em Contemp. Math.}, pages 205--221.
	Amer. Math. Soc., Providence, RI, 1998.
	
	\bibitem{SaTo}
	E.~B. Saff and V.~Totik.
	\newblock {\em Logarithmic potentials with external fields}, volume 316 of {\em
		Grundlehren der mathematischen Wissenschaften [Fundamental Principles of
		Mathematical Sciences]}.
	\newblock Springer-Verlag, Berlin, 1997.
	\newblock Appendix B by Thomas Bloom.
	
	\bibitem{MR1097025}
	M.~Sakai.
	\newblock Regularity of a boundary having a {S}chwarz function.
	\newblock {\em Acta Math.}, 166(3-4):263--297, 1991.
	
	\bibitem{Th}
	E.~Thoma.
	\newblock Overcrowding and separation estimates for the {C}oulomb gas.
	\newblock {\em arXiv preprint:2210.05902v2}, 2022.
	
	\bibitem{VaVi}
	B.~Valk\'{o} and B.~Vir\'{a}g.
	\newblock Continuum limits of random matrices and the {B}rownian carousel.
	\newblock {\em Invent. Math.}, 177(3):463--508, 2009.
	
\end{thebibliography}
\end{document}